\providecommand*{\twoheadrightarrowfill@}{%
	\arrowfill@\relbar\relbar\twoheadrightarrow
}
\providecommand*{\twoheadleftarrowfill@}{%
	\arrowfill@\twoheadleftarrow\relbar\relbar
}
\providecommand*{\xtwoheadrightarrow}[2][]{%
	\ext@arrow 0579\twoheadrightarrowfill@{#1}{#2}%
}
\providecommand*{\xtwoheadleftarrow}[2][]{%
	\ext@arrow 5097\twoheadleftarrowfill@{#1}{#2}%
}
\DeclareMathOperator{\tp}{{tp}}
\DeclareMathOperator{\cl}{{cl}}
\DeclareMathOperator{\id}{{id}}
\DeclareMathOperator{\aut}{{Aut}}
\DeclareMathOperator{\dcl}{{dcl}}
\DeclareMathOperator{\age}{{Age}}
\DeclareMathOperator{\im}{{Im}}
\newtheorem{thm}{Theorem}[section]
\newtheorem{lem}[thm]{Lemma}
\newtheorem{fct}[thm]{Fact}
\newtheorem{prop}[thm]{Proposition}
\theoremstyle{remark}
\newtheorem{rem}[thm]{Remark}
\theoremstyle{definition}
\newtheorem{dfn}[thm]{Definition}
\newtheorem{clm*}{Claim}
\newcounter{claimcounter}[thm]
\newenvironment{clm}{\stepcounter{claimcounter}{\noindent {\textbf{Claim}} \theclaimcounter:}}{}
\title[On the topological dynamics of automorphism groups]{On the topological dynamics of automorphism groups; a model-theoretic perspective}
\author{Krzysztof Krupi\'nski}
\email[K.\ Krupi\'{n}ski]{kkrup@math.uni.wroc.pl}
\address[K.\ Krupi\'nski]{
	Instytut Matematyczny, Uniwersytet Wrocławski\\
	pl. Grunwaldzki 2/4\\
	50-384 Wrocław, Poland \hspace{5mm}
ORCID: \href{https://orcid.org/0000-0002-2243-4411}{0000-0002-2243-4411}}
\thanks{The first author is supported by National Science Center, Poland, grants 2015/19/B/ST1/01151, 2016/22/E/ST1/00450, and 2018/31/B/ST1/00357}
\author{Anand Pillay}
\email[A.\ Pillay]{apillay@nd.edu}
\address[A.\ Pillay]{Department of Mathematics, University of Notre Dame\\
	255 Hurley Hall\\
	Notre Dame, IN 46556, USA}
\thanks{The second author was supported by NSF grants DMS-1360702,  DMS-1665035, DMS-1760413, and DMS-2054271}
\keywords{Group of automorphisms, universal ambit, universal minimal flow, extreme amenability, amenability}
\subjclass[2010]{03C98, 54H20, 05D10}
\date{}
\begin{document}
\maketitle

\begin{abstract}
We give a model-theoretic treatment of the fundamental results of
Kechris-Pestov-Todor\v{c}evi\'{c} theory in the more general context of
automorphism groups of not necessarily countable structures.  One of the main
points is a description of the universal ambit as a certain space of types in an
expanded language.  Using this, we recover results of Kechris-Pestov-Todor\v{c}evi\'{c}
\cite{KPT}, Moore \cite{Mo}, Ngyuen Van Th\'{e} \cite{The},  in the context of automorphism
groups of not necessarily countable structures, as well as Zucker \cite{Zu}. 
\end{abstract}

\section{Introduction}
The idea of studying interactions between dynamical properties of the automorphism group of a Fra\"{i}ss\'e structure and combinatorial properties of the underlying Fra\"{i}ss\'{e} class developed in \cite{KPT}  started a whole new research area which joins techniques from topological dynamics, structural Ramsey theory, and descriptive set theory. The main results of \cite{KPT} are the following.
\begin{itemize}
\item  The automorphism group of a locally finite  Fra\"{i}ss\'{e} structure $F$ is extremely amenable if and only if the underlying  Fra\"{i}ss\'{e} class has the embedding Ramsey property (using the terminology from \cite{Zu}; see \cite[Theorem 5.1]{Zu}). 
\item Let $L_0$ and $L = L_0 \cup \{<\}$ be two languages, $F_0$ be a locally finite Fra\"{i}ss\'{e} structure in $L_0$, and let $F$ be its Fra\"{i}ss\'{e} order expansion to $L$ (meaning that the interpretation of $<$ in $F$ is a linear ordering). Then if $\mbox{Age}(F)$ has the Ramsey property and the so-called ordering property with respect to $\mbox{Age}(F_0)$, then the universal minimal $\aut(F_0)$-flow is the closure $\cl(\aut(F_0) \cdot <)$ in the space of linear orderings on $F$ with the natural left action of $\aut(F_0)$. (See \cite[Theorem 7.5]{KPT}.)
\end{itemize}

Later, the second result was generalized in \cite{The} to so-called precompact expansions of $F_0$ by a possibly infinite number of relation symbols (see Fact \ref{fact: universal minimal flow}).

In this paper, we give a natural account for these (and some other important) results using model-theoretic objects. The key point is our description of the right [left] universal ambit for the group $\aut(M)$ of automorphisms of any structure $M$ as a space of types in a very rich (full) language. To get this description, we use the well-known model-theoretic description of the universal ambit of a topological group from \cite{GPP} (see also \cite{KrPi}). With our approach, it is natural to work more generally with arbitrary structures instead of Fra\"{i}ss\'{e} structures. This naturally yields a generalization of the previous approach. However, since every structure can be canonically expanded to an ultrahomogeneous one (see Subsection \ref{subsection: model theory}), our approach yields in fact a generalization from the context of  Fra\"{i}ss\'{e} structures to possibly uncountable  ultrahomogeneous structures. Some generalizations to an uncountable context have already been obtained, e.g. in \cite{Ba} and \cite{Sa}.  

Our point of view is not based on just trying to translate the existing papers into model theory and translate existing proofs, but to find definitions and proofs natural from the model-theoretic perspective. It would not be surprising that there are parallels between our proofs and those in the descriptive set theory literature. Although our results are not far from the known results,  our paper 
was a starting point and provided some foundational material for paper \cite{KLM} on ``definable'' versions of various notions and connections between Ramsey theory and topological dynamics for first order theories. 
We also hope that our paper will make the whole subject more natural and easier to understand to a wider model theory society. 

All of this belongs to our general project of studying interactions between model theory and the dynamical properties of groups of automorphisms. 
As mentioned above, \cite{KLM} (which was written after this paper) studies Ramsey properties and degrees in a first order setting (working with ``definable'' colorings); but here we focus on classical Ramsey theory (with all possible colorings allowed), and we mostly recover some known results. Independently, also Ehud Hrushovski studied some first order version of Ramsey theory in \cite{Hr} (which was also written after our paper).

In Section \ref{section: preliminaries}, we recall the relevant definitions and facts from model theory, topological dynamics, and Ramsey theory. 
In Section \ref{section: universal ambit}, we give our description of the universal ambit of the group of automorphism of any structure as a space of types, and, using it, we recover Zucker's presentation from \cite{Zu}  of the universal ambit as a certain inverse limit: model-theoretically this becomes absolutely natural, as it follows from the presentation of the type space in infinitely many variables as the inverse limit of the restrictions to the finite tuples of variables. In Section \ref{section: extreme amenability}, we reprove the first main theorem from \cite{KPT} recalled above, and in Section \ref{section: amenability} --- an analogous result from \cite{Mo} characterizing amenability of the automorphism group via the so-called convex Ramsey property (everything done in a more general context of arbitrary structures). In Section \ref{section: universal minimal flow}, we reprove the aforementioned result from \cite{The} (also in a more general context) yielding a description of the universal minimal flow. In Section \ref{section: metrizability}, we reprove \cite[Theorem 8.7]{Zu} saying that metrizability of the universal minimal $\aut(M)$-flow is equivalent to $\age(M)$ having finite embedding Ramsey degree, where $M$ is a Fra\"{i}ss\'{e} structure.

\section{Preliminaries}\label{section: preliminaries}

We present here the necessary notions and facts from model theory, topological dynamics, and Ramsey theory.

\subsection{Model theory}\label{subsection: model theory}

A first order structure will usually be denoted by $M$. We say that $M$ is {\em $\kappa$-saturated} if every type over a subset $A$ of $M$ of cardinality smaller than $\kappa$ is realized in $M$; it is {\em strongly $\kappa$-homogeneous} if every elementary map between subsets of $M$ of cardinality smaller than $\kappa$ extends to an automorphism of $M$. Equivalently, strong $\kappa$-homogeneity means that any tuples $\bar a \equiv \bar b$ in $M$ of length less than $\kappa$ lie in the same orbit under $\aut(M)$. A {\em monster model} of a given complete theory $T$ is a $\kappa$-saturated and strongly $\kappa$-homogeneous model for a sufficiently large cardinal $\kappa$ (usually one assumes that $\kappa$ is a strong limit cardinal greater than $|T|$); it is well-know that a monster model always exists. 

An {\em ultrahomogeneous structure} is a structure $M$ in which every isomorphism between any finitely generated substructures extends to an isomorphism of $M$; if the language is relational, then finitely generated substructures are just finite substructures. Equivalently, ultrahomogeneity means that any finite tuples in $M$ with the same quantifier-free type lie in the same orbit under $\aut(M)$. Note that each ultrahomogeneous structure is strongly $\aleph_0$-homogeneous. A {\em Fra\"{i}ss\'{e} structure} is a countable ultrahomogeneous structure.  It is well-know that that the {\em age} of a Fra\"{i}ss\'{e} structure $M$ (i.e. the class $\mbox{Age}(M)$ of all finitely generated structures in the given language which can be embedded into $M$) is a {\em Fra\"{i}ss\'{e} class}, i.e. is non-empty and satisfies: Hereditary Property (HP), Joint embedding Property (JEP), Amalgamation Property (AP), and Denumerability (see \cite[Section 2]{KPT}). Fra\"{i}ss\'{e}'s theorem says that the converse is true: every Fra\"{i}ss\'{e} class has a unique (up to isomorphism) {\em Fra\"{i}ss\'{e} limit}, i.e. a Fra\"{i}ss\'{e} structure whose age is exactly the Fra\"{i}ss\'{e} class in question.

If $M$ is an arbitrary structure, one can always consider its canonical ultrahomogeneous expansion by adding predicates for all the $\aut(M)$-orbits on all finite Cartesian powers of $M$. The automorphism group of this expansion is the same as the original one. In this paper, often one can pass to this expansion without loss of generality, which in the case of countable $M$ means that we can assume that $M$ is a Fra\"{i}ss\'{e} structure.

\subsection{Topological dynamics}

In this paper, compact spaces are Hausdorff by definition.
Let $G$ be a topological group. Recall that a left [right] {\em $G$-flow} is a pair $(G,X)$ where $X$ is a non-empty, compact space on which $G$ acts on the left [resp. on the right] continuously. A {\em $G$-ambit} is a flow $(G,X,x_0)$ with a distinguished point $x_0 \in X$ whose $G$-orbit is dense in $X$. It is well-known that for any topological group $G$ there exists a universal $G$-ambit, i.e. a $G$-ambit which maps homomorphically to any other $G$-ambit; a universal $G$-ambit is clearly unique up to isomorphism, so we can say {\em the universal $G$-ambit}. The existence of the universal $G$-ambit is also easy: up to isomorphism there are at most $\beth_3({|G|})$ $G$-ambits, so we can find a set $\mathcal{A}$ of $G$-ambits which consists of representatives of all the isomorphism classes; then the product of all $G$-ambits from $\mathcal{A}$, with the distinguished point being the net consisting of the distinguished points in the ambits from $\mathcal{A}$, is the universal $G$-ambit. This universal $G$-ambit can also be described as the Samuel compactification of $G$, and there is a well-known construction of this object in general topology (e.g. see \cite[Section 2]{Us}).  In the next subsection, we recall the model-theoretic presentation of the universal $G$-ambit which we will use in this paper.

A {\em subflow} of a given flow $(G,X)$ is a flow of the form $(G,Y)$ for a closed, $G$-invariant subset $Y$ of $X$, where the action of $G$ on $Y$ is the restriction of the action of $G$ on $X$. A {\em minimal flow} is a flow which does not have proper subflows. A {\em universal minimal $G$-flow} is  a minimal $G$-flow which maps homomorphically to any minimal $G$-flow. By Zorn's lemma, each flow has a minimal subflow. It is clear that that any minimal subflow of the universal $G$-ambit is a universal minimal $G$-flow. It turns out that a universal minimal $G$-flow is also unique up to isomorphism, which is less obvious (see \cite[Chapter 8, Theorem 1]{Au}). An important goal of topological dynamics is to understand the universal minimal $G$-flow for a given group $G$. 

Recall that a topological group $G$ is said to be {\em extremely amenable} if in every left [equivalently right] $G$-flow there is a fixed point. Equivalently, this holds for the universal left [right] $G$-ambit. A topological group $G$ is said to be {\em amenable} if on every left [equivalently right] $G$-flow there is a $G$-invariant, Borel probability measure. Equivalently, this holds for the universal left [right] $G$-ambit.

\subsection{Model-theoretic description of the universal $G$-ambit}\label{subsection: universal G-ambit}

The description of the universal left $G$-ambit given below comes from \cite{GPP}. It can also be found in \cite[Fact 2.11]{KrPi}. In fact, this description is a
model-theoretic interpretation of the Samuel compactification, where ``model-theoretic'' refers to passing to a ``nonstandard model'' or
``elementary extension of the ground model''.

Let $G$ be a topological group. 
Treat it as a first order structure $M$ in any language $L$ in which we have a function symbol interpreted as the group law and for every open subset $U$ of $G$ we have a unary relation symbol (also denoted by $U$) interpreted as $U$. More generally, it is enough to work in any structure $M$ in which $G$ is a $\emptyset$-definable group and all open subsets of $G$ are $\emptyset$-definable.
Let $M^* \succ M$ be a monster model, $G^*:=G(M^*)$, and $U^*:=U(M^*)$. The group $\mu$ of {\em infinitesimals} is defined as $\bigcap \{U^*: U\;\, \mbox{an open neighborhood of the neutral element of}\;\, G\}$. Define a relation $\sim$ on $G^*$ by
$$a \sim b  \iff ab^{-1} \in \mu.$$ 
Finally, define $E_\mu$ on $G^*$ by 
$$E_\mu := \; \sim \circ \equiv_M \;=\; \equiv_M \circ \sim,$$
where $\equiv_M$ is the relation of having the same type over $M$.
Then $E_\mu$ is the finest bounded, $M$-type-definable equivalence relation on $G^*$ coarsening $\sim$. Moreover, $\mu$ is normalized by $G$, so 
$$g\cdot(a/E_\mu): = (ga)/E_\mu$$
is a well-defined action of $G$ on $G^*/E_\mu$, and it turns out (see \cite[Fact 2.11]{KrPi}) that $(G,G^*/E_\mu, e/E_\mu)$ is exactly the universal left $G$-ambit, where $G^*/E_\mu$ is equipped with the {\em logic topology} (i.e. the closed subsets of $G^*/E_\mu$ are those subsets whose preimages under the quotient map are type-definable subsets of $G^*$).

From this, it is easy to get an analogous description of the universal right $G$-ambit. It is clear that it will be $(G,G^*/E_\mu,e/E_\mu)$ with the right action of $G$ on $G^*/E_\mu$ given by $(a/E_\mu) * g:= g^{-1} \cdot (a/E_\mu)= (g^{-1}a)/E_\mu$. 
Now, applying the group-theoretic inverse to everything, we get the relation 
$$E_\mu^r:=  E_\mu^{-1} = \; \sim^r \circ \equiv_M \; = \; \equiv_M \circ \sim^r,$$ 
where $a \sim^r b \iff a^{-1}b \in \mu$, the right action of $G$ on $G^*/E_\mu^r$ given by 
$$(a/E_\mu^r) g: = ((a^{-1}/E_\mu)* g)^{-1} =  ((g^{-1}a^{-1})/E_\mu)^{-1} = (ag)/E_\mu^r,$$ and the universal right $G$-ambit is exactly $(G,G^*/E_\mu^r,e/E_\mu^r)$ with this action.

\subsection{Structural Ramsey theory}\label{subsection: Ramsey theory}

In this paper, we will be talking about colorings of embeddings rather than of substructures (as in \cite{Zu}; in particular, see \cite[Proposition 4.4]{Zu}). Let $\mathcal{C}$ be a class of finite structures in a language $L$. For two finite $L$-structures  $A$ and $B$, by $\mbox{Emb}(A,B)$ we denote the set of all embeddings from $A$ to $B$; $A \leq B$ means that $\mbox{Emb}(A,B) \ne \emptyset$. We say that $\mathcal{C}$ has the {\em embedding Ramsey property} (ERP), if for every $A, B \in \mathcal{C}$ with $A \leq B$ and for any $r \in \omega$ there is $C \in \mathcal{C}$ with $B \leq C$ such that for any coloring $c \colon \mbox{Emb}(A,C) \to r$ there is $f \in \mbox{Emb}(B,C)$ such that $f \circ \mbox{Emb}(A,B)$ is monochromatic with respect to $c$.


Now, we recall one of the fundamental results of Kechris, Pestov, Todor\v{c}evi\'{c} theory (see Theorem 5.1 in \cite{Zu}), which we will reprove and generalize in Section \ref{section: extreme amenability}.

\begin{fct}\label{fact: extreme amenability}
If $\mathcal{K}$ is a Fra\"{i}ss\'{e} class of finite structures with Fra\"{i}ss\'{e} limit $K$, then $\mathcal{K}$ has the ERP if and only if $\aut(K)$ is extremely amenable.
\end{fct}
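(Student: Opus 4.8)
\section*{Proof proposal}

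The plan is to route the equivalence through an intermediate, purely ``internal'' statement about $K$. Call it $(\star)$: for all finite substructures $A \subseteq B$ of $K$, all $r \in \omega$, and all colorings $\gamma \colon \mathrm{Emb}(A,K) \to r$, there is $f \in \mathrm{Emb}(B,K)$ such that $\gamma$ is constant on $f \circ \mathrm{Emb}(A,B) \subseteq \mathrm{Emb}(A,K)$. I will prove $\mathrm{ERP}(\mathcal{K}) \iff (\star) \iff$ extreme amenability of $\aut(K)$. Throughout I use that $\mathcal{K} = \age(K)$ and that, by ultrahomogeneity of $K$, $\aut(K)$ acts transitively on each $\mathrm{Emb}(A,K)$ and every $f \in \mathrm{Emb}(B,K)$ extends to an automorphism of $K$.

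The equivalence $\mathrm{ERP}(\mathcal{K}) \iff (\star)$ is a routine compactness argument. For $\Rightarrow$: given $\gamma$, realize a witness $C$ from the ERP as an actual finite substructure of $K$ containing $B$ (using ultrahomogeneity to align the copy of $B$), apply the Ramsey property to $\gamma \restriction \mathrm{Emb}(A,C)$, and note that the resulting monochromatic $f \circ \mathrm{Emb}(A,B)$ already lives inside $\mathrm{Emb}(A,K)$. For $\Leftarrow$: if ERP failed for some $(A,B,r)$, then for every finite $C$ with $B \subseteq C \subseteq K$ we would get a ``bad'' coloring $c_C$ of $\mathrm{Emb}(A,C)$; the sets of colorings in $r^{\mathrm{Emb}(A,K)}$ that restrict to something bad against $C$ are closed, nonempty, and have the finite intersection property (enlarge $C$), so by compactness of $r^{\mathrm{Emb}(A,K)}$ there is a single $\gamma$ bad against every finite $C$; since every $f \in \mathrm{Emb}(B,K)$ together with $f \circ \mathrm{Emb}(A,B)$ sits inside one such $C$, this $\gamma$ is bad against every $f \in \mathrm{Emb}(B,K)$, contradicting $(\star)$.

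For $(\star) \iff$ extreme amenability I would use the description of the universal $\aut(K)$-ambit (the one recalled in Section \ref{subsection: universal G-ambit}, or the type-space refinement of it). Write $G = \aut(K)$. A basic neighborhood of $e$ is a pointwise stabilizer $G_{\bar a}$, so the right-uniformly continuous finite-valued functions on $G$ are precisely the pullbacks of colorings of the orbits $G\bar a$, which (by ultrahomogeneity) are identified with the sets $\mathrm{Emb}(\langle\bar a\rangle,K)$; correspondingly the universal ambit is assembled from the flows $\overline{G\cdot d} \subseteq r^{\mathrm{Emb}(A,K)}$ under the action $(g\cdot d)(f) = d(g^{-1}\circ f)$. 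Extreme amenability of $G$ is then equivalent to the a priori weaker assertion that each such flow contains a $G$-fixed point. Since $G$ is transitive on $\mathrm{Emb}(A,K)$, a $G$-fixed point of $\overline{G\cdot \gamma}$ is a constant coloring, and its lying in the closure says exactly that for every finite $S \subseteq \mathrm{Emb}(A,K)$ there is $g \in G$ with $\gamma(g^{-1}\circ -)$ constant on $S$. Taking $S = e_B \circ \mathrm{Emb}(A,B)$ for a fixed $e_B \in \mathrm{Emb}(B,K)$ yields $(\star)$ (with $f = g^{-1}\circ e_B$); conversely any finite $S$ lies inside some $f_0 \circ \mathrm{Emb}(A,B)$ for a suitable finite $B$ and $f_0 \in \mathrm{Emb}(B,K)$, so $(\star)$ supplies the needed $g$. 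The ``$\Rightarrow$'' half of this can be seen bare-handedly without any ambit: $r^{\mathrm{Emb}(A,K)}$ is a $G$-flow, $\overline{G\cdot \gamma}$ a subflow, extreme amenability gives a $G$-fixed (hence constant) point in it, and the limiting step above extracts the monochromatic $f_0 \circ \mathrm{Emb}(A,B)$.

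I expect the genuine work to lie in the implication $(\star) \Rightarrow$ extreme amenability, i.e.\ in producing a $G$-fixed point in an \emph{arbitrary} $G$-flow out of the finitary data in $(\star)$. This means showing that the (nonempty) $G$-fixed-point sets of the flows $\overline{G\cdot d}$, one for each finite $A$ and coloring $d$, cohere into a compatible system along the bonding maps, so that their inverse limit --- a $G$-fixed point of the universal ambit, hence of every $G$-flow --- is nonempty. It is this coherence that forces one to pass through the universal ambit rather than a single flow, and it is exactly here that the model-theoretic presentation of the universal ambit as a space of types in the expanded language does the bookkeeping. Some care with the left/right conventions for the ambit and the action is also needed, but that is cosmetic.
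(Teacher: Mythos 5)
Your reduction of the ERP to the internal statement $(\star)$ is exactly the paper's Remark \ref{remark: C can be replaced by M}, and your argument for ``extreme amenability $\Rightarrow (\star)$'' (fixed point in the orbit closure $\overline{G\cdot\gamma}\subseteq r^{\mathrm{Emb}(A,K)}$ is a constant coloring, then approximate on the finite set $e_B\circ\mathrm{Emb}(A,B)$) is correct and close in spirit to the paper's $(1)\rightarrow(2)$. The problem is the converse, which you yourself flag as ``the genuine work'': you never prove $(\star)\Rightarrow$ extreme amenability, you only name the obstacle (coherence of fixed points across the factors of the universal ambit) and defer it to ``the model-theoretic presentation does the bookkeeping.'' That deferral hides the actual content. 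Knowing that each individual flow $\overline{G\cdot d}$ contains a $G$-fixed point does not give you a fixed point of the universal ambit: the fixed points in the separate factors are chosen independently and need not be compatible under the bonding maps, and a fixed point of $\underset{\bar a}{\varprojlim}\,\beta A_{\bar a}$ is a \emph{joint} condition (the action moves the coordinate $\bar a$ to $g(\bar a)$), not a coordinatewise one. So the claim ``extreme amenability is equivalent to the a priori weaker assertion that each such flow contains a fixed point'' is precisely the nontrivial half of the theorem, and your proposal asserts it without an argument.

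What is missing, concretely, is a \emph{simultaneous finitary} statement plus a compactness step. In the paper this is condition (iv) of the proof of Theorem \ref{theorem: extreme amenability}: for every finite $\bar a$, every $g_0,\dots,g_{n-1}\in G$ and every finitely many full-language formulas $\varphi_0,\dots,\varphi_{r-1}$, there is a single $\sigma\in G$ with $\bigwedge_{i<r}\bigwedge_{j<n}\bigl(\varphi_i(\sigma(g_j(\bar a)))\leftrightarrow\varphi_i(\sigma(\bar a))\bigr)$. Two devices make $(\star)$ (equivalently ERP) yield this: finitely many colorings are handled \emph{at once} by passing to the product coloring $c\colon {M\choose\bar a}\to 2^r$ recording the truth-value vector of the $\varphi_i$'s, and finitely many translates are handled at once by choosing $B$ containing $\bar a, g_0(\bar a),\dots,g_{n-1}(\bar a)$, so that one monochromatic copy $B'=\sigma[B]$ fixes all the equivalences simultaneously. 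Only after this does a compactness argument (in the paper, $|M|^+$-saturation of the monster $\mathcal{M}^*$, giving $\sigma\in G^*$ with $\tp^{\textrm{full}}(\sigma(g(\bar m)))=\tp^{\textrm{full}}(\sigma(\bar m))$ for all $g\in G$, i.e.\ a point $p\in\Sigma^{\mathcal{M}}$ with $p\cdot G=\{p\}$; topologically, an ultrafilter limit of approximately fixed translates) glue the approximations into an actual fixed point of the universal ambit, and hence of every flow. Since none of this appears in your outline --- in particular no mechanism for making the individual fixed points cohere --- the central implication of the theorem remains unproved.
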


To recall the second main result describing universal minimal flows of automorphism groups of some Fra\"{i}ss\'{e} structures, we need to recall several notions. We will work in the more general context from \cite{The}, with precompact relational expansions in place of expansions by one symbol $<$.

Consider two countable languages $L$ and $L_0$, where $L$ is obtained from $L_0$ by adding countably many relation symbols. Let $\mathcal{K}_0$ be a  Fra\"{i}ss\'{e} class in $L_0$ consisting of finite structures. We say that a class $\mathcal{K}$ of $L$-structures is an {\em expansion} of $\mathcal{K}_0$ if each structure in $\mathcal{K}$ is an expansion of a structure from $\mathcal{K}_0$, and conversely, each structure from $\mathcal{K}_0$ has an expansion to a structure in $\mathcal{K}$. Whenever $\mathcal{K}$ is an expansion of $\mathcal{K}_0$, $\mathcal{K}$ is said to have the {\em expansion property} relative to $\mathcal{K}_0$ if for every $A_0 \in \mathcal{K}_0$ there exists $B_0 \in \mathcal{K}_0$ such that for every $A,B \in \mathcal{K}$ with $A \upharpoonright L_0=A_0$ and  $B \upharpoonright L_0 = B_0$ one has $A \leq B$.

Let $K_0$ be the Fra\"{i}ss\'{e} limit of a Fra\"{i}ss\'{e} class $\mathcal{K}_0$ of finite structures. We say that an $L$-expansion $K$ of $K_0$ is {\em precompact} if each structure from $\mathcal{K}_0=\mbox{Age}(K_0)$ has only finitely many expansions to structures in $\mbox{Age}(K)$.

Denote $L\!\setminus \!L_0 =\{ R_i: i \in I\}$.
The set of $L \! \setminus \! L_0$-structures on a universe $K$ can be naturally treated as the compact space $X:=\prod_{i\in I} \{0,1\}^{K^{n_i}}$ with the product topology, where $n_i$ is the arity of $R_i$. Now, if $K_0$ is a Fra\"{i}ss\'{e} structure with an expansion $K$, then $\vec{R}:=K \upharpoonright (L\! \setminus \! L_0)$ is naturally an element of $X$. Moreover, we have a natural left [and right] action of $\aut(K_0)$ on $X$: the left action is just given by the left translations of each relation from $L \! \setminus \! L_0$ on $K$ (i.e. $gR_i:=\{(gx_1,\dots,gx_{n_i}): (x_1,\dots,x_{n_i}) \in R_i\}$), and the right action is the left action by the inverse.

For the next fact see \cite[Theorem 5]{The}. We focus here only on one direction of this theorem, yielding a description of the universal minimal flow. The closure in this paper is taken with respect to the product topology on $X$. In \cite[Theorem 5]{The}, a finer topology is considered (see \cite[Section 2]{The}), which coincides with the product topology when $L \setminus L_0$ is finite.
In general, by the fact that $\cl(\aut(K_0) \cdot \vec{R})$ is compact in the finer topology (see \cite[Proposition 1]{The}), the version of the theorem with the finer topology is equivalent to the version with the product topology (more precisely, the closure in the conclusion is the same for both topologies).

\begin{fct}\label{fact: universal minimal flow}
Let $K_0$ be a locally finite Fra\"{i}ss\'{e} structure, and $K$ be a Fra\"{i}ss\'{e} precompact, relational
expansion of $K_0$. Assume that the class $\mbox{Age}(K)$ has the ERP as well as the expansion property
relative to $\mbox{Age}(K_0)$. Then the $\aut(K_0)$-subflow $\cl(\aut(K_0) \cdot \vec{R})$ of $X$ is the universal minimal left $\aut(K_0)$-flow. Equivalently, the $\aut(K_0)$-subflow $\cl(\vec{R} \cdot \aut(K_0))$ of $X$ is the universal minimal right $\aut(K_0)$-flow.
\end{fct}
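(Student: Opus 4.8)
Write $G_0 := \aut(K_0)$, $G := \aut(K)$ and $Y := \cl(G_0\cdot\vec{R})\subseteq X$. The plan is to prove separately that (i) $Y$ is a minimal $G_0$-flow and (ii) $Y$ admits a $G_0$-flow epimorphism onto every minimal $G_0$-flow; by the uniqueness of the universal minimal flow recalled in Section~\ref{section: preliminaries} this gives that $Y$ is the universal minimal left $G_0$-flow, and the statement about the right action then follows from the left--right duality of Subsection~\ref{subsection: universal G-ambit}. Two preliminary observations organize everything. First, since $K=(K_0,\vec{R})$, the stabilizer of $\vec{R}$ under the left action of $G_0$ on $X$ is exactly $G$, so $G$ is a closed subgroup of $G_0$ and $g\mapsto g\cdot\vec{R}$ identifies $G_0/G$ (quotient topology) with the orbit $G_0\cdot\vec{R}$; using ultrahomogeneity of $K$ one rewrites the condition ``$(g\vec{R})\upharpoonright F=(g'\vec{R})\upharpoonright F$'' (agreement over a finite $F\subseteq K_0$, for all relations of $L\setminus L_0$) as ``$g'\in V_F g G$'', where $V_F\leq G_0$ is the pointwise stabilizer of $F$, and the role of precompactness is precisely that these finite-index equivalence relations $E_F$ then generate the uniformity that $G_0\cdot\vec{R}$ inherits from $X$; hence $Y$ is the completion $\widehat{G_0/G}$ of $G_0/G$ for the canonical (totally bounded) uniformity generated by the $E_F$. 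The model-theoretic presentation of the universal $G_0$-ambit from Subsection~\ref{subsection: universal G-ambit} realizes $Y$ concretely as a $G_0$-flow quotient of that ambit, with $\vec{R}$ the image of the distinguished point. Second, for any $\vec{S}\in Y$ every finite substructure of $(K_0,\vec{S})$ is isomorphic to a finite substructure of $(K_0,\vec{R})=K$, hence lies in $\age(K)$.

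For (i) I would run the classical Kechris-Pestov-Todor\v{c}evi\'{c} argument using the expansion property, phrasing the amalgamation/embedding steps through realizing types in a monster model of $\Th(K)$ where convenient. It suffices to show $\vec{R}\in\cl(G_0\cdot\vec{S})$ for every $\vec{S}\in Y$, i.e.\ that for each finite $F\subseteq K_0$ there is $g\in G_0$ with $(g\vec{S})\upharpoonright F=\vec{R}\upharpoonright F$. Apply the expansion property to $A_0:=(K\upharpoonright F)\upharpoonright L_0$ to get $B_0\in\age(K_0)$; realize a copy $F'\subseteq K_0$ of $B_0$; then $(F',\vec{S}\upharpoonright F')$ is an $L$-expansion of $K_0\upharpoonright F'\cong B_0$ lying in $\age(K)$ by the second observation, while $K\upharpoonright F$ is an $L$-expansion of $A_0$ in $\age(K)$, so the expansion property yields an $L$-embedding $j\colon K\upharpoonright F\hookrightarrow(F',\vec{S}\upharpoonright F')$; extending the underlying $L_0$-embedding $j\colon F\to K_0$ to an automorphism $g_0$ of $K_0$ (ultrahomogeneity of $K_0$), one checks that $g:=g_0^{-1}$ works. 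Since then $\cl(G_0\cdot\vec{S})$ is a subflow of $Y$ containing $\cl(G_0\cdot\vec{R})=Y$, and $\vec{S}\in Y$ was arbitrary, $Y$ is minimal. This is the only use of the expansion property.

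For (ii) the input is extreme amenability of $G=\aut(K)$, which holds by Fact~\ref{fact: extreme amenability} because $\age(K)$ has the ERP. Let $Z$ be any minimal $G_0$-flow. Restricting the action to $G$ and using extreme amenability gives a $G$-fixed point $z^{*}\in Z$, so the orbit map $G_0\to Z$, $g\mapsto g\cdot z^{*}$, factors through $G_0/G$. I would then check that the induced map $G_0/G\to Z$ is uniformly continuous for the uniformity identified above: given a symmetric $G_0$-invariant entourage $W$ of $Z$ (the uniformity of the compact flow $Z$ being $G_0$-invariant), continuity of the action and compactness of $Z$ provide a neighbourhood $V$ of the identity with $(v\cdot z,z)\in W$ for all $z\in Z$, $v\in V$; choose a finite $F$ with $V_F\subseteq V$; if $g'\in V_F g G$, say $g'=vgh$ with $v\in V_F$, $h\in G$, then $g'\cdot z^{*}=v\cdot(g\cdot z^{*})$, so $(g\cdot z^{*},g'\cdot z^{*})\in W$. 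Hence this map extends to a continuous $G_0$-equivariant map $Y=\widehat{G_0/G}\to Z$, which is onto since its image is closed and contains the dense orbit $G_0\cdot z^{*}$. Thus $Y$ maps onto every minimal $G_0$-flow, and being minimal by (i) it is the universal minimal left $G_0$-flow; the right-hand statement follows by the duality of Subsection~\ref{subsection: universal G-ambit}.

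The step I expect to be the crux is the identification in the first paragraph: pinning down $\cl(G_0\cdot\vec{R})$ as the completion $\widehat{G_0/G}$ and as a quotient of the universal $G_0$-ambit, so that one may move freely between the combinatorial picture inside $X$ and the model-theoretic/type-space picture — this is exactly where the precompactness hypothesis enters. Once that dictionary is in place, (i) is the expansion-property argument above and (ii) is the short uniform-continuity estimate together with Fact~\ref{fact: extreme amenability}.
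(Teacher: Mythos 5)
Your proposal is correct, but it is essentially the classical Kechris--Pestov--Todor\v{c}evi\'{c}/Nguyen Van Th\'{e} argument rather than the route this paper takes. You prove minimality of $Y=\cl(\aut(K_0)\cdot\vec{R})$ from the expansion property and then prove universality by hand: you identify $Y$ with the completion of $\aut(K_0)/\aut(K)$ for the quotient of the right uniformity (this is where you place precompactness), use extreme amenability of $\aut(K)$ (Fact \ref{fact: extreme amenability}) to get an $\aut(K)$-fixed point in an arbitrary minimal $\aut(K_0)$-flow, and extend the orbit map through the completion via a uniform-continuity estimate. The paper instead stays inside its type-space model of the universal ambit: by Theorem \ref{theorem: extreme amenability} the ERP yields a type $p\in\Sigma^{\mathcal{M}}\subseteq\Sigma^{\mathcal{M}_0}$ with $p\cdot\aut(M)=\{p\}$; Theorem \ref{theorem: universal minimal flow general result} uses only the (right) expansion property to show that $\cl(p\cdot\aut(M_0))$ is minimal, and universality is then automatic because this is a minimal subflow of the universal ambit, so no completion or equicontinuity argument is needed; precompactness enters only afterwards, to show that the natural map $\Phi$ from $\Sigma^{\mathcal{M}_0}$ to the space $X$ of expansions is injective on this minimal flow with image $\cl(\vec{R}\cdot\aut(M_0))$. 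Your approach buys a self-contained argument carried out directly in $X$, independent of the type-space description; the paper's buys universality for free and a proof that extends verbatim to arbitrary (possibly uncountable, non-Fra\"{i}ss\'{e}) structures. One point you should make explicit: your ``second observation'' (that finite substructures of any $\vec{S}\in Y$ lie in $\age(K)$) is not automatic when $L\setminus L_0$ is infinite --- it requires precompactness, since the finitely many possible restrictions $(g\vec{R})\upharpoonright F$ then form a closed set; similarly, the claim that the relations $E_F$ generate the uniformity induced from $X$ needs precompactness to pass from agreement of finitely many relations on $F$ to agreement of all of them. Both facts are true and routine under the hypotheses, but they are exactly where that hypothesis is consumed in your setup.
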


The name ``precompact'' is used, because precompactness of an  $L$-expansion $K$ of a Fra\"{i}ss\'{e} structure $K_0$ is equivalent to topological precompactness  of the metric subspace $\aut(K_0) \cdot \vec{R}$ of $X$ equipped with a certain natural metric (see Section 2 of \cite{The}) which induces a finer topology (the one mentioned before Fact \ref{fact: universal minimal flow}) on $X$ than the product one. But in this paper, we will not use this metric at all.

Let $K$ be the Fra\"{i}ss\'{e} limit of a Fra\"{i}ss\'{e} class $\mathcal{K}$ of finite structures.
Zucker \cite{Zu} found a very interesting connection between metrizability of the universal minimal $\aut(K)$-flow and a Ramsey-theoretic property of $\mathcal{K}$.

\begin{dfn}
A class $\mathcal{C}$ of finite structures in some language has {\em separately finite embedding Ramsey degree} if for every $A \in \mathcal{C}$ there is a natural number $k$ such that for every $B \in \mathcal{C}$ with $A \leq B$ and for every $r \in \omega$ there is $C \in \mathcal{C}$ with $B \leq C$ such that for any coloring $c \colon \mbox{Emb}(A,C) \to r$ there is $f \in \mbox{Emb}(B,C)$ such that $c[f \circ \mbox{Emb}(A,B)]$ is of size at most $k$. 
\end{dfn}

We used the word ``separately'' to reflect the fact that $k$ depends on $A$.
By \cite[Proposition 4.4]{Zu}, in this definition we could equivalently color substructures instead of embeddings. 
The next result is \cite[Theorem 8.7]{Zu}.

\begin{fct}\label{fact: metrizability}
Let $K$ be a locally finite Fra\"{i}ss\'{e} structure, and $\mathcal{K} = \age(K)$.
Then the following conditions are equivalent.
\begin{enumerate}
\item The universal minimal $\aut(K)$-flow is metrizable.
\item $\mathcal{K}$ has separately finite embedding Ramsey degree.
\end{enumerate}
\end{fct}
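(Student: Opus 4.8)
The plan is to combine the type-space presentation of the universal ambit from Section~\ref{section: universal ambit} with two soft point-set observations and one combinatorial computation; this reduces metrizability of the universal minimal flow to finiteness of its projections to finite substructures, which are in turn governed by the embedding Ramsey degrees.

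Write $G=\aut(K)$, fix an enumeration $\bar m$ of $K$, and let $\mathcal U$ be the universal right $G$-ambit. By Section~\ref{section: universal ambit}, $\mathcal U$ is a space of types of $\bar m$ over $K$ in the rich language, and --- this is precisely the recovery of Zucker's presentation noted there --- it is the inverse limit $\mathcal U=\varprojlim_{A}\mathcal U_A$ over the countable family of finite substructures $A\le K$ (directed by amalgamation), where $\mathcal U_A$ is the restriction of a type to the subtuple of $\bar m$ enumerating $A$; concretely $\mathcal U_A\cong\beta\,\mathrm{Emb}(A,K)$, the \v{C}ech--Stone compactification of the countable discrete set $\mathrm{Emb}(A,K)$ (equivalently, the Stone dual of the Boolean algebra of finite colourings of $\mathrm{Emb}(A,K)$), with transition maps induced by precomposition $\mathrm{Emb}(B,K)\to\mathrm{Emb}(A,K)$ for $A\le B$. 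Let $\mathcal M\subseteq\mathcal U$ be a minimal subflow, i.e.\ the universal minimal $G$-flow, and put $\mathcal M_A:=\pi_A[\mathcal M]\subseteq\mathcal U_A$, a closed subset; since $(\text{transition})\circ\pi_B=\pi_A$, the transition maps restrict to surjections $\mathcal M_B\to\mathcal M_A$, and a general fact about compact inverse limits gives $\mathcal M=\varprojlim_A\mathcal M_A$.

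Now the two soft steps. First, since the index family is countable, $\mathcal M=\varprojlim_A\mathcal M_A$ is a closed subspace of $\prod_A\mathcal M_A$, so $\mathcal M$ is metrizable iff every $\mathcal M_A$ is metrizable (for the nontrivial direction use that a continuous image of a compact metrizable space is compact metrizable). Second, it is standard that $\beta\,\mathrm{Emb}(A,K)\cong\beta\mathbb{N}$ is an $F$-space, hence has no nontrivial convergent sequences, hence every compact metrizable subspace of it is finite; applying this to $\mathcal M_A$ shows that $\mathcal M_A$ is metrizable iff $\mathcal M_A$ is finite. Combining: $\mathcal M$ is metrizable iff $\mathcal M_A$ is finite for every $A\in\age(K)$.

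It remains to prove that $\mathcal M_A$ is finite iff $A$ has finite embedding Ramsey degree; in fact $|\mathcal M_A|$ equals that degree $t(A)$. For $t(A)\le|\mathcal M_A|$: given $B\ge A$, $r$, and a colouring of $\mathrm{Emb}(A,C)$ (one reduces, by a routine compactness/transfer argument, to colourings $c$ of $\mathrm{Emb}(A,K)$), pick any $p\in\mathcal M$; for $\pi_B(p)$-almost all $g\in\mathrm{Emb}(B,K)$ the set of colours $c$ takes on $g\circ\mathrm{Emb}(A,B)$ equals $\{\widehat c(q_\theta):\theta\in\mathrm{Emb}(A,B)\}$, where $q_\theta\in\mathcal M_A$ is the image of $\pi_B(p)$ under the transition map attached to $\theta$; this set has at most $|\mathcal M_A|$ elements, so a good $g$ exists. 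The reverse inequality $|\mathcal M_A|\le t(A)$ is the crux, and is where minimality of $\mathcal M$ enters essentially: distinct points of $\mathcal M_A$ are separated by a clopen partition of $\beta\,\mathrm{Emb}(A,K)$, i.e.\ by a finite colouring of $\mathrm{Emb}(A,K)$, and one must then produce a single finite $B\ge A$ such that every copy of $B$ in $K$ already displays all of these colours among its $A$-subcopies --- contradicting a bound on $t(A)$. Extracting such a $B$ from the spread of $\mathcal M_A$ uses density of $G\cdot p$ in $\mathcal M$ in the same spirit as the proofs of Fact~\ref{fact: extreme amenability} and Fact~\ref{fact: universal minimal flow} in Sections~\ref{section: extreme amenability} and~\ref{section: universal minimal flow}, and is the step I expect to demand the most care. (Alternatively, the implication (2)$\Rightarrow$(1) can be obtained without this inequality, by building from finite Ramsey degrees a precompact relational expansion of $K$ whose age has the ERP and the expansion property relative to $\age(K)$, and then invoking Fact~\ref{fact: universal minimal flow} to realise $\mathcal M$ as a closed subflow of $\prod_{i<\omega}\{0,1\}^{K^{n_i}}$, which is metrizable since $K$ is countable; but that companion construction is itself substantial.) Granting these equivalences, the theorem follows: $\mathcal M$ is metrizable $\iff$ every $\mathcal M_A$ is finite $\iff$ every $A\in\age(K)$ has finite embedding Ramsey degree, which is exactly condition~(2).
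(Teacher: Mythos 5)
Your direction $(1)\Rightarrow(2)$ is essentially correct and runs parallel to the paper's: project the (minimal subflow realizing the) universal minimal flow to the finite levels $\Sigma^{\mathcal{M}}_{\bar a}\cong\beta A_{\bar a}$ (Proposition \ref{proposition: our inverse limit is the same as Zucker's}), use that a compact metrizable subspace of $\beta$ of a countable discrete set is finite, and then count colours via the ultrafilter $\pi_B(p)$; apart from the easy check that each $q_\theta$ really lies in $\mathcal M_A$ (it is $\pi_{\bar a}(p\cdot g_\theta)$ for $g_\theta\in\aut(K)$ extending $\theta$, so $G$-invariance of $\mathcal M$ is what you need), and the reduction of Remark \ref{remark: M instead of C for Ramsey degree}, this half is complete.

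The direction $(2)\Rightarrow(1)$, however, is a genuine gap, and you say so yourself: everything is made to rest on the inequality $|\mathcal M_A|\le t(A)$ for your fixed minimal subflow, which you do not prove. The sketch you give for it does not work as stated: from distinct points $q_0,\dots,q_k\in\mathcal M_A$ and a separating clopen colouring $c$ of $\mathrm{Emb}(A,K)$, you want a finite $B$ such that \emph{every} copy of $B$ in $K$ exhibits all $k+1$ colours among its $A$-subcopies. Minimality (plus compactness) only yields statements of this kind for the ``ideal'' copies: for every $\sigma\in G^*$ with $\tp^{\textrm{full}}(\sigma(\bar m))\in\mathcal M$, all colours occur among the types $\tp^{\textrm{full}}(\sigma(g(\bar a)))$, $g$ ranging over a fixed finite set; this says nothing about the $c$-colours of the actual tuples $h(g(\bar a))$ for $h\in\aut(K)$, which is what a contradiction with the Ramsey degree bound would require, since points of $\mathcal M$ are not realized types. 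Note also that $|\mathcal M_A|\le t(A)$ is stronger than what the theorem needs. The paper's route avoids it entirely: Lemma \ref{lemma: extension of finite Ramsey deg} upgrades the degree hypothesis to simultaneous control of finitely many tuples and colourings, and then saturation of $\mathcal{M}^*$ produces a single $\sigma\in G^*$ such that \emph{all} projections $X_{\bar a}=\{\tp^{\textrm{full}}(\sigma(g(\bar a))):g\in G\}$ are finite at once; hence $\cl(\tp^{\textrm{full}}(\sigma(\bar m))\cdot G)$ sits inside a countable inverse limit of finite spaces, is metrizable, and the universal minimal flow, being a homomorphic image (indeed a minimal subflow) of it, is metrizable. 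This compactness/simultaneity mechanism is the missing ingredient in your proposal; your alternative parenthetical route via a precompact expansion with the ERP and the expansion property would also work in principle, but, as you acknowledge, you have not carried out that construction either.
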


\subsection{Structural Ramsey theory in this paper}\label{section: structure Ramsey theory for arbitrary structures}

Here, we extend the definitions from the previous subsection in the form that we will be using in this paper.

Note that if $A$ and $B$ are structures in the same language, and we enumerate $A$ as $\bar a$, then each embedding $f \in \mbox{Emb}(A,B)$ is naturally identified with the tuple $\bar a' =f(\bar a)$ contained in $B$, and in this way the set of embeddings of $A$ to $B$ is the same thing as ${B \choose \bar a}^{qf} :=\{ \bar a' : \bar a' \subseteq B \;\, \mbox{and}\;\, \bar a' \equiv^{qf} \bar a\}$. Hence, if $\mathcal{K}$ is a Fra\"{i}ss\'{e} class of finite structures in a relational language with Fra\"{i}ss\'{e} limit $K$, then $\mathcal{K}$ has the ERP if and only if for any finite tuple $\bar a$ from $K$ and a finite set $B \subseteq K$ containing $\bar a$, for any $r \in \omega$, there is a finite $C \subseteq K$ containing $B$ such that for every coloring $c\colon {C \choose \bar a}^{qf} \to r$ there is an isomorphic copy $B'\subseteq C$ of $B$ such that ${B' \choose \bar a}^{qf}$ is monochromatic with respect to $c$. But by ultrahomogeneity of $K$, for any finite $\bar a$ in $K$ and any $B\subseteq K$, we have  ${B \choose \bar a}^{qf} :=\{ \bar a' : \bar a' \subseteq B \;\, \mbox{and}\;\, \bar a'= f(\bar a)\;\, \mbox{for some}\;\, f \in \aut(K)\}$. This classical situation leads us to the following generalization, which will be used in our results.

Let $M$ be an arbitrary (possible uncountable) structure in an arbitrary language. For any tuple $\bar a$ in $M$ and $B \subseteq M$, by ${B \choose \bar a}$ we will mean the set $\{ \bar a' : \bar a' \subseteq B \;\, \mbox{and}\;\, \bar a'= f(\bar a)\;\, \mbox{for some}\;\, f \in \aut(M)\}$; an analogous notation applies when $\bar a$ is replaced by a subset $A$ of $M$. A family $\mathcal{A}$ of finite subsets of $M$ is said to be {\em cofinal (in $M$)}  if every finite subset of $M$ is contained in a member of $\mathcal{A}$. 

\begin{dfn}\label{definition: Ramsey property}
\begin{enumerate}
\item We will say that $M$ has the {\em embedding Ramsey property} (ERP) if for any finite tuple $\bar a$ in $M$ (possibly with repetitions) and a finite set $B\subseteq M$ containing $\bar a$, for any $r \in \omega$, there is a finite $C \subseteq M$ containing $B$ such that for every coloring $c\colon {C \choose \bar a} \to r$ there is $B' \in {C \choose B}$ such that ${B' \choose \bar a}$ is monochromatic with respect to $c$.
\item A cofinal family $\mathcal{A}$ of finite subsets of $M$ has the {\em embedding Ramsey property} (ERP) if for any tuple $\bar a$ enumerating a member of $\mathcal A$ and a finite set $B \in \mathcal{A}$ containing $\bar a$, for any $r \in \omega$, there is a finite $C \in \mathcal{A}$ containing $B$ such that for every coloring $c\colon {C \choose \bar a} \to r$ there is $B' \in {C \choose B}$ such that ${B' \choose \bar a}$ is monochromatic with respect to $c$.
\end{enumerate}
\end{dfn}

\begin{rem}\label{remark: equivalence of various definitions of ERP}
The following conditions are equivalent for an arbitrary structure $M$.
\begin{enumerate}
\item $M$ has the ERP.
\item Every structure $M'$ with the same universe as $M$ and the same group of automorphisms has the ERP.
\item Some cofinal family of finite subsets of $M$ has the ERP.
\item Every cofinal family of finite subsets of $M$ has the ERP. 
\end{enumerate}
\end{rem}

\begin{proof}
The implications $(1) \rightarrow (2) \rightarrow (4) \rightarrow (3)$ are trivial.\\
$(3) \rightarrow (1)$. Suppose $\mathcal{A}$ is a cofinal family of finite subsets of $M$ with the ERP. Consider any finite tuple $\bar \alpha$ in $M$. To show the ERP for $\bar \alpha$, we can clearly assume that $\bar \alpha$ does not have repetitions. As $\mathcal{A}$ is cofinal, $\bar \alpha$ can be extended to a tuple $\bar a$ enumerating a member of $\mathcal{A}$. Consider any finite $B \subseteq M$ containing $\bar \alpha$. As $\mathcal{A}$ is cofinal, we can find $\tilde{B} \in \mathcal{A}$ containing $B$ and $\bar a$, and such that every $\bar \alpha' \in {B \choose \bar \alpha}$ extends to an $\bar a' \in {\tilde{B} \choose \bar a}$.   Consider any $r \in \omega$. Since $\mathcal{A}$ has the ERP, we can find $C \in \mathcal{A}$ containing $\tilde{B}$ such that for every coloring $\tilde{c}\colon {C \choose \bar a} \to r$ there is $\tilde{B}' \in {C \choose \tilde{B}}$ such that ${\tilde{B}' \choose \bar a}$ is monochromatic. Now, consider any coloring $c\colon {C \choose \bar \alpha} \to r$. It extends to a coloring $\tilde{c} \colon {C \choose \bar a} \to r$ by giving the color $c$ of the subtuple corresponding to $\bar \alpha$. Since $\tilde{B}' = \sigma[\tilde{B}]$ for some $\sigma \in \aut(M)$, it is clear that $B' := \sigma[B]$ has the desired property that ${B' \choose \bar \alpha}$ is monochromatic with respect to $c$.
\end{proof}

We will return to this general context in Section \ref{section: extreme amenability}. A similar discussion applies to finite embedding Ramsey degrees.

\begin{dfn}\label{definition: finite Ramsey degree of M}
\begin{enumerate}
\item We will say that $M$ has {\em separately finite embedding Ramsey degree} if for any finite tuple $\bar a$ (possibly with repetitions) there exists $k_{\bar a} \in \omega$ such that for every finite $B \subseteq M$ containing $\bar a$ and for any $r \in \omega$ there is a finite $C\subseteq M$ containing $B$ such that for every coloring $c\colon {C \choose \bar a} \to r$ there is $B' \in {C \choose B}$ such that the set $c[{B' \choose \bar a}]$ is of size at most $k_{\bar a}$.
\item A cofinal family $\mathcal{A}$ of finite subsets of $M$ has {\em separately finite embedding Ramsey degree} if for any finite tuple $\bar a$ enumerating a member of $\mathcal A$ there exists $k_{\bar a} \in \omega$ such that for every $B \in \mathcal{A}$ containing $\bar a$ and for any $r \in \omega$ there is a finite $C \in \mathcal{A}$ containing $B$ such that for every coloring $c\colon {C \choose \bar a} \to r$ there is $B' \in {C \choose B}$ such that the set $c[{B' \choose \bar a}]$ is of size at most $k_{\bar a}$.
\end{enumerate}
\end{dfn}

\begin{rem}
The following conditions are equivalent for an arbitrary structure $M$.
\begin{enumerate}
\item $M$ has separately finite embedding Ramsey degree.
\item Every structure $M'$ with the same universe as $M$ and the same group of automorphisms has separately finite embedding Ramsey degree.
\item Some cofinal family of finite subsets of $M$ has separately finite embedding Ramsey degree.
\item Every cofinal family of finite subsets of $M$ has separately finite embedding Ramsey degree. 
\end{enumerate}
\end{rem}

A similar discussion applies to the so-called convex embedding Ramsey property, but this will be handled in Section \ref{section: amenability}.

Note that in all these situations, without loss of generality one can pass to the canonical expansion of $M$ to an ultrahomogeneous structure. So, in fact, our generalizations will be only to uncountable structures (as ultrahomogeneity can be always  assumed without loss of generality).

As a corollary of the above discussions we get that if $M$ is a locally finite Fra\"{i}ss\'{e} structure, then $M$ has the ERP [resp. convex embedding Ramsey property, or separately finite embedding Ramsey degree] if and only if $\mbox{Age}(M)$ has it.

\section{Model-theoretic description of the universal ambit}\label{section: universal ambit}

In this section, $M$ is an arbitrary first order structure in a language $L$, and $G:=\aut(M)$ is equipped with the pointwise convergence topology. We will give a model-theoretic realization of the universal $G$-ambit.

Let $\mathcal{M}$ be the structure consisting of two disjoint sorts $G$ and $M$ with predicates for all the subsets of all the finite Cartesian products of sorts; we call this language {\em full}. Note that the natural action of $G$ on $M$ is $\emptyset$-definable in $\mathcal{M}$, all elements of $\mathcal{M}$ are in $\dcl(\emptyset)$, and all the $L$-definable subsets of the Cartesian powers of $M$ are $\emptyset$-definable in $\mathcal{M}$. Hence, $L$-formulas can naturally be identified with equivalent formulas from the full language. (If one prefers, to our full language one can add all the symbols from $L$.) Types in this full language will be denoted by $\tp^{\textrm{full}}$ and in the original  language $L$ by $\tp^L$. Let $\mathcal{M}^*=(G^*,M^*,\dots) \succ \mathcal{M}$ be a monster model (of the theory of $\mathcal{M}$). Then $G^*$ acts definably and faithfully as a group of automorphisms of $M^*$ treated as an $L$-structure. Enumerate $M$ as $\bar m$. Define

$$\Sigma^{\mathcal{M}} := \{\tp^{\textrm{full}} (\sigma(\bar m)): \sigma \in G^*\} = \{\tp^{\textrm{full}} (\sigma(\bar m)/\mathcal{M}): \sigma \in G^*\} .$$

\begin{rem}
Let $S^{\mathcal{M}} := \{ p \in S^\textrm{full}(\emptyset): \tp^L(\bar m) \subseteq p\}$. Then:
\begin{enumerate}
\item $\Sigma^{\mathcal{M}}$ is a closed subset of $S^{\mathcal{M}}$.
\item 
If $M$ is a strongly $\aleph_0$-homogeneous model of an $\omega$-categorical theory  (e.g. the unique countable model), then $\Sigma^{\mathcal{M}}=S^{\mathcal{M}}$. 
\end{enumerate}
\end{rem}

\begin{proof}
(1) follows from $|M|^+$-saturation of $\mathcal{M}^*$, as we get that $\Sigma^{\mathcal{M}}$ is the closed subset of $S^{\mathcal{M}}$ given by the partial type $(\exists \sigma \in G)(\bar x =\sigma(\bar m))$. For (2) consider any $p \in \Sigma^{\mathcal{M}}$. By $\omega$-categoricity, for any finite tuples of variables $\bar x$ and $\bar y$ of the same length, the condition $\tp^L(\bar x) = \tp^L(\bar y)$ is definable by a formula $\varphi(\bar x, \bar y)$ in the language $L$. By the strong $\aleph_0$-homogeneity of $M$,
$$\mathcal{M} \models \varphi(\bar x, \bar y) \rightarrow (\exists \sigma \in G)(\sigma(\bar x) = \bar y),$$
so the same sentence holds in $\mathcal {M}^*$. Now, take any $\bar m' \equiv^L \bar m$ in $M^*$. We conclude that for every corresponding finite subtuples $\bar a'$ and $\bar a$ of $\bar m'$ and $\bar m$, respectively, there is $\sigma \in G^*$ with $\sigma(\bar a)=\bar a'$. By $|M|^+$-saturation of $\mathcal{M}^*$, we get some $\sigma \in G^*$ with 
$\sigma(\bar m)=\bar m'$.
\end{proof}

The main result of this section is the following  description of the universal $G$-ambit.

\begin{thm}\label{theorem: universal G-ambit}
The formula $\tp^{\textrm{full}}(\sigma(\bar m)) \cdot g := \tp^{\textrm{full}}(\sigma(g(\bar m)))$ yields a well-defined right action of $G$ on $\Sigma^{\mathcal{M}}$, and with this action $(G,\Sigma^{\mathcal{M}}, \tp^{\textrm{full}}(\bar m))$ is the universal right $G$-ambit. In particular, this universal ambit is zero-dimensional.
\end{thm}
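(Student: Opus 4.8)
The plan is to identify $(G,\Sigma^{\mathcal M},\tp^{\textrm{full}}(\bar m))$ with the standard model-theoretic description of the universal right $G$-ambit recalled in Section~\ref{subsection: universal G-ambit}, namely $(G, G^*/E_\mu^r, e/E_\mu^r)$. Concretely, I would work inside a single sufficiently saturated model that simultaneously expands $\mathcal M$ and the structure on $G$ in which all open sets are $\emptyset$-definable; since $G=\aut(M)$ carries the pointwise convergence topology and $\mathcal M$ has predicates for all subsets of all finite products of the sorts, one may take $\mathcal M^*=(G^*,M^*,\dots)$ itself and read off the group $\mu$ of infinitesimals as $\bigcap\{U^*:U\ni e\text{ open}\}$, which here is exactly $\{\sigma\in G^*:\sigma(\bar m)\equiv^L\bar m\text{ on every finite subtuple}\}$ — equivalently $\sigma(\bar m)\equiv^{\textrm{full}}_{\M}\bar m$ fails only via type over $\M$, so I must be a little careful and instead identify $\mu$ with the stabilizer of $\tp^L(\bar m)$ restricted to finite tuples. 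The key computation is: for $\sigma,\tau\in G^*$, we have $\sigma\,E_\mu^r\,\tau$ (equivalently $\sigma^{-1}\tau\in\mu$ after composing with $\equiv_{\mathcal M}$) if and only if $\tp^{\textrm{full}}(\sigma(\bar m))=\tp^{\textrm{full}}(\tau(\bar m))$. Granting this, the map $\sigma/E_\mu^r\mapsto \tp^{\textrm{full}}(\sigma(\bar m))$ is a well-defined bijection $G^*/E_\mu^r\to\Sigma^{\mathcal M}$.

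Next I would check that this bijection is a homeomorphism and intertwines the actions. For the topology: the logic topology on $G^*/E_\mu^r$ has closed sets those with type-definable (over $\mathcal M$) preimage, and the topology on $\Sigma^{\mathcal M}\subseteq S^{\textrm{full}}(\emptyset)$ is the usual Stone topology; one transports one to the other by noting that $[\varphi(\bar x)]\cap\Sigma^{\mathcal M}$ corresponds to $\{\sigma\in G^*:\mathcal M^*\models\varphi(\sigma(\bar m))\}$, a definable — hence type-definable — subset of $G^*$ invariant under $E_\mu^r$, and conversely every formula over $\mathcal M$ in the variable ranging over $G^*$ becomes, after substituting $\sigma(\bar m)$, a full-language formula in the type variable because $\bar m$ enumerates all of $M$ and all subsets of finite products of $M$ are $\emptyset$-definable in $\mathcal M$; this is where the richness of the full language is used essentially. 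For the action: by definition $\tp^{\textrm{full}}(\sigma(\bar m))\cdot g=\tp^{\textrm{full}}(\sigma(g(\bar m)))=\tp^{\textrm{full}}((\sigma g)(\bar m))$ (here $g\in G\subseteq G^*$ and the action of $G^*$ on $M^*$ extends that of $G$ on $M$), which matches the right action $(\sigma/E_\mu^r)g=(\sigma g)/E_\mu^r$; well-definedness of the action on $\Sigma^{\mathcal M}$ then follows from well-definedness of the bijection, since right multiplication by $g\in G$ preserves $E_\mu^r$. Finally $e/E_\mu^r\mapsto\tp^{\textrm{full}}(\bar m)$, so base points match, and universality/ambit-ness transport along the isomorphism.

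The main obstacle I anticipate is the key computation pinning down $E_\mu^r$ in terms of equality of full types, i.e.\ showing that $\sigma(\bar m)\equiv^{\textrm{full}}\tau(\bar m)$ is exactly the relation $\equiv_{\mathcal M}\circ\sim^r$ evaluated at $\sigma,\tau$. One direction is easy: if $\sigma(\bar m)\equiv^{\textrm{full}}\tau(\bar m)$ then in particular $\sigma(\bar m)\equiv^L\tau(\bar m)$, and one extracts an element of $\mu$ on the nose. The subtle direction requires unpacking that $\equiv_{\mathcal M}$ on $G^*$, composed with the infinitesimal relation $\sigma^{-1}\tau\in\mu$, collapses precisely to equality of $\tp^{\textrm{full}}(\sigma(\bar m))$ — one must use that $\sigma\mapsto\tp^{\textrm{full}}(\sigma(\bar m))$ already absorbs the parameters from $\mathcal M$ (all of which lie in $\dcl(\emptyset)$ in the full language) so that ``type over $\mathcal M$'' and ``type over $\emptyset$'' of the image coincide, reducing $E_\mu^r$ to $\sim^r$ on the image. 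Once this identification is clean, zero-dimensionality of $\Sigma^{\mathcal M}$ is immediate since it is a closed subspace of the Stone space $S^{\textrm{full}}(\emptyset)$, which is Boolean.
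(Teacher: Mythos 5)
Your overall route---transporting everything along the map $\sigma/E_\mu^r \mapsto \tp^{\textrm{full}}(\sigma(\bar m))$ from the model-theoretic universal ambit of Subsection \ref{subsection: universal G-ambit}---is the paper's, but the ``key computation'' you flag is exactly where your sketch breaks, because you misidentify the group of infinitesimals. Since $G=\aut(M)$ carries the pointwise convergence topology, a basis of open neighborhoods of $e$ is given by the pointwise stabilizers $U_{\bar a}=\{g\in G: g(\bar a)=\bar a\}$ of finite tuples $\bar a$ from $M$; each $U_{\bar a}$ is $\emptyset$-definable in $\mathcal M$ (the action is $\emptyset$-definable and all elements of $M$ lie in $\dcl(\emptyset)$), so $U_{\bar a}^*=\{\sigma\in G^*:\sigma(\bar a)=\bar a\}$ and hence $\mu=\{\sigma\in G^*:\sigma(\bar m)=\bar m\}$ is the \emph{exact} pointwise fixator of $M$, not the set of $\sigma$ with $\sigma(\bar a)\equiv^L\bar a$ for all finite $\bar a$ (your ``stabilizer of $\tp^L(\bar m)$ restricted to finite tuples''). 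The latter set is in general far bigger and gives the wrong equivalence relation: for $M=(\mathbb{Q},<)$ every $\sigma\in G^*$ preserves $L$-types of finite tuples from $M$, so your ``$\mu$'' would be all of $G^*$, your $E_\mu^r$ would be the total relation and the quotient a single point, whereas $\Sigma^{\mathcal M}$ is certainly not a point (already $\tp^{\textrm{full}}(g(\bar m))\neq\tp^{\textrm{full}}(\bar m)$ for any $g\in G$ moving some element of $M$, since singletons are predicates of the full language). So with your $\mu$ the claimed equivalence $\sigma\, E_\mu^r\, \tau\iff\tp^{\textrm{full}}(\sigma(\bar m))=\tp^{\textrm{full}}(\tau(\bar m))$ is false, and with the correct $\mu$ your argument for it does not go through as written.

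In fact, with the correct $\mu$ the direction you call easy is the one needing real work: from $\tp^{\textrm{full}}(\sigma(\bar m))=\tp^{\textrm{full}}(\tau(\bar m))$ you cannot ``extract an element of $\mu$ on the nose,'' since $\tau^{-1}\sigma$ need not fix $\bar m$. One must use strong $|M|^+$-homogeneity of $\mathcal M^*$ to find $f\in\aut(\mathcal M^*)$ fixing $\mathcal M$ pointwise with $f(\sigma(\bar m))=\tau(\bar m)$, observe that $f(\sigma(\bar m))=f(\sigma)(\bar m)$ because the action is $\emptyset$-definable and $f$ fixes $\bar m$, and conclude $\tau\sim^r f(\sigma)\equiv_{\mathcal M}\sigma$. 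This also shows you cannot ``reduce $E_\mu^r$ to $\sim^r$ on the image'': the factor $\equiv_{\mathcal M}$ is precisely what converts equality of types into literal equality of the tuples $\sigma(\bar m)$, and $\sim^r$ alone is strictly finer than equality of full types. The converse direction is the genuinely easy one: $f(\sigma)(\bar m)=\tau(\bar m)$ for some $f$ fixing $\mathcal M$ immediately yields equality of the full types. Once the claim is proved this way, the rest of your plan (continuity of the induced bijection, homeomorphism---which by the way is automatic by compactness, so your converse definability claim is not needed---equivariance, matching of base points, transport of universality, and zero-dimensionality from the Stone space) is correct and coincides with the paper's proof.
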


\begin{proof}
First, we check that $\cdot$ is well-defined.   Suppose $\tp^{\textrm{full}}(\sigma(\bar m)) =  \tp^{\textrm{full}}(\tau(\bar m))$ and $g \in G$ (where $\sigma,\tau \in G^*$). We need to show that  $\tp^{\textrm{full}}(\sigma(g(\bar m))) =  \tp^{\textrm{full}}(\tau(g(\bar m)))$. By strong $|M|^+$-homogeneity of $\mathcal{M}^*$, there is $f \in \aut(\mathcal{M}^*)$ such that $f(\sigma(\bar m)) = \tau(\bar m)$. Since $f$ fixes $\mathcal{M}$ pointwise and the action of $G$ on $M$ is $\emptyset$-definable in the full language, we get $f(\sigma)(\bar m)=f(\sigma)(f(\bar m))=f(\sigma(\bar m))= \tau(\bar m)$. Hence, $f(\sigma(g(\bar m))) = f(\sigma)(f(g)(f(\bar m))) = f(\sigma)(g(\bar m)) = \tau(g (\bar m))$, where the last equality follows from the previous sentence, as $g(\bar m)$ is a permutation of $\bar m$.

The fact that $\cdot$ is a right action is trivial. Next, let us check that $\cdot$ is continuous. 
Consider a basic clopen subset of $\Sigma^{\mathcal{M}}$, i.e. a subset of the form $[\varphi(\bar x)]:=\{ p \in \Sigma^{\mathcal{M}} : \varphi(\bar x) \in p\}$ for some formula $\varphi(\bar x)$ without parameters in the full language. The goal is to show that the set 
$$X:= \{ (q,g) \in \Sigma^{\mathcal{M}} \times G : \varphi(\bar x) \in q\cdot g\}$$
is open in the product topology. Although the tuple of variables $\bar x$ is infinite (corresponding to $\bar m$), the formula $\varphi(\bar x)$ uses only a finite subtuple $\bar x'$ of $\bar x$ corresponding to some finite subtuple $\bar a$ of $\bar m$.
Note that $\varphi(\bar x) \in q\cdot g$ if and only if there is $\sigma \in G^*$ such that $\varphi(\sigma(g(\bar a)))$ and $q=\tp^{\textrm{full}}(\sigma(\bar m))$. Hence, for any $q \in \Sigma^{\mathcal{M}}$ and $g \in G$, we see that $(q,g) \in X$ if and only if there is $\bar b$ in $M$ such that $g(\bar a) = \bar b$ and the formula $\psi_{\bar b}(\bar x, \bar b) := (\exists \sigma \textrm{ of sort } G)(\varphi(\sigma(\bar b)) \wedge \sigma(\bar b) =\bar x_{\bar b})$ belongs to $q$, where $\bar x_{\bar b}$ is the finite subtuple of $\bar x$ corresponding to the subtuple $\bar b$ of $\bar m$ (recall that $\bar b$ is in $\dcl(\emptyset)$ in the full language, so we can use it as parameters). Therefore, 
$$X= \bigcup_{\bar b \in G\bar a} [\psi_{\bar b}(\bar x, \bar b)] \times \{g \in G: g(\bar a)=\bar b\},$$ 
which is clearly open.

Note that $\tp^{\textrm{full}}(\bar m) \cdot G$ is dense in $\Sigma^{\mathcal{M}}$, as for any $\sigma \in G^*$ and $\varphi(\bar x) \in \tp^{\textrm{full}}(\sigma(\bar m))$, since $\mathcal{M} \prec \mathcal{M}^*$, we get that there is $g \in G$ with $\varphi(g(\bar m))$, but this means that $\varphi(\bar x) \in \tp^{\textrm{full}}(g(\bar m))= \tp^{\textrm{full}}(\bar m) \cdot g$.

So we have already proved that $(G,\Sigma^{\mathcal{M}},  \tp^{\textrm{full}}(\bar m))$ is a right $G$-ambit. To see that it is universal, it is enough to show that it is isomorphic to the universal right  $G$-ambit described as $G^*/E_\mu^r$ in Subsection \ref{subsection: universal G-ambit}.

Let $F \colon G^* \to \Sigma^{\mathcal{M}}$ be given by $F(\sigma) := \tp^{\textrm{full}}(\sigma(\bar m))$.\\

\begin{clm}
$F(\sigma)=F(\tau) \Longleftrightarrow \sigma E_\mu^r \tau$.
\end{clm}

\begin{proof}
$(\Rightarrow)$ Assume $F(\sigma)=F(\tau)$. Then there is $f \in \aut(\mathcal{M}^*)$ with $f(\sigma(\bar m))= \tau(\bar m)$. Since $f(\sigma(\bar m)) = f(\sigma)(f(\bar m))=f(\sigma)(\bar m)$, we conclude that $(\tau^{-1}f(\sigma))(\bar m)) = \bar m$. Since $\mu=\{\sigma \in G^*: \sigma(\bar m)=\bar m\}$, we obtain $\tau \sim^r f(\sigma) \equiv^{\textrm{full}}_{\mathcal{M}} \sigma$ which means that $\tau E_\mu^r \sigma$.\\
$(\Leftarrow)$ Assume $\sigma E_\mu^r \tau$. Then there is $f \in \aut(\mathcal{M}^*)$ with $f(\sigma) \sim^r \tau$. This exactly means that $f(\sigma)(\bar m) = \tau(\bar m)$, so $f(\sigma(\bar m)) = \tau(\bar m)$, and hence $\tp^{\textrm{full}}(\sigma(\bar m)) = \tp^{\textrm{full}}(\tau(\bar m))$.
\end{proof}

By the claim, $F$ induces a bijection $\tilde{F} \colon G^*/E_\mu^r \to \Sigma^{\mathcal{M}}$. $\tilde{F}$ is continuous, as $\tilde{F}^{-1}[[\varphi(\bar x)]] = \{ \sigma/E_\mu^r: \models \varphi(\sigma(\bar m))\}$ is closed in the logic topology (for any formula $\varphi(\bar x)$ without parameters in the full language). Moreover, for any $\sigma \in G^*$ and $g \in G$, $\tilde{F}((\sigma/E_\mu^r)  g)= \tilde{F}((\sigma g)/E_\mu^r) = \tp^{\textrm{full}}(\sigma(g(\bar m))) =  \tp^{\textrm{full}}(\sigma(\bar m)) \cdot g$. Also, $\tilde{F}(\id/E_\mu^r) = \tp^{\textrm{full}}(\bar m)$.

We have justified that $\tilde{F}$ is an isomorphism of right $G$-ambits.
\end{proof}

\begin{rem}
One could significantly shorten the above proof. Namely, everything follows from Claim 1 and the computations following it: $\cdot$ is well-defined, because the action of $G$ on $G^*/E_\mu^r$ is well-defined and $\tilde{F}$ maps the action of $G$ on $G^*/E_\mu^r$ to $\cdot$, and the fact that $(G, \Sigma^{\mathcal{M}},  \tp^{\textrm{full}}(\bar m))$ is a right $G$-ambit follows from the fact that $(G,G^*/E_\mu^r,\id/E_\mu^r)$ is and the observation that $\tilde{F}$ is a homeomorphism preserving the actions of $G$ and mapping $\id/E_\mu^r$ to $\tp^{\textrm{full}}(\bar m)$. 
\end{rem}

Nevertheless, we decided to include a direct proof of the fact that $(G, \Sigma^{\mathcal{M}},  \tp^{\textrm{full}}(\bar m))$ is a right $G$-ambit in order to show what is really going on here, and also because of the following remark (whose context generalizes the one from Theorem \ref{theorem: universal G-ambit}) which follows by almost the same (direct) proof. 

\begin{rem}\label{remark: ambit for other languages}
Let $\mathcal{M}'$ be a structure $(G,M,\dots)$ in a language $L'$ such that the action of $G$ on $M$ 
is $\emptyset$-definable in $\mathcal{M}'$. Let $\mathcal{M}'^* = (G^*,M^*,\cdot) \succ \mathcal{M}'$ be a monster model. Put $\Sigma^{\mathcal{M}'}:= \{\tp^{L'}(\sigma(\bar m)/M)): \sigma \in G^*\}$. Then $(G,\Sigma^{\mathcal{M}'}, \tp^{L'}(\bar m/M))$ is a right  $G$-ambit with the right action defined by $\tp^{L'}(\sigma(\bar m)/M) \cdot g := \tp^{L'}(\sigma(g(\bar m))/M)$.
\end{rem}

Since in this remark we work only over parameters from $M$ (and not from all of $\mathcal{M}'$), the computation in the proof of the fact that $\cdot$ is well-defined must be modified as follows: $f(\sigma(g(\bar m))) = f(\sigma)(f(g)(f(\bar m))) = f(\sigma)(f(g(\bar m))) = f(\sigma)(g(\bar m))=\tau(g (\bar m))$, as $g(\bar m)$ is contained in $M$ and $f$ fixes $M$ pointwise.

As an immediate corollary of Theorem \ref{theorem: universal G-ambit}, we get that the universal left $G$-ambit is also $(G,\Sigma^{\mathcal{M}},\tp^{\textrm{full}}(\bar m))$ with the left action given by 
$$g \cdot \tp^{\textrm{full}}(\sigma(\bar m)) :=  \tp^{\textrm{full}}(\sigma(g^{-1}(\bar m))).$$

An important aspect of \cite{Zu} was a presentation of the universal right $G$-ambit (working with a Fra\"{i}ss\'{e} structure) as a certain inverse limit. Here, we will see that this is exactly the obvious presentation of the type space in infinitely many variables as the inverse limit of type spaces in finitely many variables.

For any finite tuple $\bar a$ in $M$, put
$$\Sigma^{\mathcal{M}}_{\bar a}: = \{ \tp^{\textrm{full}}(\sigma(\bar a)): \sigma \in G^*\}.$$ 
In contrast with Theorem \ref{theorem: universal G-ambit} and the last remark, there is no obvious structure  of a right $G$-ambit on $\Sigma^{\mathcal{M}}_{\bar a}$. However, the following remark is clear.

\begin{rem}\label{remark: presentation as an inverse limit}
The restriction maps yield a homeomorphism $h \colon \Sigma^{\mathcal{M}} \to \underset{\bar a}{\varprojlim} \Sigma^{\mathcal{M}}_{\bar a}$. More generally, $\bar a$ can range over any given enumerations of the sets from a given cofinal family of finite subsets of $M$. 
\end{rem}

Via this homeomorphism we induce the structure of a right $G$-ambit on $\underset{\bar a}{\varprojlim} \Sigma^{\mathcal{M}}_{\bar a}$:
$$\langle \tp^{\textrm{full}}(\sigma(\bar a))\rangle_{\bar a} \cdot g := \langle \tp^{\textrm{full}}(\sigma(g(\bar a)))\rangle_{\bar a}.$$

In order to see that this is exactly the presentation from \cite[Section 6]{Zu}, we have to identify $\Sigma^{\mathcal{M}}_{\bar a}$ with some Stone-\v{C}ech compactifications considered in \cite{Zu}. 

For a finite $\bar a$ in $M$, let $A_{\bar a}$ be the orbit of $\bar a$ under $\aut(M)$. The proof of the next result is left as an exercise.

\begin{prop}\label{proposition: our inverse limit is the same as Zucker's}
\begin{enumerate}
\item $\beta A_{\bar a}$ is homeomorphic to $\Sigma^{\mathcal{M}}_{\bar a}$ via 
$f_{\bar a}$ given by $$f_{\bar a}(\mathcal{U}):= \{\varphi(\bar x) \textrm{ in the full language}: A_{\bar a} \cap \varphi(M) \in \mathcal{U}\};$$ the inverse map is given by $$f_{\bar a}^{-1}(\tp^{\textrm{full}}(\sigma(\bar a))) = \{ U \subseteq A_{\bar a}: U(x) \in \tp^{\textrm{full}}(\sigma(\bar a))\}.$$
\item $\underset{\bar a}{\varprojlim} f_{\bar a}$  is a homeomorphism from $\underset{\bar a}{\varprojlim} \beta A_{\bar a}$ to $\underset{\bar a}{\varprojlim} \Sigma^{\mathcal{M}}_{\bar a}$, where for $\bar a$ being a subtuple of $\bar b$ the bonding map from $\beta A_{\bar b}$ to $\beta A_{\bar a}$ is induced by the restriction map from $A_{\bar b}$ to $A_{\bar a}$. 
\end{enumerate}
\end{prop}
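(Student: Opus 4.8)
The plan is to verify both parts by a direct computation with the explicit formulas, using throughout that $\beta A_{\bar a}$ is the space of ultrafilters on the (discrete) set $A_{\bar a}$ and that the full language contains a predicate for every subset of every finite Cartesian power of $M$.

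For part (1): since $A_{\bar a}$ is a single $\aut(M)$-orbit and $\bar a$ is a \emph{finite} tuple of elements of $M$, hence a tuple of constants of $\mathcal{M}$, the sentence asserting that $A_{\bar a}$ is contained in $\{\sigma(\bar a):\sigma\in G\}$ holds in $\mathcal{M}$, hence in $\mathcal{M}^*$; together with the (equally elementary) reverse inclusion this gives $\{\sigma(\bar a):\sigma\in G^*\}=A_{\bar a}(\mathcal{M}^*)$. Thus $\Sigma^{\mathcal{M}}_{\bar a}$ is exactly the clopen set $[A_{\bar a}(\bar x')]$ in the full type space in the finite tuple of variables $\bar x'$ corresponding to $\bar a$; in particular it is compact Hausdorff. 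Every $p\in[A_{\bar a}(\bar x')]$ contains $A_{\bar a}(\bar x')$ and, the full language having a predicate for each subset of $M^{|\bar a|}$, is completely determined by the ultrafilter $\{U\subseteq A_{\bar a}: U(\bar x')\in p\}$ on $A_{\bar a}$; conversely, every ultrafilter $\mathcal{U}$ on $A_{\bar a}$ arises this way, because $\{U(\bar x'):U\in\mathcal{U}\}$ is finitely satisfiable in $\mathcal{M}$ (finite intersections of members of $\mathcal{U}$ are nonempty subsets of $A_{\bar a}$) and a realization $\bar c$ of it in $\mathcal{M}^*$ lies in $A_{\bar a}(\mathcal{M}^*)$ since $A_{\bar a}\in\mathcal{U}$, so $\bar c=\sigma(\bar a)$ for some $\sigma\in G^*$. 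These observations are precisely the two displayed formulas for $f_{\bar a}$ and $f_{\bar a}^{-1}$; a one-line check (using $A_{\bar a}\cap U(M)=U$ for $U\subseteq A_{\bar a}$) shows they are mutually inverse, and $f_{\bar a}$ is continuous because $f_{\bar a}^{-1}\bigl[[\varphi(\bar x)]\bigr]=\{\mathcal{U}: A_{\bar a}\cap\varphi(M)\in\mathcal{U}\}$ is basic clopen in $\beta A_{\bar a}$. A continuous bijection between compact Hausdorff spaces is a homeomorphism.

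For part (2): the coordinate-restriction map $r\colon A_{\bar b}\to A_{\bar a}$ (for $\bar a$ a subtuple of $\bar b$) is well defined, since restricting $g(\bar b)$ to the $\bar a$-coordinates yields $g(\bar a)$; by definition $\beta r$, with $\beta r(\mathcal{U})=\{S\subseteq A_{\bar a}: r^{-1}(S)\in\mathcal{U}\}$, is the bonding map of the $\beta A$-system. It remains to check that the $f_{\bar a}$ form a morphism of inverse systems, i.e. that $\mathrm{res}\circ f_{\bar b}=f_{\bar a}\circ\beta r$, where $\mathrm{res}$ is the type-restriction bonding map on the $\Sigma$-side. Chasing an ultrafilter $\mathcal{U}\in\beta A_{\bar b}$ around both routes, and using that pulling a subset $S\subseteq A_{\bar a}$ back along $r$ corresponds on the level of predicates to adjoining dummy variables, one finds that both routes send $\mathcal{U}$ to the type $\{\psi(\bar x_{\bar a}): r^{-1}(A_{\bar a}\cap\psi(M))\in\mathcal{U}\}$, so the square commutes. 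Since each $f_{\bar a}$ is a homeomorphism and these maps are compatible with the bonding maps, $\varprojlim f_{\bar a}$ is a homeomorphism $\varprojlim\beta A_{\bar a}\to\varprojlim\Sigma^{\mathcal{M}}_{\bar a}$ compatible with the bonding maps (its inverse being $\varprojlim f_{\bar a}^{-1}$, which is a morphism of inverse systems for the same reason). I do not expect a genuine obstacle; the only points requiring a little care are the identification $\{\sigma(\bar a):\sigma\in G^*\}=A_{\bar a}(\mathcal{M}^*)$ and the bookkeeping in the commuting square.
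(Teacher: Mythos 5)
Your proof is correct; the paper in fact leaves this proposition as an exercise, and your argument is exactly the intended routine verification (identify $\Sigma^{\mathcal{M}}_{\bar a}$ with the clopen set $[A_{\bar a}(\bar x')]$ using elementarity and saturation, use that every full-language formula is equivalent over $\emptyset$ to the predicate naming its $\mathcal{M}$-definable set so that types correspond to trace ultrafilters on $A_{\bar a}$, and check the commuting square with the restriction bonding maps). No gaps worth flagging.
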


We can now induce a structure of a right $G$-ambit on $\underset{\bar a}{\varprojlim} \beta A_{\bar a}$  via  $\underset{\bar a}{\varprojlim} f_{\bar a}$.

Recall that the Stone-\v{C}ech compactification $\beta G$ of $G$ treated as a discrete group is also the right universal $G$-ambit for $G$ treated as a discrete group, with the right action of $G$ on $\beta G$ given by right translation and with the distinguished point being the principal ultrafilter $[e]:= \{ U \subseteq G: e \in U\}$ (see pages 118-119 in \cite{Au}).
Let $\tilde{f} \colon \beta G \to \Sigma^{\mathcal{M}}$ be the unique continuous extension of the map $f \colon G \to \Sigma^{\mathcal{M}}$ given by $f(g):= \tp^{\textrm{full}}(g(\bar m))$; this is exactly the unique epimorphism of right $G$-ambits (for $G$ treated as a discrete group) from $(G, \beta G, [e])$ to $(G, \Sigma^{\mathcal{M}}, \tp^{\textrm{full}}(\bar m))$. (One can check that for any basic clopen subset of $\beta G$ of the form $[U]:=\{ \mathcal{U} \in \beta G : U \in \mathcal{U}\}$ (where $U \subseteq G$), $\tilde{f}[[U]] = \{ \tp^{\textrm{full}}(\sigma(\bar m)): \sigma \in U^*\}$, but we will not use it.)


By the above comments and remarks, we have the following sequence of empimorphisms of right $G$-ambits (the first ambit is for $G$ treated as a discrete group), where $j:= (\underset{\bar a}{\varprojlim} f_{\bar a})^{-1}$.

\begin{equation}
			\beta G \xtwoheadrightarrow{\tilde{f}}{} \Sigma^{\mathcal{M}} \xtwoheadrightarrow{h}{} \underset{\bar a}{\varprojlim} \Sigma^{\mathcal{M}}_{\bar a} \xtwoheadrightarrow{j}{} \underset{\bar a}{\varprojlim} \beta A_{\bar a}.
		\end{equation}
So we see that the right $G$-action on $\underset{\bar a}{\varprojlim} \beta A_{\bar a}$ induced via  $j$ is also induced from the right $G$-action on $\beta G$ via $j \circ h \circ \tilde{f}$.

For $g \in G$ and $[g]:=\{U \subseteq G: g \in U\}$ we have:
\begin{equation}
			[g] \xrightarrow{\tilde{f}}{} \tp^{\textrm{full}}(g(\bar m)) \xrightarrow{h}{} \langle \tp^{\textrm{full}}(g(\bar a)) \rangle_{\bar a} \xrightarrow{j}{}  \langle [g(\bar a)] \rangle_{\bar a},
\end{equation}
so $j \circ h \circ \tilde{f} \colon \beta G \to  \underset{\bar a}{\varprojlim} \beta A_{\bar a}$ is the unique continuous map extending the map $G \to  \underset{\bar a}{\varprojlim} \beta A_{\bar a}$ given by $[g] \mapsto  \langle [g(\bar a)] \rangle_{\bar a}$.

Now, if $M$ is ultrahomogeneous, then for any finite $\bar a$ from $M$ the elements of the orbit $A_{\bar a}$ are exactly all the tuples in $M$ with the same qf-type as $\bar a$, so they can be identified with the embeddings of $\bar a$ into $M$. And so, in the case of Fra\"{i}ss\'{e} structures,  the presentation of the universal right $G$-ambit as $\underset{\bar a}{\varprojlim} \beta A_{\bar a}$ coincides with the presentation from \cite[Section 6]{Zu}. But here we do not assume that $M$ is countable and instead of all initial finite subtuples of $M$ we can range over any cofinal family of finite subtuples. Also, ultrahomogeneity is not needed, but we can always assume  it anyway it by considering the canonical expansion of $M$ mentioned at the end of Subsection \ref{subsection: model theory}. 

The point of the above discussion is that model-theoretically the presentation of $\Sigma^{\mathcal{M}}$ as $\underset{\bar a}{\varprojlim} \Sigma^{\mathcal{M}}_{\bar a}$ is straightforward, and we will use it in Section \ref{section: metrizability}
 to give a rather quick proof of metrizability theorem from \cite[Section 8]{Zu}.

\section{Extreme amenability}\label{section: extreme amenability}

In this section, we give a quick proof of Fact \ref{fact: extreme amenability}, based on our description of the universal right $G$-ambit from Theorem \ref{theorem: universal G-ambit}. In fact, our proof works more generally for automorphism groups of arbitrary (possibly uncountable) structures. We take the notation and terminology from Subsection \ref{section: structure Ramsey theory for arbitrary structures} and Section \ref{section: universal ambit}.

\begin{rem}\label{remark: C can be replaced by M}
A structure $M$ has the ERP if and only if for any finite tuple $\bar a$ from $M$ and a finite set $B \subseteq M$ containing $\bar a$, for any $r \in \omega$, for every coloring $c\colon {M \choose \bar a} \to r$, there is $B' \in {M \choose B}$ such that ${B' \choose \bar a}$ is monochromatic with respect to $c$.
\end{rem}

\begin{proof}
$(\rightarrow)$ is trivial. For the other direction, suppose for a contradiction that for some $\bar a$, $B$ and $r$ as above, for every finite $C \subseteq M$ containing $B$, the set $K_C$ of colorings ${C \choose \bar a} \to r$ such that for no $B' \in {C \choose B}$ the set ${B' \choose \bar a}$ is monochromatic is non-empty. Clearly each $K_C$ is finite, and for $C \subseteq C'$  there is a map $K_{C'} \to K_C$ induced by the restriction of the domains of the colorings. So we get a non-empty, profinite space $\underset{C}{\varprojlim} K_C$. Take $\eta \in \underset{C}{\varprojlim} K_C$, and define
$c \colon {M \choose  \bar a} \to r$ by 
$$c(\bar a'): = \eta(C)(\bar a')$$ 
for any finite $C \subseteq M$ containing $B$ and $\bar a'$. It is clear that $c$ is well-defined. We claim that $c$ contradicts the right hand side of the remark, more precisely for no $B' \in {M \choose  B}$ the set ${B' \choose \bar a}$ is monochromatic with respect to $c$. Indeed, for such a $B'$ we can take a finite  superset $C \subseteq M$ of $B \cup B'$, and the conclusion follows from the fact that $c \!\upharpoonright \!{C \choose \bar a} = \eta(C) \in K_C$.
\end{proof}

\begin{thm}\label{theorem: extreme amenability}
Let $M$ be an arbitrary first order structure. The following conditions are equivalent.
\begin{enumerate}
\item $G:=\aut(M)$ is extremely amenable as a topological group.
\item $M$ has the ERP.
\end{enumerate}
\end{thm}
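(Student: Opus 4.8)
The plan is to use the type-space presentation of the universal right $G$-ambit from Theorem \ref{theorem: universal G-ambit}, namely $(G,\Sigma^{\mathcal{M}},\tp^{\textrm{full}}(\bar m))$, together with the standard fact that $G$ is extremely amenable iff the universal right $G$-ambit has a $G$-fixed point. By Remark \ref{remark: equivalence of various definitions of ERP}, recall that ERP for $M$ can be tested on any cofinal family, and by Remark \ref{remark: C can be replaced by M} we may color embeddings into all of $M$. So the task reduces to: (1) a fixed point in $\Sigma^{\mathcal{M}}$ yields the combinatorial ERP statement, and (2) the ERP produces a $G$-fixed point in $\Sigma^{\mathcal{M}}$.

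For $(1)\Rightarrow(2)$, suppose $p \in \Sigma^{\mathcal{M}}$ is $G$-fixed, i.e. $p \cdot g = p$ for all $g \in G$; write $p = \tp^{\textrm{full}}(\sigma(\bar m))$ for some $\sigma \in G^*$. Given a finite tuple $\bar a$ from $M$, a finite $B \subseteq M$ containing $\bar a$, and a coloring $c\colon {M \choose \bar a} \to r$, the key observation is that $c$ is (coded by) a $\emptyset$-definable subset of $M$ in the full language (as is any subset of a Cartesian power of $M$), so the partition of ${M \choose \bar a}$ into color classes extends to $\mathcal{M}^*$, and $\sigma(\bar a)$ has some definite color, say color $i$, meaning the sentence "$\bar x_{\bar a}$ lies in the $i$-th color class" is in $p$. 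Now enumerate $B$ as $\bar b$ (extending $\bar a$); since $p$ is $G$-fixed, for every $g \in G$ we have $\tp^{\textrm{full}}(\sigma(g(\bar b))) = \tp^{\textrm{full}}(\sigma(\bar b))$, so in particular $\sigma(g(\bar a'))$ has color $i$ for every $\bar a' \in {B \choose \bar a}$ and every $g\in G$. Translating this statement from $\mathcal{M}^*$ back into $\mathcal{M}$ via elementarity: for some finite $C\subseteq M$ there is $B' = g[B] \in {M \choose B}$ (in fact one can realize $\tp^{\textrm{full}}(\sigma\restriction$ the relevant finite part$)$ inside $M$ by $\aleph_0$-saturation / elementarity) such that every element of ${B' \choose \bar a}$ gets color $i$ under $c$; this is exactly monochromaticity. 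One must be slightly careful: the right object to use is that $q := \tp^{\textrm{full}}(\bar m)$ lies in the closure of $\{q \cdot g : g\in G\} = \{\tp^{\textrm{full}}(g(\bar m)) : g \in G\}$ and $p$ is a fixed point of the action, so one extracts the monochromatic copy of $B$ using density of the $G$-orbit of $\tp^{\textrm{full}}(\bar m)$ in $\Sigma^{\mathcal{M}}$ and the fact that the clopen set "color of $\bar a$-subtuples is $i$ on all of ${B \choose \bar a}$" is $G$-invariant and contains $p$.

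For $(2)\Rightarrow(1)$, assume $M$ has the ERP and seek a $G$-fixed point in $\Sigma^{\mathcal{M}}$. The natural strategy is a compactness argument: the set of formulas (in the full language, over $\mathcal{M}$) of the form "for the coloring $c$ and finite $B$, the $\bar a$-part of $\bar x$ realizes the unique color class that is forced monochromatic on a copy of $B$" should be shown to be finitely satisfiable, hence realized by some $\sigma(\bar m)$ with $\sigma \in G^*$, and one checks that the resulting $p = \tp^{\textrm{full}}(\sigma(\bar m))$ is $G$-fixed. Concretely: a point $p\in \Sigma^{\mathcal{M}}$ is $G$-fixed iff for every finite tuple $\bar a$ from $M$ and every $\emptyset$-definable (full-language) coloring $c$ of ${M \choose \bar a}$, all elements of the orbit $A_{\bar a}$ appearing as $g(\bar a)$ for $g\in G$, when "seen through $\sigma$", get the same $c$-color. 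Using the ERP (in the "colorings of ${M \choose \bar a}$" form of Remark \ref{remark: C can be replaced by M}), given finitely many such data $(\bar a_1, c_1, B_1), \dots, (\bar a_k,c_k,B_k)$ one amalgamates them (passing to a common finite tuple enumerating a large enough finite subset, colored by the tuple of the $c_j$'s) and applies ERP once to find a finite $C\subseteq M$ and a single copy $B'$ inside $C$ that is monochromatic for the combined coloring; this witnesses finite satisfiability of the relevant partial type, and compactness (i.e. $\aleph_0$-saturation of $\mathcal{M}^*$) gives $\sigma \in G^*$ realizing it, with $p := \tp^{\textrm{full}}(\sigma(\bar m))$ the desired fixed point.

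The main obstacle I expect is bookkeeping in $(2)\Rightarrow(1)$: making precise which partial type in the full language over $\mathcal{M}$ exactly encodes "$G$-fixed point", and verifying that finite satisfiability of this type is precisely what a single application of the ERP (to an amalgamated coloring, after reducing to a common finite tuple via cofinality as in Remark \ref{remark: equivalence of various definitions of ERP}) delivers — in particular handling the fact that the monochromatic color may differ between the finitely many coordinates and that we need the $G$-translates $g(\bar a)$, not arbitrary tuples of the right qf-type, to be absorbed (which is where we use that $A_{\bar a}$ is a single $\aut(M)$-orbit and, harmlessly, that we may assume $M$ ultrahomogeneous). Once the encoding is pinned down, both directions are short.
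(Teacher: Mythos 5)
Your proposal is correct and follows essentially the same route as the paper: both use the type-space presentation $(G,\Sigma^{\mathcal{M}},\tp^{\textrm{full}}(\bar m))$ of the universal right ambit, identify colorings of ${M \choose \bar a}$ with full-language formulas, and pass between a $G$-fixed type $\tp^{\textrm{full}}(\sigma(\bar m))$ and the combinatorial statement via elementarity in one direction and saturation/compactness (your ``amalgamated coloring'' is the paper's $2^r$-valued coloring verifying condition (iv)) in the other. The only cosmetic point is that in the direction ``extremely amenable $\Rightarrow$ ERP'' you do not need (and should not claim) $G$-invariance of the relevant clopen set: density of the orbit of $\tp^{\textrm{full}}(\bar m)$, or equivalently the elementarity argument you also give, already yields the monochromatic copy of $B$ inside $M$, exactly as in the paper.
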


\begin{proof}
 The following conditions are equivalent.
\begin{enumerate}
\item[(i)] $G$ is extremely amenable.
\item[(ii)] There is $p \in \Sigma^{\mathcal{M}}$ such that $p \cdot G = \{p\}$.
\item[(iii)] There is $\sigma \in G^*$ such that for every $g \in G$ one has $\tp^{\textrm{full}}(\sigma(g(\bar m))) = \tp^{\textrm{full}}(\sigma(\bar m))$.
\item[(iv)] For every finite tuple $\bar a$ from $M$, for all natural numbers $n,r$, for every $g_0,\dots,g_{n-1}\in G$, for all formulas $\varphi_0(\bar x), \dots, \varphi_{r-1}(\bar x)$ of the full language (with $\bar x$ corresponding to $\bar a$), there exists $\sigma \in G$ such that
$$\bigwedge_{i <r} \bigwedge_{j < n} (\varphi_i(\sigma(g_j(\bar a))) \leftrightarrow \varphi_i(\sigma(\bar a))).$$
\end{enumerate}

The equivalence of (i) and (ii) follows from Theorem \ref{theorem: universal G-ambit}; (ii) $ \leftrightarrow$ (iii) is trivial; the equivalence of (iii) and (iv) follows from $|M|^+$-saturation of $\mathcal{M}$.

$(2) \rightarrow (1)$. We will show that (iv) holds.  So take data as in (iv). Consider the coloring $c \colon {M \choose \bar a} \to 2^r$ given by
$$c(\bar a')(i) := \left\{
\begin{array}{ll}
1 & \textrm{if} \models \varphi_i(\bar a')\\
0 & \textrm{if} \models \neg\varphi_i(\bar a')
\end{array}
\right.
$$
for  $i \in \{ 0,\dots, r-1\}$. Choose a finite $B \subseteq M$ so that $\bar a, g_0(\bar a),\dots,g_{n-1}(\bar a)$ are all contained in $B$. By the ERP, there is $B' \in {M \choose B}$ such that ${B' \choose \bar a}$ is monochromatic. But this implies that for $\sigma \in G$ such that $\sigma[B] = B'$ the conclusion of (iv) holds (because the tuples $\sigma(\bar a), \sigma(g_0(\bar a)),\dots, \sigma(g_{n-1}(\bar a))$ all belong to ${B' \choose \bar a}$).

$(1) \rightarrow (2)$. Consider any finite tuple $\bar a$ from $M$ and any finite $B \subseteq M$ containing $\bar a$. Let $c \colon {M \choose \bar a} \to r$ be a coloring. The fibers of this coloring are subsets of $M^{|\bar a|}$, so they are defined by formulas (in fact predicates) $\varphi_0(\bar x), \dots, \varphi_{r-1}(\bar x)$ of the full language. Let $g_0(\bar a),\dots,g_{n-1}(\bar a)$ be all elements of ${B \choose \bar a}$ (where the $g_i$'s are from $G$). Using (iv), we get $\sigma \in G$ for which ${\sigma[B] \choose \bar a}$ is monochromatic. So we are done by Remark \ref{remark: C can be replaced by M}.
\end{proof}

\section{Amenability}\label{section: amenability}

In this section, we will reprove Moore's theorem \cite{Mo} characterizing amenability of the groups of automorphisms of Fra\"{i}ss\'e structures via the convex Ramsey property of the underlying Fra\"{i}ss\'e classes. We extend the context to arbitrary structures, and also notice that the condition $\epsilon>0$ from the definition of the convex Ramsey property can be replaced by $\epsilon =0$.

There are several equivalent definitions of a Fra\"{i}ss\'e class to have the convex Ramsey property (which we prefer to call the embedding convex Ramsey property), e.g. items (1)-(5) in \cite[Theorem 7.1]{Mo}.
We choose one of them (more precisely, our choice is in between the equivalent conditions from items (2) and (5) of \cite[Theorem 7.1]{Mo}, so it is equivalent to them) and generalize it to cofinal families of finite subsets of an arbitrary structure.

If $B \subseteq C$ are subsets of a structure $M$ and $\bar b$ is an enumeration of $B$, by $\left\langle \begin{smallmatrix} C \\ \bar b \end{smallmatrix} \right\rangle$ we denote the affine combinations of copies of $\bar b$ in $C$, i.e. the set of all $\lambda_1 \bar b_1 + \cdots + \lambda_k \bar b_k$, where $k \in \omega \setminus \{0\}$, $\bar b_i \in { C \choose \bar b}$, and $\lambda _1,\dots, \lambda_k \in [0,1]$ with $\lambda_1 + \dots + \lambda_k =1$. When $\bar a$ is a tuple in $B$, $\bar a' \in {B \choose \bar a}$, and $v=\lambda_1 \bar b_1 + \cdots + \lambda_k \bar b_k \in \left\langle \begin{smallmatrix} C \\ \bar b \end{smallmatrix} \right\rangle$, then $v \circ \bar a':= \lambda_1 \bar a_1' + \dots + \lambda_k \bar a_k' \in \left\langle \begin{smallmatrix} C \\ \bar a \end{smallmatrix} \right\rangle$, where for $\sigma_i \in \aut(M)$ with $\sigma_i(\bar b) = \bar b_i$ we put $\bar a_i' = \sigma_i(\bar a')$. Finally, if $c \colon  {C \choose \bar a} \to 2^r = \{0,1\}^r$, then we define $c(v \circ \bar a'):= \lambda_1c(\bar a_1') + \dots + \lambda_kc(\bar a_k') \in [0,1]^r$.

\begin{dfn}\label{definition: ECRP}
\begin{enumerate}
\item A structure $M$ has the {\em embedding convex Ramsey property} (ECRP) if for every $\epsilon >0$, for any finite tuple $\bar a$ from $M$ and any enumeration $\bar b$ of a finite set $B\subseteq M$ containing $\bar a$, for any $r \in \omega$, there is a finite $C \subseteq M$ containing $B$ such that for every coloring $c \colon {C \choose \bar a} \to 2^r$ there is $v \in \left\langle \begin{smallmatrix} C \\ \bar b \end{smallmatrix} \right\rangle$ such that for every $\bar a', \bar a'' \in {B \choose \bar a}$ one has $|c(v \circ \bar a')- c(v \circ \bar a'')|_{\textrm{sup}} \leq \epsilon$, where $|x|_{\textrm{sup}}$ is the supremum norm on $[0,1]^r$. 

Let us say that $M$ has the {\em strong ECRP} if the definition holds with $\epsilon =0$.
\item A cofinal family $\mathcal{A}$ of finite subsets of a structure $M$ has the {\em embedding convex Ramsey property} (ECRP) if for every $\epsilon >0$, for any $\bar a$ enumerating a member of $\mathcal{A}$ and any $\bar b$ enumerating a member $B \supseteq A$ of $\mathcal{A}$, for any $r \in \omega$, there is $C \in \mathcal{A}$ containing $B$ such that for every coloring $c \colon {C \choose \bar a} \to 2^r$ there is $v \in \left\langle \begin{smallmatrix} C \\ \bar b \end{smallmatrix} \right\rangle$ such that for every $\bar a', \bar a'' \in {B \choose \bar a}$ one has $|c(v \circ \bar a')- c(v \circ \bar a'')|_{\textrm{sup}} \leq \epsilon$

Let us say that $\mathcal{A}$ has the {\em strong ECRP} if the definition holds with $\epsilon =0$.
\end{enumerate}
\end{dfn}

In Definition \ref{definition: ECRP}, it is equivalent to consider $\bar a$ with repetitions allowed or without.
Arguing as in the proof of Remark \ref{remark: equivalence of various definitions of ERP}, one gets that Remark \ref{remark: equivalence of various definitions of ERP} holds with ERP replaced by the [strong] ECRP. 

By the same argument as in Remark \ref{remark: C can be replaced by M}, we get

\begin{rem}\label{remark: M in place of C for amenability}
In Definition \ref{definition: ECRP}, the part ``there is a finite $C \subseteq M$ containing $B$'' [``there is $C \in \mathcal{A}$ containing $B$''] can be removed and then, in the rest of the statement, $C$ should be replaced by $M$.
\end{rem}

\begin{thm}
Let $M$ be an arbitrary first order structure. The following conditions are equivalent.
\begin{enumerate}
\item $G:=\aut(M)$ is amenable as a topological group.
\item $M$ has the ECRP.
\item  $M$ has the strong ECRP.
\end{enumerate}
\end{thm}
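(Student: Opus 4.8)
The plan is to prove the cycle $(1)\Rightarrow(3)\Rightarrow(2)\Rightarrow(1)$, using the model-theoretic description of the universal right $G$-ambit $(G,\Sigma^{\mathcal M},\tp^{\textrm{full}}(\bar m))$ from Theorem~\ref{theorem: universal G-ambit} as the main tool, exactly paralleling the structure of the proof of Theorem~\ref{theorem: extreme amenability}. The implication $(3)\Rightarrow(2)$ is trivial since the strong ECRP ($\epsilon=0$) is a special case of the ECRP. So the work is in $(1)\Rightarrow(3)$ and $(2)\Rightarrow(1)$.

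For $(1)\Rightarrow(3)$: amenability of $G$ means there is a $G$-invariant finitely additive probability measure $\nu$ on the universal right $G$-ambit $\Sigma^{\mathcal M}$, equivalently on the Boolean algebra of clopen sets $[\varphi(\bar x)]$ for $\varphi$ in the full language. First I would translate this into a ``measure on types'' statement: for each finite tuple $\bar a$ from $M$ (with corresponding variables $\bar x$) we get a finitely additive probability measure $\nu_{\bar a}$ on the algebra of subsets of $\Sigma^{\mathcal M}_{\bar a}$ generated by the predicates, and $G$-invariance of $\nu$ says that $\nu_{\bar a}(\varphi(\sigma(g(\bar a))))=\nu_{\bar a}(\varphi(\sigma(\bar a)))$ for all $g\in G$ — i.e., after the ``nonstandard shift by $\sigma$'' the measure does not see the $G$-action. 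Unwinding via $|M|^+$-saturation of $\mathcal M^*$, and the fact (Remark~\ref{remark: presentation as an inverse limit}) that $\Sigma^{\mathcal M}_{\bar a}\cong\beta A_{\bar a}$ identifies points with ultrafilters on the orbit $A_{\bar a}={B\choose\bar a}$-type data, one should arrive at: there is a finitely additive probability measure on the subsets of ${M\choose\bar a}$ which is invariant under the partial action coming from $G$, and moreover (by the nonstandard realization) arbitrarily good finite convex approximations to it exist. The precise combinatorial output one wants is: for any $\bar a$, any finite $B\supseteq\bar a$, any $r$, and any $c\colon{M\choose\bar a}\to 2^r$, there is a convex combination $v$ of copies of $\bar b$ in $M$ with $c(v\circ\bar a')$ independent of $\bar a'\in{B\choose\bar a}$ — this is the strong ECRP via Remark~\ref{remark: M in place of C for amenability}. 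The way to extract $v$ from the measure is a standard averaging/Følner-style argument: the $G$-invariant measure $\nu$ on $\Sigma^{\mathcal M}$ can be weak-$*$ approximated by finitely supported probability measures $\mu_n$ on $G$ (this is precisely the content of amenability of the discrete-to-topological passage through $\beta G\twoheadrightarrow\Sigma^{\mathcal M}$); each $\mu_n$ is a convex combination $\sum\lambda_i[\sigma_i]$, and $v:=\sum\lambda_i\,\sigma_i(\bar b)$ does the job for a sufficiently fine approximation, with the $\epsilon=0$ sharpening coming from compactness: if for each $\epsilon>0$ there is such a $v$ with error $\le\epsilon$, a limit/compactness argument on the finite-dimensional polytope of convex combinations with bounded denominator — or rather passing to $\Sigma^{\mathcal M}$ and using that the set of measures realizing the constraint is closed — yields one with error $0$. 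This last $\epsilon\to 0$ step, reconciling it with the quantifier structure of the ECRP (the $C$ is chosen before $c$, but via Remark~\ref{remark: M in place of C for amenability} we work with $M$ directly, which removes this issue), is where I expect the only real subtlety.

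For $(2)\Rightarrow(1)$: this is the more elementary direction and mirrors $(2)\Rightarrow(1)$ of Theorem~\ref{theorem: extreme amenability}. Amenability of $G$ is equivalent to the existence of a $G$-invariant finitely additive probability measure on $\Sigma^{\mathcal M}$, and by the inverse limit presentation it suffices to produce, compatibly, for each finite $\bar a$ a $G$-invariant (in the above sense) finitely additive probability measure $\nu_{\bar a}$ on the algebra generated by full-language predicates on $\Sigma^{\mathcal M}_{\bar a}$. By saturation this reduces to a finitary statement: for every $\bar a$, $n$, $r$, every $g_0,\dots,g_{n-1}\in G$ and every predicates $\varphi_0,\dots,\varphi_{r-1}$ on $\bar x$, there should be a finitely supported probability distribution — concretely, a $v\in\langle\begin{smallmatrix}M\\\bar b\end{smallmatrix}\rangle$ — such that the ``$\varphi_i$-averages'' agree along $g_0(\bar a),\dots,g_{n-1}(\bar a)$; this is exactly what the strong ECRP gives after choosing $B$ to contain $\bar a$ and all $g_j(\bar a)$ and taking the coloring $c(\bar a')(i)=1\iff\ \models\varphi_i(\bar a')$, then using Remark~\ref{remark: M in place of C for amenability}. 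One then assembles these finitary pieces into an actual invariant measure by a standard compactness argument over the space of finitely additive measures (or via a nonstandard/ultrafilter realization inside $\mathcal M^*$, taking standard parts), which matches the paper's stated preference for model-theoretic tools; the needed coherence between the $\nu_{\bar a}$'s for $\bar a$ a subtuple of $\bar b$ is automatic from how the colorings restrict.

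I expect the genuinely delicate point to be the bookkeeping in $(1)\Rightarrow(3)$ that turns a $G$-invariant measure into honest finite convex witnesses with error \emph{exactly} $0$: one must be careful that ``shifting by the nonstandard $\sigma$'' does not lose the finitary, convex-combination form, and that the passage from an invariant measure to a Følner-type averaging sequence of finitely supported measures on $G$ is legitimate for this (not necessarily locally compact, not necessarily discrete) topological group $G=\aut(M)$ — here the right framework is to run the argument on $\beta G\twoheadrightarrow\Sigma^{\mathcal M}$, where $G$ is discrete and classical amenability-of-discrete-groups averaging applies, and then push forward. Everything else is routine translation between (a) invariant measures on the universal ambit, (b) invariant ``Keisler measures'' on the relevant type spaces over $M$ in the full language, and (c) the convex Ramsey statement, with Remarks~\ref{remark: equivalence of various definitions of ERP}, \ref{remark: C can be replaced by M}, and \ref{remark: M in place of C for amenability} handling the passage from ``some $C$'' to ``$M$ itself'' and from tuples-with-repetitions to tuples-without.
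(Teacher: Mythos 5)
Your overall scheme (translate everything to the universal right $G$-ambit $\Sigma^{\mathcal M}$, encode colorings by predicates of the full language, and match convex combinations of copies of $\bar b$ in $M$ with convex combinations $\lambda_1\tp^{\textrm{full}}(h_1(\bar m))+\dots+\lambda_k\tp^{\textrm{full}}(h_k(\bar m))$ of realized types) is exactly the paper's, and your $(2)\Rightarrow(1)$ is essentially the paper's Claim 1 (ii)$\Rightarrow$(i) plus its Claim 2 computation; the only slip there is that you invoke the \emph{strong} ECRP when at that stage you only have the ECRP, but the compactness argument over the space of finitely additive measures on the clopen algebra absorbs the $\epsilon>0$ error, so that is cosmetic.

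The genuine gap is in $(1)\Rightarrow(3)$, precisely at the $\epsilon=0$ step you yourself flag. Neither of your two strategies works as stated. Weak-$*$ approximation of the invariant measure $\nu$ by finitely supported measures on the orbit of $\tp^{\textrm{full}}(\bar m)$ is just density of that orbit (no F{\o}lner-type input is needed or available), and it only yields error $\leq\epsilon$, i.e.\ the ECRP, not the strong ECRP. And the proposed limit/compactness sharpening fails because the relevant set is not closed in the right way: a weak-$*$ limit of convex combinations of point masses at \emph{realized} types $\tp^{\textrm{full}}(h(\bar m))$, $h\in G$, need only be supported on arbitrary types $\tp^{\textrm{full}}(\sigma(\bar m))$, $\sigma\in G^*$ (the orbit is dense, not closed, in $\Sigma^{\mathcal M}$), and such a limit gives no element $v$ of $\left\langle \begin{smallmatrix} M \\ \bar b \end{smallmatrix} \right\rangle$, which is what the coloring $c$ on ${M \choose \bar a}$ requires; bounding denominators of the weights does not help since it is the supports, not the weights, that escape. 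The missing idea is the paper's exact-matching trick: given the finitely many clopen sets $F_0,\dots,F_{nr-1}$ of the form $[\varphi_i(\bar x)]\cdot g_j^{-1}$, pass to the atoms $B_1,\dots,B_k$ of the finite Boolean algebra they generate, pick $h_s\in G$ with $\tp^{\textrm{full}}(h_s(\bar m))\in B_s$ for each nonempty atom (possible because every nonempty clopen set meets the dense orbit, i.e.\ by $\mathcal M\prec\mathcal M^*$), and set $\lambda_s:=\nu(B_s)$. The resulting finitely supported $\mu$ agrees with $\nu$ \emph{exactly} on every $F_j$, so $G$-invariance of $\nu$ gives differences equal to $0$, and $v:=\lambda_1 h_1(\bar b)+\dots+\lambda_k h_k(\bar b)$ witnesses the strong ECRP via the Claim 2 identity and Remark \ref{remark: M in place of C for amenability}. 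Without this (or an equivalent device for re-anchoring the limiting measure on realized types), your $(1)\Rightarrow(3)$ only proves $(1)\Rightarrow(2)$.
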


\begin{proof}

\begin{clm}
The following conditions are equivalent.
\begin{enumerate}
\item[(i)] $G$ is amenable.
\item[(ii)] For every $\epsilon >0$, for every finite tuple $\bar a$ from $M$ without repetitions, for all $n,r\in \omega$, for all $g_0,\dots,g_{n-1}\in G$, for all formulas $\varphi_0(\bar x), \dots, \varphi_{r-1}(\bar x)$ of the full language (with $\bar x$ corresponding to $\bar a$), there exist $\lambda_1,\dots,\lambda_k \in [0,1]$ with $\lambda_1 + \dots + \lambda_k =1$ and $h_1,\dots, h_k \in G$ such that for 
$$\mu := \lambda_1 \tp^{\textrm{full}}(h_1(\bar m)) + \dots + \lambda_k \tp^{\textrm{full}}(h_k(\bar m))$$
for all $j_1,j_2 <n$ and $i<r$ one has $|\mu([\varphi_i(\bar x)] \cdot g_{j_1}^{-1}) - \mu([\varphi_i(\bar x)] \cdot g_{j_2}^{-1})| \leq \epsilon$, where $[\varphi_i(\bar x)]$ is the basic clopen set in $\Sigma^{\mathcal{M}}$ consisting of the types containing $\varphi_i(\bar x)$.
\item[(iii)] The same as in (ii) but with $\epsilon =0$.
\end{enumerate}
\end{clm}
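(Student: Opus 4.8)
The plan is to establish the equivalence (i) $\Leftrightarrow$ (ii) $\Leftrightarrow$ (iii) by translating amenability, via the model-theoretic description of the universal right $G$-ambit from Theorem \ref{theorem: universal G-ambit}, into a statement about invariant means, and then using the finitary approximation afforded by compactness and $|M|^+$-saturation of $\mathcal{M}^*$. Recall that $G$ is amenable iff there is a $G$-invariant finitely additive probability measure on the universal right $G$-ambit $(G,\Sigma^{\mathcal{M}},\tp^{\textrm{full}}(\bar m))$. Since $\Sigma^{\mathcal{M}}$ is zero-dimensional, such a measure is determined by its values on the clopen sets $[\varphi(\bar x)]$. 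The first step is to recall the standard fact (Riesz representation / duality between measures and means) that the existence of a $G$-invariant Borel probability measure on a $G$-flow $X$ is equivalent to the existence of a $G$-invariant mean on $C(X)$, and that for the \emph{universal} ambit, a $G$-invariant mean exists iff it can be weak-$*$ approximated by convex combinations $\lambda_1\delta_{x_1}+\dots+\lambda_k\delta_{x_k}$ of Dirac measures at points of the \emph{orbit} of the distinguished point $\tp^{\textrm{full}}(\bar m)$ which are ``almost invariant'' on any prescribed finite set of clopen sets and finitely many translations. This is where the $h_i(\bar m)$ come from: $\tp^{\textrm{full}}(h(\bar m)) = \tp^{\textrm{full}}(\bar m)\cdot h$ is a generic point of the orbit, so a convex combination of Dirac measures at such points is exactly a measure on the orbit, and the right action of $g$ on $\Sigma^{\mathcal{M}}$ pushes $[\varphi_i(\bar x)]$ forward to $[\varphi_i(\bar x)]\cdot g^{-1}$ (equivalently, pulls back along right multiplication by $g$), which explains the appearance of $\mu([\varphi_i(\bar x)]\cdot g_{j}^{-1})$.

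The second step is the reduction from global invariance to the finitary, $\epsilon$-version: a $G$-invariant measure exists iff for every finite family of clopen sets $[\varphi_i]$, every finite set of group elements $g_j$, and every $\epsilon>0$ there is a finitely-supported measure on the orbit that is $\epsilon$-invariant with respect to these data; this is the standard ``almost invariant $\Rightarrow$ invariant'' argument by taking a weak-$*$ limit point in the (compact) space of probability measures on $\Sigma^{\mathcal{M}}$, noting that the limit is genuinely $G$-invariant because $G$ acts on $\Sigma^{\mathcal{M}}$ through finitely many clopen-set configurations at a time (continuity of the action, Theorem \ref{theorem: universal G-ambit}). This gives (i) $\Leftrightarrow$ (ii). The restriction to a single finite tuple $\bar a$ and to formulas $\varphi_i(\bar x)$ in the finitely many variables $\bar x$ corresponding to $\bar a$ is justified by Remark \ref{remark: presentation as an inverse limit}: $\Sigma^{\mathcal{M}} = \varprojlim_{\bar a}\Sigma^{\mathcal{M}}_{\bar a}$, so a measure is determined by its restrictions to the $\Sigma^{\mathcal{M}}_{\bar a}$, and invariance can be tested level by level; also one may drop repetitions in $\bar a$ since a tuple with repetitions gives the same information as its underlying repetition-free subtuple.

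The third step, giving (ii) $\Leftrightarrow$ (iii), i.e. that one can take $\epsilon=0$, is where I expect the main (though still routine) obstacle, and it is exactly the point the authors flag as a mild improvement over \cite{Mo}. Here one uses $|M|^+$-saturation (equivalently, compactness) of $\mathcal{M}^*$ together with a compactness argument on the space of finitely-supported rational convex combinations: the set of ``$\epsilon$-invariant finitely supported measures on the orbit'' for fixed data is, after clearing denominators, cut out by finitely many linear inequalities over $\mathbb{Q}$, so the intersection over all $\epsilon>0$ is non-empty iff each finite intersection is — but more cleanly, the set of probability vectors $(\lambda_1,\dots,\lambda_k)$ (for a fixed finite support) satisfying the exact-invariance equations is a compact (closed, bounded) subset of a finite-dimensional simplex defined by a \emph{closed} condition, and it is non-empty because it is the limit of the non-empty compact sets of $\epsilon$-solutions as $\epsilon\to 0^+$ (a nested family of non-empty compact sets has non-empty intersection); the only subtlety is that the support $\{h_1,\dots,h_k\}$ may need to grow as $\epsilon$ shrinks, which is handled by instead taking a weak-$*$ cluster point of the $\epsilon$-invariant measures in the full compact space of measures on $\Sigma^{\mathcal{M}}_{\bar a}$ and then observing, via saturation, that any such cluster point supported on (the closure of) the orbit is itself realized as a finitely-supported measure on the orbit after passing to $\mathcal{M}^*$ — since being a convex combination of types of the form $\tp^{\textrm{full}}(h(\bar m))$ with exact invariance is a type-definable (in fact, for each finite sub-configuration, definable) condition that is finitely satisfiable, hence satisfiable. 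The implications (iii) $\Rightarrow$ (ii) $\Rightarrow$ (i) are of course immediate, so only (i) $\Rightarrow$ (iii) carries content, and it follows by combining the weak-$*$ compactness argument of step two with the saturation argument just described.
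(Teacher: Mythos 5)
Your outline of the easy directions is fine: (iii)$\rightarrow$(ii) is trivial, and your (ii)$\rightarrow$(i) argument (compactness of the space of finitely additive probability measures on the clopen algebra of the zero-dimensional space $\Sigma^{\mathcal{M}}$, plus the finite intersection property) is exactly the paper's. The genuine gap is in your step 3, the passage to $\epsilon=0$. Your proposed mechanism — take a weak-$*$ cluster point of the $\epsilon$-invariant finitely supported measures and then argue "via saturation" that it is realized as a finitely supported measure on the orbit "after passing to $\mathcal{M}^*$" — does not work. First, a cluster point of finitely supported measures need not be finitely supported, and "supported on the closure of the orbit" carries no information, since the orbit $\{\tp^{\textrm{full}}(h(\bar m)) : h \in G\}$ is dense in $\Sigma^{\mathcal{M}}$. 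Second, and more importantly, condition (iii) demands $h_1,\dots,h_k \in G=\aut(M)$; saturation of $\mathcal{M}^*$ can only produce elements $\sigma \in G^*$, whose types $\tp^{\textrm{full}}(\sigma(\bar m))$ range over all of $\Sigma^{\mathcal{M}}$ rather than over the orbit, so realizing a finitely satisfiable type in the monster does not yield witnesses for (iii). Third, "being a convex combination of types $\tp^{\textrm{full}}(h(\bar m))$ with $h\in G$ satisfying exact equalities with real weights $\lambda_s$" is not a type-definable condition on tuples from $\mathcal{M}^*$ in any sense that survives this realization step, so the "finitely satisfiable, hence satisfiable" move has nothing to apply to.

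The missing idea is much more elementary and is what the paper does for (i)$\rightarrow$(iii): fix the finitely many relevant clopen sets $F_0,\dots,F_{nr-1}$, namely the $[\varphi_i(\bar x)]\cdot g_j^{-1}$, and pass to the atoms $B_1,\dots,B_k$ of the finite Boolean algebra they generate. Each atom is a nonempty clopen set, so by density of the orbit it contains some $\tp^{\textrm{full}}(h_s(\bar m))$ with $h_s \in G$; setting $\lambda_s := \nu(B_s)$ for a $G$-invariant Borel probability measure $\nu$ gives a finitely supported $\mu$ with $\mu(F_j)=\nu(F_j)$ for every $j$, and invariance of $\nu$ then yields the exact equalities — no limiting argument and no saturation are needed. (Your fixed-support compactness remark could also be repaired along these lines: only the vector of values on the atoms matters, every such vector supported on the atoms is realizable with at most $k$ orbit points, so the support never needs to grow — but once you see the atom reduction, the direct construction above is shorter.) As written, however, your proof of (i)$\rightarrow$(iii) (equivalently (ii)$\rightarrow$(iii)) is not correct.
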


\begin{proof}
(iii) $\rightarrow$ (ii) is trivial. To see (ii) $\rightarrow$ (i), it is enough to recall that regular Borel probability measures on a zero-dimensional compact space (such as $\Sigma^{\mathcal{M}}$) are the same thing as finitely additive, probability measures on the Boolean algebra of clopen subsets (e.g. see \cite[416Q(a)]{Fr}),
and the set of all such measures with the topology inherited from the product $[0,1]^{\textrm{clopens}}$ is a compact space. Then the intersection of the closed sets of measures satisfying the inequalities in (ii) will be non-empty, and any measure in the intersection of these sets will be invariant.

(i) $\rightarrow$ (iii). Take any invariant, Borel probability measure $\nu$ on $\Sigma^{\mathcal{M}}$, and consider any data as in the assumptions of (iii). In particular, we have formulas $\varphi_0(\bar x), \dots, \varphi_{r-1}(\bar x)$ and elements $g_0,\dots,g_{n-1} \in G$. Let $F_0,\dots, F_{nr-1}$ be all the clopens $[\varphi_i(\bar x)] \cdot g_j^{-1}$ for $i < r$ and $j<n$.  We claim that we can find $\lambda_1,\dots,\lambda_k \in [0,1]$ with $\lambda_1 + \dots + \lambda_k =1$ and $h_1,\dots, h_k \in G$ such that for $\mu:= \lambda_1 \tp^{\textrm{full}}(h_1(\bar m)) + \dots + \lambda_k \tp^{\textrm{full}}(h_k(\bar m))$ for all $j < nr$ we have $\mu(F_j) = \nu(F_j)$. This will clearly imply (iii), as $\nu$ is $G$-invariant. In order to show the existence of $\lambda_s$ and $h_s$, consider the atoms $B_1,\dots,B_{k}$ of the Boolean algebra generated by $F_0,\dots,F_{nr-1}$. By the density in $\Sigma^{\mathcal{M}}$ of the types realized in $\mathcal{M}$, we can find $h_s \in G$ for $s\leq k$ such that $\tp^{\textrm{full}}(h_s(\bar m)) \in B_s$. Then put $\lambda_s:= \nu(B_s)$.
\end{proof}

\begin{clm}
Consider any finite tuple $\bar a$ from $M$ without repetitions, natural numbers $n,r,k$, elements $g_0,\dots,g_{n-1}\in G$, formulas $\varphi_0(\bar x), \dots, \varphi_{r-1}(\bar x)$ of the full language (with $\bar x$ corresponding to $\bar a$), $\lambda_1,\dots,\lambda_k \in [0,1]$, elements $h_1,\dots, h_k \in G$, and 
$$\mu := \lambda_1 \tp^{\textrm{full}}(h_1(\bar m)) + \dots + \lambda_k \tp^{\textrm{full}}(h_k(\bar m)).$$
Let $c \colon {M \choose \bar a} \to 2^r$ be given by $c(\bar a')(i)  := \left\{
\begin{array}{ll}
1 & \textrm{if} \models \varphi_i(\bar a')\\
0 & \textrm{if} \models \neg\varphi_i(\bar a')
\end{array}.
\right.$
Let $v := \lambda_1 h_1(\bar b)+ \dots +\lambda_k h_k(\bar b)$ for some $\bar b$ enumerating a finite $B \subseteq M$ containing $\bar a, g_0\bar a,\dots, g_{n-1}\bar a$.
Then for any $j_1,j_2 <n$: 
$$|c(v \circ (g_{j_1}(\bar a)))- c(v \circ (g_{j_2}(\bar a)))|_{\textrm{sup}}= \sup_i |\mu([\varphi_i(\bar x)] \cdot g_{j_1}^{-1}) - \mu([\varphi_i(\bar x)] \cdot g_{j_2}^{-1})|.$$
\end{clm}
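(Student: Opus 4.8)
The plan is to unwind both sides of the claimed identity into explicit finite sums and match them term by term. First I would expand the left-hand side directly from the definitions in Subsection \ref{section: structure Ramsey theory for arbitrary structures}. Recall that $v = \lambda_1 h_1(\bar b) + \dots + \lambda_k h_k(\bar b)$ is an element of $\left\langle \begin{smallmatrix} M \\ \bar b \end{smallmatrix} \right\rangle$ with $h_s(\bar b) \in {M \choose \bar b}$, so for the tuple $g_j(\bar a) \in {B \choose \bar a}$ (here using $\bar a \in B$ and that $g_j(\bar a)$ lies in $B$ — this is where we need $B$ to contain $g_0\bar a, \dots, g_{n-1}\bar a$) the composite $v \circ (g_j(\bar a))$ equals $\lambda_1 h_1(g_j(\bar a)) + \dots + \lambda_k h_k(g_j(\bar a))$, since $\sigma_s = h_s$ is precisely the automorphism with $\sigma_s(\bar b) = h_s(\bar b)$. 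Then $c(v \circ (g_j(\bar a)))$ is the vector in $[0,1]^r$ whose $i$-th coordinate is $\sum_{s=1}^k \lambda_s\, c(h_s(g_j(\bar a)))(i) = \sum_{s \colon\, \models \varphi_i(h_s(g_j(\bar a)))} \lambda_s$.

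Next I would expand the right-hand side. By definition $\mu = \sum_{s=1}^k \lambda_s \tp^{\textrm{full}}(h_s(\bar m))$ as a measure on $\Sigma^{\mathcal{M}}$, so $\mu([\varphi_i(\bar x)] \cdot g_j^{-1}) = \sum_{s=1}^k \lambda_s \cdot \mathbf{1}[\tp^{\textrm{full}}(h_s(\bar m)) \in [\varphi_i(\bar x)] \cdot g_j^{-1}]$, where each summand uses that $\tp^{\textrm{full}}(h_s(\bar m))$ is a Dirac-type point mass. The key computation is to identify when $\tp^{\textrm{full}}(h_s(\bar m)) \in [\varphi_i(\bar x)] \cdot g_j^{-1}$. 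By the definition of the right action, $[\varphi_i(\bar x)] \cdot g_j^{-1} = \{ p \cdot g_j^{-1} : p \in [\varphi_i(\bar x)]\}$, equivalently $q \in [\varphi_i(\bar x)] \cdot g_j^{-1}$ iff $q \cdot g_j \in [\varphi_i(\bar x)]$. Applying this with $q = \tp^{\textrm{full}}(h_s(\bar m))$ and using $\tp^{\textrm{full}}(h_s(\bar m)) \cdot g_j = \tp^{\textrm{full}}(h_s(g_j(\bar m)))$, the condition becomes $\varphi_i(\bar x) \in \tp^{\textrm{full}}(h_s(g_j(\bar m)))$, i.e. $\models \varphi_i(h_s(g_j(\bar a)))$ (the formula only involves the subtuple $\bar x$ corresponding to $\bar a$). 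Hence $\mu([\varphi_i(\bar x)] \cdot g_j^{-1}) = \sum_{s \colon\, \models \varphi_i(h_s(g_j(\bar a)))} \lambda_s$, which is exactly the $i$-th coordinate of $c(v \circ (g_j(\bar a)))$ computed above.

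Putting these together, for each coordinate $i$ we get $c(v \circ (g_{j_1}(\bar a)))(i) = \mu([\varphi_i(\bar x)] \cdot g_{j_1}^{-1})$ and likewise for $j_2$, so $|c(v \circ (g_{j_1}(\bar a))) - c(v \circ (g_{j_2}(\bar a)))|_{\textrm{sup}} = \sup_i |\mu([\varphi_i(\bar x)] \cdot g_{j_1}^{-1}) - \mu([\varphi_i(\bar x)] \cdot g_{j_2}^{-1})|$, as desired. I do not expect a serious obstacle here; the only point requiring care is bookkeeping about which action ($q \mapsto q \cdot g$ versus translating the clopen set) is being used and the fact that $[\varphi_i(\bar x)]$, being clopen in the zero-dimensional space $\Sigma^{\mathcal{M}}$, has a well-defined $\mu$-measure that is simply the sum of the $\lambda_s$ over the point masses landing inside it. One should also note at the outset that since $\bar a$ has no repetitions and all relevant tuples lie in $B$, the composite operation $v \circ (\cdot)$ is well-defined on ${B \choose \bar a}$ and agrees with the literal affine combination of the $h_s$-images.
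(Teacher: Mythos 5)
Your proposal is correct and follows essentially the same route as the paper: both expand $v \circ (g_j(\bar a)) = \sum_s \lambda_s h_s(g_j(\bar a))$, compute the $i$-th coordinate of $c$ as the sum of the $\lambda_s$ with $\models \varphi_i(h_s(g_j(\bar a)))$, and identify this with $\mu([\varphi_i(\bar x)]\cdot g_j^{-1})$ via the point-mass interpretation of $\mu$ and the right action on types. The paper just writes this as one compressed chain of equalities, while you spell out the bookkeeping (well-definedness of $v\circ(\cdot)$ and the translation between $q \in [\varphi_i]\cdot g_j^{-1}$ and $q\cdot g_j \in [\varphi_i]$) explicitly.
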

\begin{proof}
$
|c(v \circ (g_{j_1}(\bar a)))- c(v \circ (g_{j_2}(\bar a)))|_{\textrm{sup}} = |c(\sum \lambda_s h_s(g_{j_1}(\bar a))) - c(\sum \lambda_s h_s(g_{j_2}(\bar a)))|_{\textrm{sup}}= |\sum \lambda_s c(h_s(g_{j_1}(\bar a))) - \sum \lambda_s c(h_s(g_{j_2}(\bar a)))|_{\textrm{sup}} =\sup_i |\sum\{ \lambda_s: \varphi_i(\bar x) \in \tp^{\textrm{full}}(h_s(g_{j_1}(\bar a)))\} - \sum\{ \lambda_s: \varphi_i(\bar x) \in \tp^{\textrm{full}}(h_s(g_{j_2}(\bar a)))\} |=  \sup_i |\mu([\varphi_i(\bar x)] \cdot g_{j_1}^{-1}) - \mu([\varphi_i(\bar x)] \cdot g_{j_2}^{-1})|.$
\end{proof}

Now, we turn to the implications between (1)-(3).
The implication (3) $\rightarrow$ (2) is trivial. 

(2) $\rightarrow$ (1). By Claim 1, it is enough to show that (ii) holds. But this follows from Claim 2 and the ECRP of $M$.

(1) $\rightarrow$ (3). Consider any finite $\bar a$ from $M$ without repetitions, and a finite $B \subseteq M$ (enumerated as $\bar b$) containing $\bar a$. Let $c \colon {M \choose \bar a} \to 2^r$ be a coloring. Then there are formulas (in fact predicates) $\varphi_0(\bar x), \dots, \varphi_{r-1}(\bar x)$ of the full language such that $c(\bar a')(i)  := \left\{
\begin{array}{ll}
1 & \textrm{if} \models \varphi_i(\bar a')\\
0 & \textrm{if} \models \neg\varphi_i(\bar a')
\end{array}.
\right.$
Let $g_0(\bar a),\dots,g_{n-1}(\bar a)$ be all elements of ${B \choose \bar a}$ (where the $g_i$'s are from $G$). Now, take $\lambda_1,\dots,\lambda_k \in [0,1]$ and $h_1,\dots, h_k \in G$ provided by Claim 1(iii). Then, by Claim 2,  $v := \lambda_1 h_1(\bar b)+ \dots +\lambda_k h_k(\bar b)$ witnesses the strong ECRP (by Remark \ref{remark: M in place of C for amenability}).
\end{proof}

\section{Universal minimal flow}\label{section: universal minimal flow}

In this section, we reprove Fact \ref{fact: universal minimal flow} in the more general setting. Throughout, $L_0 \subseteq L$ are two first order languages. Whenever $M$ is an $L$-structure, its reduct to $L_0$ will be denoted by $M_0$. Then clearly $\aut(M) \leq \aut(M_0)$.

\begin{dfn}
Let $M$ be a structure in $L$, and let $\mathcal{A}$ be a cofinal family of finite subsets of $M$. We say that $\mathcal{A}$ has:
\begin{enumerate}
\item the {\em right expansion property for $(L_0,L)$} if for every $A \in \mathcal{A}$ there is $B \in \mathcal{A}$ such that for every $\sigma \in \aut(M_0)$ there is $\tau \in \aut(M)$ with $\tau[A] \subseteq \sigma[B]$,
\item the {\em left expansion property for $(L_0,L)$} if for every $A \in \mathcal{A}$ there is $B \in \mathcal{A}$ such that for every $\sigma \in \aut(M_0)$ there is $\tau \in \aut(M)$ with $\tau[\sigma[A]] \subseteq B$,
\item the {\em expansion property for $(L_0,L)$} if for every $A \in \mathcal{A}$ there is $B \in \mathcal{A}$ such that for every $\sigma_1, \sigma_2 \in \aut(M_0)$ there is $\tau \in \aut(M)$ with $\tau[\sigma_1[A]] \subseteq \sigma_2[B]$.
\end{enumerate}
\end{dfn}


\begin{dfn}
We will say that a structure $M$ in $L$ has the {\em [right or left] expansion property for $L_0$} if the family of all finite subsets of $M$ has the [resp. right or left] expansion property for $(L_0,L)$.
\end{dfn}

It is clear that $M$ has the [right or left] expansion property for $L_0$ if and only if some (equivalently every) cofinal family of finite subsets of $M$ has the [resp. right or left] expansion property for $(L_0,L)$.

The next remark is left as an exercise. (Recall from Subsection \ref{subsection: Ramsey theory} that for finite structures $A$ and $B$ in the same language, $A \leq B$ means that $\mbox{Emb}(A,B) \ne \emptyset$.)

\begin{rem} Assume $M$ and $M_0:=M \! \upharpoonright \! L_0$ are locally finite Fra\"{i}ss\'e structures in languages $L \supseteq L_0$, respectively, where $L \setminus L_0$ consists of relation symbols. Let $\mathcal{K} := \age(M)$ and $\mathcal{K}_0 := \age(M_0)$. Then:
\begin{enumerate}
\item $M$ has the expansion property for $L_0$ if and only if $\mathcal{K}$ has the expansion property relative to $\mathcal{K}_0$ in the sense of \cite{The} (see Subsection \ref{subsection: Ramsey theory});
\item  $M$ has the right expansion property for $L_0$ if and only if for every $A \in \mathcal{K}$ there exists $B_0 \in \mathcal{K}_0$ such that for every $B \in \mathcal{K}$ with $B\! \upharpoonright \! L_0 = B_0$ one has $A \leq B$;
\item   $M$ has the left expansion property for $L_0$ if and only if for every $A_0 \in \mathcal{K}_0$ there exists $B \in \mathcal{K}$ such that for every $A\in \mathcal{K}$ with $A \! \upharpoonright \! L_0=A_0$ one has $A \leq B$.
\end{enumerate}
\end{rem}

\begin{dfn}
We will say that a structure $M$ in $L$ is {\em precompact} for $L_0$ if each $\aut(M_0)$-orbit on a finite Cartesian power of $M$ is a union of finitely many $\aut(M)$-orbits.
\end{dfn}

Clearly, if $M$ and $M_0$ are ultrahomogeneous, then the above orbits are the same thing as tuples with the same quantifier-free types.

Again, the following easy remark is left as an exercise.

\begin{rem}
 Assume $M$ and $M_0:=M \! \upharpoonright \! L_0$ are locally finite Fra\"{i}ss\'e structures in languages $L \supseteq L_0$, respectively,  where $L \setminus L_0$ consists of relation symbols. Then $M$ is precompact for $L_0$ if and only if $M$ is a precompact expansion of $M_0$ in the sense of \cite{The} (see Subsection \ref{subsection: Ramsey theory}).
\end{rem}

\begin{rem}
Let $M$ be an $L$-structure which is precompact for $L_0$. Then $M$ has the right expansion property for $L_0$ if and only if it has the expansion property for $L_0$.
\end{rem}

\begin{proof} 
$(\leftarrow)$ is trivial. To show $(\rightarrow)$, consider any finite $A \subseteq M$.
By precompactness, there are $\sigma_1,\dots,\sigma_n \in \aut(M_0)$ such that for every $\sigma \in \aut(M_0)$ there is $\tau \in \aut(M)$ with $\tau[\sigma[A]] = \sigma_i[A]$ for some $i \in \{1,\dots,n\}$. By the right expansion property, we can find finite $B_1,\dots,B_n \subseteq M$ such that for every $i \in \{1,\dots,n\}$,  for every $\sigma \in \aut(M_0)$, there is $\tau \in \aut(M)$ with $\tau[\sigma_i[A]] \subseteq \sigma[B_i]$. Put $B:=B_1 \cup \dots \cup B_n$. We will show that it witnesses the expansion property for $A$. For this, consider any $g_1, g_2 \in \aut(M_0)$. By the choice of the $\sigma_i$'s, there exists $\tau_1 \in \aut(M)$ such that $\tau_1[g_1[A]] = \sigma_i[A]$ for some $i \in \{1,\dots,n\}$. Next, by the choice of $B_i$, there exists $\tau_2 \in \aut(M)$ such that $\tau_2[\sigma_i[A]] \subseteq g_2[B_i] \subseteq g_2[B]$. Hence, $\tau_2 \circ \tau_1 \in \aut(M)$ and $(\tau_2 \circ \tau_1)[g_1[A]] \subseteq g_2[B]$.
\end{proof}

When $M$ is an $L$-structure and $M_0:= M \! \upharpoonright \! L_0$, we have structures $\mathcal{M}$ and $\mathcal{M}_0$ defined as at the beginning of Section \ref{section: universal ambit}. And $\Sigma^{\mathcal{M}}$  is the universal right $\aut(M)$-ambit, while $\Sigma^{\mathcal{M}_0}$ is the universal right $\aut(M_0)$-ambit. Note that $\Sigma^{\mathcal{M}}$ can and will be naturally treated as an $\aut(M)$-subflow of $\Sigma^{\mathcal{M}_0}$.

Now, we turn to the main results.

\begin{thm}\label{theorem: universal minimal flow general result}
Let $M$ be a structure in $L$ with the right expansion property for $L_0$. Assume $\aut(M)$ fixes a point $p = \tp^{\textrm{full}}(\sigma(\bar m))\in \Sigma^{\mathcal{M}}$ (where $\sigma \in \aut(M)^*$), i.e. $p \cdot \aut(M) = \{ p\}$ (by Theorem \ref{theorem: extreme amenability}, this is equivalent to saying that $M$ has the ERP). Then $\cl(p \cdot \aut(M_0))$ is the universal minimal right $\aut(M_0)$-flow.
\end{thm}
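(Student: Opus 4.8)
The strategy is to verify that $Y:=\cl(p\cdot\aut(M_0))$ is a \emph{minimal} $\aut(M_0)$-flow. It is at least a subflow: since $p\in\Sigma^{\mathcal{M}}\subseteq\Sigma^{\mathcal{M}_0}$, the set $Y$ is the closure of the $\aut(M_0)$-orbit of $p$ inside the universal right $\aut(M_0)$-ambit $\Sigma^{\mathcal{M}_0}$, hence closed and $\aut(M_0)$-invariant; moreover $p$ is fixed by $\aut(M)\leq\aut(M_0)$ for the $\aut(M_0)$-action too, because $\Sigma^{\mathcal{M}}$ is an $\aut(M)$-subflow of $\Sigma^{\mathcal{M}_0}$. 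Granting minimality, $Y$ is a minimal subflow of the universal right $\aut(M_0)$-ambit, hence it is the universal minimal right $\aut(M_0)$-flow by the general principle recalled in the Preliminaries. To prove minimality I will show that $p$ is an \emph{almost periodic} point of $(\aut(M_0),\Sigma^{\mathcal{M}_0})$: for every basic clopen $U$ containing $p$, the return set $N(p,U):=\{g\in\aut(M_0):p\cdot g\in U\}$ is right syndetic, i.e.\ $N(p,U)\cdot F=\aut(M_0)$ for some finite $F\subseteq\aut(M_0)$. This suffices: $\bigcup_{f\in F}U\cdot f$ is clopen, so $p\cdot\aut(M_0)\subseteq\bigcup_{f\in F}U\cdot f$ gives $Y\subseteq\bigcup_{f\in F}U\cdot f$, whence every $y\in Y$ has an $\aut(M_0)$-translate lying in $U$; as $U$ runs over a neighbourhood basis of $p$, this means $p\in\cl(y\cdot\aut(M_0))$ for every $y\in Y$, i.e.\ $Y$ is minimal.

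The only real input is the following reformulation of the right expansion property. Fix a finite tuple $\bar a$ in $M$, with underlying finite set $A$. By the right expansion property choose a finite $B\subseteq M$ with $A\subseteq B$ such that for every $\sigma\in\aut(M_0)$ there is $\tau\in\aut(M)$ with $\tau[A]\subseteq\sigma[B]$, and fix an enumeration $\bar b$ of $B$. Among the finitely many length-$|\bar a|$ tuples of elements of $B$, let $\bar a_1,\dots,\bar a_m$ be those lying in the $\aut(M_0)$-orbit of $\bar a$ (the list is nonempty, since it contains $\bar a$ itself). We claim that for every $g\in\aut(M_0)$ there is $i\leq m$ with $g(\bar a_i)\in\binom{M}{\bar a}$, the $\aut(M)$-orbit of $\bar a$. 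Indeed, applying the right expansion property with $\sigma:=g$ yields $\tau\in\aut(M)$ with $\tau[A]\subseteq g[B]$; the tuple $g^{-1}(\tau(\bar a))$ then has all its entries in $B$ and, since $g^{-1}\tau\in\aut(M_0)$, lies in the $\aut(M_0)$-orbit of $\bar a$, so it equals some $\bar a_i$, whence $g(\bar a_i)=\tau(\bar a)\in\binom{M}{\bar a}$.

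To finish, fix a basic clopen $U=[\varphi]$ with $p\in U$, where $\varphi$ is a full-language formula on the subtuple $\bar x'$ of $\bar x$ corresponding to a finite tuple $\bar a$ in $M$; thus $\varphi\in p$, and hence, by $\aut(M)$-invariance of $p$, the formula $\varphi$ read on the variables corresponding to any tuple in $\binom{M}{\bar a}$ also lies in $p$. Take $B$, $\bar b$ and $\bar a_1,\dots,\bar a_m$ as above, write $\bar a_i=h_i(\bar a)$ with $h_i\in\aut(M_0)$, and put $F:=\{h_i^{-1}:i\leq m\}$. Given $g\in\aut(M_0)$, the claim provides $i$ with $(gh_i)(\bar a)=g(\bar a_i)\in\binom{M}{\bar a}$; therefore $\varphi$ read on the variables for $(gh_i)(\bar a)$ lies in $p$, which is exactly the statement $p\cdot(gh_i)\in U$, i.e.\ $(p\cdot g)\cdot h_i\in U$, i.e.\ $p\cdot g\in U\cdot h_i^{-1}$. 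So $p\cdot\aut(M_0)\subseteq\bigcup_{f\in F}U\cdot f$, i.e.\ $N(p,U)\cdot F=\aut(M_0)$. Since such $U$ form a neighbourhood basis of $p$, this shows $p$ is almost periodic, so $Y$ is minimal, and the theorem follows.

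The obstacle here is conceptual rather than computational: one has to see that minimality of $Y$ is best obtained through almost periodicity of $p$, and that $N(p,U)$ contains $\{g\in\aut(M_0):g(\bar a)\in\binom{M}{\bar a}\}$ --- this is precisely where the hypothesis $p\cdot\aut(M)=\{p\}$ (equivalently the ERP) is used, and one should keep in mind that, as $p$ need not be realised in $M$, the assertion ``$\varphi$ on the variables for $\bar a'$ lies in $p$'' for $\bar a'\in\binom{M}{\bar a}$ is a fact about the type $p$, not about membership of a tuple in a subset of $M^{|\bar a|}$. The syndeticity of that set is then exactly what the \emph{right} (rather than left) expansion property delivers, once one tracks length-$|\bar a|$ tuples of elements of $B$ through all the ways they can occur inside $\bar b$; matching the ``right'' side of the expansion property to the right action on $\Sigma^{\mathcal{M}_0}$ is the only place where orientation must be watched.
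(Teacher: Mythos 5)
Your proposal is correct and is essentially the paper's argument: the core step --- using the right-expansion witness $B$ to produce, for every $g \in \aut(M_0)$, some $h_j$ from the fixed finite list of $\aut(M_0)$-conjugates of $\bar a$ inside $B$ such that $p \cdot g h_j \in [\varphi]$, which is exactly where $p \cdot \aut(M) = \{p\}$ enters --- is the same computation as in the paper's proof. The only difference is packaging: you phrase it positively (the return set is syndetic, so finitely many translates of $[\varphi]$ cover $\cl(p \cdot \aut(M_0))$, giving $p \in \cl(q \cdot \aut(M_0))$ for every $q$ in the closure and hence minimality directly), whereas the paper runs the identical computation as a contradiction against a net $p \cdot g_i \to q$ with $p \notin \cl(q \cdot \aut(M_0))$.
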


\begin{proof}

By universality of the right $\aut(M_0)$-ambit $\Sigma^{\mathcal{M}_0}$, it is enough to show that the subflow $\cl(p \cdot \aut(M_0))$ is minimal.
Suppose for a contradiction that there is $q \in \cl(p \cdot \aut(M_0))$ such that $p \notin \cl(q \cdot \aut(M_0))$. Then we can find $\varphi(\bar x) \in p$ such that

\begin{equation}\label{3}
q = \lim_i p \cdot g_i = \lim_i \tp^{\textrm{full}}(\sigma(g_i(\bar m))) \textrm{ for some net } (g_i)_i \textrm{ in } \aut(M_0),
\end{equation}
and
\begin{equation}\label{4}
\varphi(\bar x) \notin q \cdot h \textrm{ for all } h \in \aut(M_0).
\end{equation}

The formula $\varphi(\bar x)$ uses only a finite subtuple $\bar x'$ of $\bar x$ corresponding to a finite subtuple $\bar a$ of $\bar m$. Let $A$ be the set of all coordinates of $\bar a$. Take a finite $B\subseteq M$ witnessing the right expansion property for $A$. Choose $h_1,\dots,h_n \in \aut(M_0)$ so that $h_1(\bar a), \dots, h_n(\bar a)$ are all the $\aut(M_0)$ -conjugates of $\bar a$ contained in $B$.

By (\ref{3}), (\ref{4}), and the continuity of the action of $\aut(M_0)$ on $\Sigma^{\mathcal{M}_0}$, there is $i$ for which

\begin{equation}\label{5}
\varphi(\bar x) \notin (p \cdot g_i)\cdot h_j \textrm{ for all } j \in \{ 1,\dots,n\}.
\end{equation}

By the choice of $B$, there exists $\tau \in \aut(M)$ such that $\tau[A] \subseteq g_i[B]$.  Then $\tau(\bar a) = g_i(\bar a')$ for some tuple $\bar a' \subseteq B$, and so $g_i^{-1}\tau(\bar a) = \bar a'$. By the choice of $h_1,\dots,h_n$, since $g_i^{-1}\tau \in \aut(M_0)$, we get that $\bar a' = h_j(\bar a)$ for some $j \in \{ 1,\dots,n\}$. Thus, $\tau(\bar a) = g_ih_j(\bar a)$.
 
On the other hand, since $p \cdot \aut(M) = \{ p\}$, we have $p= p \cdot \tau = \tp^{\textrm{full}}(\sigma(\tau(\bar m)))$, so $\models \varphi(\sigma(\tau(\bar m)))$, so $\models \varphi(\sigma(\tau(\bar a)))$. 

Therefore, $\models \varphi(\sigma g_ih_j(\bar a))$ which means that $\varphi(\bar x) \in (p \cdot g_i) \cdot h_j$, a contradiction with (\ref{5}).
\end{proof}

Now, we reprove Fact \ref{fact: universal minimal flow}, extending the context to uncountable structures.

\begin{thm}
Let $M$ be an ultrahomogeneous $L$-structure, and let $L \setminus L_0 =\{ R_i: i \in I\}$ consist of relation symbols. Assume $M$ is precompact for $L_0$, has the ERP and the [right] expansion property for $L_0$. Let $\vec{R}:= M \! \upharpoonright \! (L \setminus L_0)$ be an element of the right $\aut(M_0)$-flow $X$ of all  $L \setminus L_0$-structures on the universe of $M$. Then the $\aut(M_0)$-subflow $\cl(\vec{R} \cdot \aut(M_0))$ is the universal minimal right $\aut(M_0)$-flow.
\end{thm}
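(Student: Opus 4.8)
The last theorem is essentially Theorem~\ref{theorem: universal minimal flow general result} dressed up in the concrete language of relational expansions, so the strategy is to reduce it to that theorem. The bridge is to identify the flow $X$ (together with the distinguished point $\vec R$) with a sub-ambit of $\Sigma^{\mathcal M_0}$, and to identify $\Sigma^{\mathcal M}$ (the universal right $\aut(M)$-ambit, sitting inside $\Sigma^{\mathcal M_0}$) with $\cl(\vec R \cdot \aut(M_0))$ in $X$. Once these identifications are in place, the hypotheses ``$M$ has the ERP'' (equivalently, by Theorem~\ref{theorem: extreme amenability}, $\aut(M)$ fixes the point $\tp^{\textrm{full}}(\bar m) \in \Sigma^{\mathcal M}$) and ``$M$ has the right expansion property for $L_0$'' are exactly the hypotheses of Theorem~\ref{theorem: universal minimal flow general result}, applied with $p = \tp^{\textrm{full}}(\bar m)$, and we conclude that $\cl(p \cdot \aut(M_0))$ is the universal minimal right $\aut(M_0)$-flow; transporting back along the identification gives the statement.

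\textbf{Key steps, in order.} First I would make precise the map $\Sigma^{\mathcal M_0} \to X$: given $q = \tp^{\textrm{full}}(\sigma(\bar m)) \in \Sigma^{\mathcal M_0}$ with $\sigma \in \aut(M_0)^*$, the tuple $\sigma(\bar m)$ in $M_0^*$ carries, for each $i \in I$, an interpretation of $R_i$ pulled back to the universe of $M$ via the enumeration $\bar m$ --- concretely, for a finite subtuple $\bar a$ of $\bar m$ and the corresponding finite subtuple $\bar x$ of $\bar x$, whether $R_i(\sigma(\bar a))$ holds is determined by $q$ since ``$R_i(\bar x)$'' is (equivalent to) a formula of the full language of $\mathcal M_0$ once we observe that $L \setminus L_0$-information about $\bar m$ is $\aut(M_0)$-finite-orbit data, hence coded by predicates of $\mathcal M_0$ --- wait, this needs care, since $R_i$ is not in $L_0$. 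The correct point is: $\mathcal M$ refines $\mathcal M_0$ only by adding the sort structure coming from $\aut(M)$ rather than $\aut(M_0)$; but the underlying universe and the full language already let us speak about arbitrary subsets of $M^{n_i}$, so $R_i^M$ is a $\emptyset$-definable predicate in $\mathcal M_0$. Thus the map $q \mapsto \langle R_i(\sigma(\bar x))_{i} \rangle$ (reading off, coordinatewise along $\bar m$, which instances hold) is well-defined, continuous, $\aut(M_0)$-equivariant, and sends $\tp^{\textrm{full}}(\bar m)$ to $\vec R$. Second, I would check that its image is exactly $\cl(\vec R \cdot \aut(M_0))$ (density of $\aut(M_0)$-translates of $\tp^{\textrm{full}}(\bar m)$ in $\Sigma^{\mathcal M_0}$ plus continuity and compactness), and that it restricts to a homeomorphism between $\Sigma^{\mathcal M}$ and $\cl(\vec R \cdot \aut(M_0))$ --- injectivity on $\Sigma^{\mathcal M}$ uses ultrahomogeneity of $M$: two elements $\tp^{\textrm{full}}(\sigma(\bar m)), \tp^{\textrm{full}}(\tau(\bar m))$ of $\Sigma^{\mathcal M}$ agree iff $\sigma(\bar m) \equiv^{\mathcal M} \tau(\bar m)$ iff (by ultrahomogeneity, so that $L$-quantifier-free type determines $\aut(M)$-orbit, and the full language of $\mathcal M$ adds nothing on $M$ beyond $\aut(M)$-orbit data) they have the same $L \setminus L_0$-expansion reading, i.e. the same image in $X$. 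Third, cite Theorem~\ref{theorem: universal minimal flow general result}: $M$ has the ERP, hence $\aut(M)$ fixes $\tp^{\textrm{full}}(\bar m) \in \Sigma^{\mathcal M}$; $M$ has the right expansion property for $L_0$ (given, or obtained from the expansion property via the earlier remark since $M$ is precompact for $L_0$); so $\cl(\tp^{\textrm{full}}(\bar m) \cdot \aut(M_0))$ is the universal minimal right $\aut(M_0)$-flow. Fourth, transport this conclusion across the identification from steps one and two to get $\cl(\vec R \cdot \aut(M_0))$.

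\textbf{Main obstacle.} The genuinely delicate point is step two, specifically the claim that the reduction map is \emph{injective} on $\Sigma^{\mathcal M}$, i.e. that a type $\tp^{\textrm{full}}(\sigma(\bar m)) \in \Sigma^{\mathcal M}$ is recovered from its $L \setminus L_0$-reading alone. This is where ultrahomogeneity of $M$ (and precompactness for $L_0$, ensuring the reading is consistent with being an $\aut(M)$-orbit) is really used: one must argue that knowing the full $\mathcal M$-type of $\sigma(\bar m)$ is the same as knowing, for each finite $\bar a$, the quantifier-free $L$-type of $\sigma(\bar a)$, which in turn decomposes as the quantifier-free $L_0$-type (determined already inside $\Sigma^{\mathcal M_0}$) together with the $L \setminus L_0$-reading. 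I expect the cleanest way to handle this is to note that $\Sigma^{\mathcal M}$ as a subset of $\Sigma^{\mathcal M_0}$ consists exactly of those $q$ whose $L \setminus L_0$-reading is "internally consistent" in the sense of coming from an actual element of $X$ extending $M_0 = \sigma(\bar m)\restr L_0$ to a model of $\Th(\mathcal M)$, and that the reduction map on this set is a bijection onto its image by $\aleph_0$-saturation of $\mathcal M_0^*$ together with ultrahomogeneity; the equivariance and continuity are then routine, and the rest is bookkeeping already done in Theorem~\ref{theorem: universal minimal flow general result}.
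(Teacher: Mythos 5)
Your overall skeleton --- define a ``reading'' map from $\Sigma^{\mathcal{M}_0}$ to $X$, invoke Theorem \ref{theorem: universal minimal flow general result}, and transport the conclusion --- is indeed the paper's route, but the middle of your argument contains a genuine error which removes the actual content. First, extreme amenability (ERP) does \emph{not} give you that $\aut(M)$ fixes the distinguished point $\tp^{\textrm{full}}(\bar m)$: since every element of $M$ is named by a predicate of the full language, $\tp^{\textrm{full}}(g(\bar m)) = \tp^{\textrm{full}}(\bar m)$ forces $g(\bar m)=\bar m$, so that point is fixed only when $\aut(M)$ is trivial. What Theorem \ref{theorem: extreme amenability} gives is a fixed point $p = \tp^{\textrm{full}}(\sigma(\bar m))$ for some \emph{nonstandard} $\sigma \in \aut(M)^*$, and the minimal flow one must analyze is $\cl(p \cdot \aut(M_0))$, which is not contained in $\Sigma^{\mathcal{M}}$ (indeed $\Sigma^{\mathcal{M}}$ is not even $\aut(M_0)$-invariant). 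Second, your key identification fails outright: for every $\rho \in \aut(M)^*$ and every tuple $\bar a$ from $M$ one has $\models R_i(\rho(\bar a)) \leftrightarrow R_i(\bar a)$, by elementarity applied to the sentence saying that every $g \in \aut(M)$ preserves $R_i$. Hence the $L \setminus L_0$-reading of \emph{every} type in $\Sigma^{\mathcal{M}}$ is just $\vec{R}$; the reading map collapses $\Sigma^{\mathcal{M}}$ to a single point and is nowhere near a homeomorphism onto $\cl(\vec{R} \cdot \aut(M_0))$, so the ``main obstacle'' you flag (injectivity on $\Sigma^{\mathcal{M}}$) is not merely delicate, it is false, and it is also injectivity on the wrong set.

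The real work, which your plan omits, is to show that the map $\Phi$ (defined on $\Sigma^{\mathcal{M}_0}$ exactly as you describe) restricted to $\cl(p \cdot \aut(M_0))$ is injective and that $\vec{R}$ itself lies in its image (so that by minimality the image equals $\cl(\vec{R} \cdot \aut(M_0))$); injectivity cannot be waived, since a proper quotient of the universal minimal flow need not be universal. This is precisely where precompactness, the expansion property and ultrahomogeneity enter in the paper: one proves (i) if $\bar\beta \in \aut(M)\bar\alpha$ then $\tp^{\textrm{full}}(\sigma(\bar\alpha)) = \tp^{\textrm{full}}(\sigma(\bar\beta))$, from $p \cdot \aut(M) = \{p\}$; (ii) $\{\tp^{\textrm{qf}}_L(\sigma(\bar a')) : \bar a' \in \aut(M_0)\bar a\} = \{\tp^{\textrm{qf}}_L(\bar a') : \bar a' \in \aut(M_0)\bar a\}$, using precompactness and elementarity for one inclusion and the expansion property for the other; and (iii) for $\bar a', \bar a'' \in \aut(M_0)\bar a$, $\tp^{\textrm{qf}}_L(\sigma(\bar a')) = \tp^{\textrm{qf}}_L(\sigma(\bar a''))$ iff $\bar a'' \in \aut(M)\bar a'$, by a counting argument combining (i), (ii), ultrahomogeneity and precompactness. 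Claims (i)--(iii) yield injectivity of $\Phi$ on $\cl(p \cdot \aut(M_0))$, and (ii) produces elements $g_{\bar a} \in \aut(M_0)$ with $\Phi(p)\cdot g_{\bar a}$ agreeing with $\vec{R}$ on any prescribed finite set, whence $\vec{R} \in \Phi[\cl(p \cdot \aut(M_0))]$. Without some version of these steps your reduction to Theorem \ref{theorem: universal minimal flow general result} does not go through.
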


\begin{proof}
By Theorem \ref{theorem: extreme amenability}, there is $p = \tp^{\textrm{full}}(\sigma(\bar m))\in \Sigma^{\mathcal{M}}$ (where $\sigma \in \aut(M)^*$) with $p \cdot \aut(M) = \{ p\}$. 

Let $\Phi \colon \Sigma^{\mathcal{M}_0} \to X$ be given by declaring that $\Phi(\tp^{\textrm{full}}(\rho(\bar m)))$ (for $\rho \in \aut(M_0)^*$) is a structure $M^\rho$ with the same universe as $M$, where 
$$M^\rho \models R_i(\bar a) \iff M^* \models R_i(\rho(\bar a)).$$

It is clear that $\Phi$ is continuous and preserves the right $\aut(M_0)$-actions.
Therefore, by Theorem \ref{theorem: universal minimal flow general result}, it remains to show that $\Phi \! \upharpoonright \!  \cl(p \cdot \aut(M_0))$ is injective and with the image equal to $\cl(\vec{R} \cdot \aut(M_0))$.

\begin{clm}
\begin{enumerate}
\item For every tuple $\bar a$ in $M$, if $\bar a' \in \aut(M) \bar a$, then $\tp^{\textrm{full}}(\sigma(\bar a))= \tp^{\textrm{full}}(\sigma(\bar a'))$.
\item For every tuple $\bar a$ in $M$ (or even in $M^*$), $\tp^{\textrm{qf}}_L(\sigma (\bar a)) = \tp^{\textrm{qf}}_L(\bar a)$, where $\tp^{\textrm{qf}}_L(\bar \alpha)$ denotes the qf-type of $\bar \alpha$ in $L$.
\item For any finite tuples $\bar a, \bar a'$ in $M$, 
$\tp^{\textrm{qf}}_L(\sigma(\bar a)) = \tp^{\textrm{qf}}_L(\sigma(\bar a'))$ if and only if $\bar a' \in \aut(M) \bar a$.
\end{enumerate}
\end{clm}

\begin{proof}
(1) is clear from the choice of $p$.

(2) follows from the fact that $\sigma \in \aut(M)^*$, and so $\sigma$ acts on $M^*$ as an $L$-automorphism.

(3) follows from (2), finiteness of $\bar a$ and $\bar a'$, and ultrahomogeneity of $M$.
\end{proof}

Note that the assignment $\Phi(\tp^{\textrm{full}}(\rho(\bar m))) \mapsto  \tp^{\textrm{qf}}_L(\rho(\bar m))$ yields a homeomorphic identification of $\im(\Phi)$ with a closed subset of the space of the qf-types in $L$.

We will show now  that $\Phi \! \upharpoonright \!  \cl(p \cdot \aut(M_0))$ is injective; in other words, each type in $\cl( p \cdot \aut(M_0))$ is determined by the quantifier free $L$-type.
%
Take any $q,r \in \cl(p \cdot \aut(M_0))$ such that $\Phi(q)=\Phi(r)$. We have $q=\lim_i \tp^{\textrm{full}}(\sigma(g_i(\bar m)))$ and $r = \lim_j \tp^{\textrm{full}}(\sigma(h_j(\bar m)))$ for some nets $(g_i)_i$ and $(h_j)_j$ from $\aut(M_0)$. By continuity of $\Phi$ and the last paragraph, we get $\lim_i \tp^{\textrm{qf}}_L(\sigma(g_i(\bar m))) = \lim_j \tp^{\textrm{qf}}_L(\sigma(h_j(\bar m)))$.

Take any finite tuple $\bar a$ in $M$. By precompactness and Claim 1(2), there are only finitely many qf-types in $L$ of the elements of $\sigma\aut(M_0) \bar a$. So, by the equality of the above limits, we get that there are some $i_0$ and $j_0$ such that for all $i> i_0$ and $j > j_0$, we have
$\tp^{\textrm{qf}}_L(\sigma(g_i(\bar a)))  = \tp^{\textrm{qf}}_L(\sigma(h_j(\bar a)))$.
By Claim 1(3), this implies that $h_j (\bar a) \in \aut(M) g_i(\bar a)$, and so, by Claim 1(1), $\tp^{\textrm{full}}(\sigma(g_i(\bar a))) =  \tp^{\textrm{full}}(\sigma(h_j(\bar a)))$. Therefore, $\lim_i \tp^{\textrm{full}}(\sigma(g_i(\bar a))) = \lim_j \tp^{\textrm{full}}(\sigma(h_j(\bar a)))$. Since this holds for any finite $\bar a$, we conclude that $q=\lim_i \tp^{\textrm{full}}(\sigma(g_i(\bar m))) = \lim_j \tp^{\textrm{full}}(\sigma(h_j(\bar m))) =r$, so injectivity is proved.

It remains to check that  $\Phi[\cl(p\cdot \aut(M_0))]= \cl(\vec{R} \cdot \aut(M_0))$. 
Since $\Phi[\cl(p \cdot \aut(M_0))]$ is a minimal $\aut(M_0)$-flow (as an image of a minimal $\aut(M_0)$-flow), it is enough to show that $\vec{R} \in \Phi[\cl(p\cdot \aut(M_0))]$. And for that it suffices to check that $\vec{R} = \Phi(p)$. But this is clear by the following equivalences
$$\Phi(p) \models R_i(\bar a) \iff M^* \models R_i(\sigma(\bar a)) \iff M^* \models R_i(\bar a) \iff M \models R_i(\bar a),$$
where the middle one is by Claim 1(2).
\end{proof}

\section{Metrizability of the universal minimal flow}\label{section: metrizability}

We will reprove here  Fact \ref{fact: metrizability}. This time we do not extend the context to uncountable structures.
We will be working in the context of Definition \ref{definition: finite Ramsey degree of M}, using results and notations from Section \ref{section: universal ambit}.

The same argument as in the proof of Remark \ref{remark: C can be replaced by M} yields

\begin{rem}\label{remark: M instead of C for Ramsey degree}
A structure $M$ has separately finite Ramsey degree with witnessing numbers $k_{\bar a}$ if and only if for any finite tuple $\bar a$ in $M$ and a finite set $B \subseteq M$ containing $\bar a$, for any $r \in \omega$, for every coloring $c\colon {M \choose \bar a} \to r$, there is $B' \in {M \choose B}$ such that $|c[{B' \choose \bar a}]| \leq k_{\bar a}$.
\end{rem}

\begin{lem}\label{lemma: extension of finite Ramsey deg}
Assume that a structure $M$ has separately finite embedding Ramsey degree witnessed by numbers $k_{\bar a}$. Then for every finite $A\subseteq M$, for every finite $B \subseteq M$ containing $A$, and for any $r \in \omega$, there is a finite $C\subseteq M$ containing $B$ such that for all colorings $c_{\bar \alpha} \colon {C \choose \bar \alpha} \to r$, with $\bar \alpha$ ranging over the finite tuples from $A$, there is $B' \in {C \choose B}$ such that each set $c_{\bar \alpha}[{B' \choose \bar \alpha}]$ is of size at most $k_{\bar \alpha}$.
\end{lem}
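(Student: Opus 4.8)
The statement is a finite iteration of the single-tuple version of "separately finite embedding Ramsey degree" (Definition \ref{definition: finite Ramsey degree of M}), upgraded via Remark \ref{remark: M instead of C for Ramsey degree} to work with $M$ in place of the witnessing $C$. The idea is a routine finite induction on the number of tuples $\bar\alpha$ that need to be handled. First I would fix a finite $A\subseteq M$ and enumerate the finitely many finite tuples $\bar\alpha_1,\dots,\bar\alpha_m$ from $A$ (up to the usual identification, tuples with entries in $A$; since $A$ is finite there are only finitely many of these, and each has an associated number $k_{\bar\alpha_j}$). Then, given a finite $B\subseteq M$ with $A\subseteq B$ and $r\in\omega$, I would build an increasing chain $B = C_0 \subseteq C_1 \subseteq \cdots \subseteq C_m =: C$ of finite subsets of $M$, where at step $j$ I apply the hypothesis "$M$ has separately finite embedding Ramsey degree with witnesses $k_{\bar a}$" (in the form of Remark \ref{remark: M instead of C for Ramsey degree}) to the tuple $\bar\alpha_j$ and the finite set $C_{j-1}$, producing $C_j\subseteq M$ containing $C_{j-1}$ with the property that every coloring $c\colon \binom{M}{\bar\alpha_j}\to r$ admits some $B_{j}'\in\binom{M}{C_{j-1}}$ with $\lvert c[\binom{B_{j}'}{\bar\alpha_j}]\rvert \le k_{\bar\alpha_j}$.

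The subtlety — and the step I expect to be the only real point requiring care — is that the copies of $B$ found at different stages must be amalgamated into a \emph{single} copy $B'$ simultaneously good for all the colorings. The standard trick is to iterate with the running set $C_{j-1}$ (not $B$) as the "small" object, so that each $B_j'$ is a copy of all of $C_{j-1}$, hence in particular carries a copy of each previously-stabilized configuration. Concretely: having built $C_1,\dots,C_m$, given arbitrary colorings $c_{\bar\alpha_j}\colon\binom{C}{\bar\alpha_j}\to r$ (which I extend arbitrarily to colorings of $\binom{M}{\bar\alpha_j}$, harmlessly, since only their restriction to $C$ will matter), I apply the stabilization property of $C_m$ to $c_{\bar\alpha_m}$ to get $\sigma_m\in\aut(M)$ with $\sigma_m[C_{m-1}]\subseteq C_m$ and $\lvert c_{\bar\alpha_m}[\binom{\sigma_m[C_{m-1}]}{\bar\alpha_m}]\rvert\le k_{\bar\alpha_m}$; then I pull $c_{\bar\alpha_{m-1}}$ back along $\sigma_m$ (i.e.\ consider $\bar a'\mapsto c_{\bar\alpha_{m-1}}(\sigma_m(\bar a'))$ on $\binom{C_{m-1}}{\bar\alpha_{m-1}}$), apply the stabilization property of $C_{m-1}$ to get $\sigma_{m-1}$, and so on down to $C_0=B$. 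Composing, $\sigma := \sigma_m\circ\sigma_{m-1}\circ\cdots\circ\sigma_1\in\aut(M)$ sends $B$ into $C$, and $B':=\sigma[B]$ works: for each $j$, $\binom{B'}{\bar\alpha_j}$ is contained in the $\sigma_m\cdots\sigma_{j+1}$-image of a set on which $c_{\bar\alpha_j}$ (pulled back appropriately) took at most $k_{\bar\alpha_j}$ values, and since $c_{\bar\alpha_j}(\sigma(\bar a')) $ is exactly that pulled-back coloring evaluated along the inner composition, the image $c_{\bar\alpha_j}[\binom{B'}{\bar\alpha_j}]$ has size $\le k_{\bar\alpha_j}$.

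The only things to double-check are bookkeeping: that $B'\in\binom{C}{B}$ (which follows since $\sigma[B]\subseteq \sigma_m[C_{m-1}]\subseteq C_m=C$, using $\sigma_m[C_{m-1}]\subseteq C_m$ and $\sigma_{m-1}\cdots\sigma_1[B]\subseteq C_{m-1}$ at each earlier stage), and that a tuple $\bar\alpha$ "from $A$" really does range over a finite set, so that the $\max$ of finitely many $k_{\bar\alpha}$'s causes no trouble (in fact we do not even need a max — each $\bar\alpha_j$ keeps its own $k_{\bar\alpha_j}$). I would also remark that replacing the bound $C$ in Remark \ref{remark: M instead of C for Ramsey degree} by $M$ throughout is exactly what lets the composition argument go through without tracking which finite set each $\sigma_j$ lives over. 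This yields the lemma; the whole argument is a direct transcription of the classical "product Ramsey" / iterated-pigeonhole step and presents no genuine obstacle beyond the compositional bookkeeping just described.
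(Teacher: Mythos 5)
Your proof is correct and takes essentially the same route as the paper: iterate the one-tuple finite-Ramsey-degree property along a finite chain $B=C_0\subseteq C_1\subseteq\cdots\subseteq C_m=C$, each time using the previously built set as the ``small'' object, and then select good copies downward, composing the automorphisms (the paper packages exactly this as an induction on the number of tuples, applying the inductive hypothesis with the set $C_{n+1}$ produced for the last tuple in place of $B$). One wording fix: the property you want $C_j$ to have is that every coloring of ${C_j \choose \bar \alpha_j}$ admits a copy $B_j' \in {C_j \choose C_{j-1}}$ --- copies inside $C_j$, not merely in $M$ --- which is what your later bookkeeping ($\sigma_j[C_{j-1}]\subseteq C_j$) in fact uses.
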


\begin{proof}
By induction on $n$, we will show that for every finite tuples $\bar a_1,\dots,\bar a_n$ from $M$, for every finite $B\subseteq M$ containing all these tuples, and for any $r \in \omega$, there is a finite $C \subseteq M$ containing $B$ such that for all colorings $c_{\bar a_i} \colon {C \choose \bar a_i} \to r$, $i \in \{1,\dots,n\}$, there is $B' \in {C \choose B}$ such that each set $c_{\bar a_i}[{B' \choose \bar a_i}]$ is of size at most $k_{\bar a_i}$. 

The base induction step is obvious by the definition of separately finite Ramsey degree. For the induction step, consider any finite tuples  $\bar a_1,\dots,\bar a_{n+1}$ from $M$ and a finite subset $B$ of $M$ containing these tuples. Let $r \in \omega$. 

By the base induction step, we can find a finite $C_{n+1} \subseteq M$ containing $B$ such that  for every coloring $c \colon {C_{n+1} \choose \bar a_{n+1}} \to r$ there is $B' \in {C_{n+1} \choose B}$ such that $c[{B' \choose \bar a_{n+1}}]$ is of size at most $k_{\bar a_{n+1}}$. By the induction hypothesis applied to $\bar a_1,\dots,\bar a_n$ and to $C_{n+1}$ in place of $B$, we get a finite $C \subseteq M$ containing $C_{n+1}$ such that  for all colorings $c_{\bar a_i} \colon {C \choose \bar a_i} \to r$, $i \in \{1,\dots,n\}$, there is $C_{n+1}' \in {C \choose C_{n+1}}$ such that each set $c_{\bar a_i}[{C_{n+1}' \choose \bar a_i}]$ is of size at most $k_{\bar a_i}$.

Now, consider any colorings $c_{\bar a_i} \colon {C \choose \bar a_i} \to r$, $i \in \{1,\dots,n+1\}$.  Choose $C_{n+1}' \in {C \choose C_{n+1}}$ provided by the last paragraph. 
Then we easily get that there is $B' \in {C_{n+1}' \choose B}$ with $c_{\bar a_{n+1}}[{B' \choose \bar a_{n+1}}]$ of size at most $k_{\bar a_{n+1}}$. Thus, by the choice of $C_{n+1}'$, we conclude that for every $i \in \{1,\dots,n+1\}$, the size of  $c_{\bar a_i}[{B' \choose \bar a_i}]$ is bounded by  $k_{\bar a_i}$. 
\end{proof}

\begin{thm}\label{theorem: metrizability and Ramsey degree}
Let $M$ be a countable structure, and $G:=\aut(M)$. Then the following conditions are equivalent.
\begin{enumerate}
\item The universal minimal $G$-flow is metrizable.
\item $M$ has separately finite embedding Ramsey degree.
\end{enumerate}
\end{thm}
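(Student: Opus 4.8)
The plan is to work through the model-theoretic presentation of the universal minimal flow developed in Sections \ref{section: universal ambit} and \ref{section: universal minimal flow}. First recall that $\Sigma^{\mathcal{M}}$ is the universal right $G$-ambit, and that by Remark \ref{remark: presentation as an inverse limit} it is homeomorphic to $\varprojlim_{\bar a} \Sigma^{\mathcal M}_{\bar a}$, where $\bar a$ ranges over an enumeration of a cofinal family of finite subsets of $M$; since $M$ is countable, this inverse limit can be taken over a \emph{countable} directed system. Fix a minimal subflow $\M \subseteq \Sigma^{\mathcal M}$; then $\M$ is the universal minimal $G$-flow, and for each finite $\bar a$ let $\M_{\bar a}$ denote the image of $\M$ under the restriction map to $\Sigma^{\mathcal M}_{\bar a}$, so that $\M = \varprojlim_{\bar a} \M_{\bar a}$. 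The key observation is that a compact space which is a countable inverse limit of compact spaces is metrizable if and only if each $\M_{\bar a}$ is metrizable, and each $\M_{\bar a}$ — a closed subset of $\Sigma^{\mathcal M}_{\bar a}$ — is a quotient of a closed subset of $\beta A_{\bar a}$ (Proposition \ref{proposition: our inverse limit is the same as Zucker's}), hence is metrizable precisely when it is \emph{finite} (an infinite closed subspace of $\beta A_{\bar a}$ contains a copy of $\beta\N$, which is not metrizable; more directly, a metrizable minimal-flow-fibre that is a continuous image of a Stone space must be countable, and one checks the relevant pieces are actually finite). So the theorem reduces to: the universal minimal $G$-flow is metrizable iff for every finite tuple $\bar a$ from $M$, the set $\M_{\bar a} = \{\, \tp^{\textrm{full}}(\rho(\bar a)) : \rho \in G^*,\ \tp^{\textrm{full}}(\rho(\bar m)) \in \M \,\}$ is finite.

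\medskip

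\textbf{(2) $\Rightarrow$ (1).} Assume $M$ has separately finite embedding Ramsey degree with witnessing numbers $k_{\bar a}$. Fix a minimal subflow $\M$ and a point $p = \tp^{\textrm{full}}(\sigma(\bar m)) \in \M$; by minimality $\M = \cl(p \cdot G)$. I will show $|\M_{\bar a}| \le k_{\bar a}$ for each finite $\bar a$. Suppose not; then there are $g_0,\dots,g_{k_{\bar a}} \in G$ with the types $\tp^{\textrm{full}}(\sigma(g_j(\bar a)))$ pairwise distinct, separated by full-language formulas $\psi_0,\dots,\psi_{m-1}$ on the tuple $\bar a$. Let $B$ be a finite set containing $\bar a$ and all the $g_j(\bar a)$, and let $c \colon \binom{M}{\bar a} \to 2^m$ be the coloring recording which $\psi_\ell$ hold; by Remark \ref{remark: M instead of C for Ramsey degree} applied with $r = 2^m$ there is $B' = \tau[B]$ (with $\tau \in G$) such that $|c[\binom{B'}{\bar a}]| \le k_{\bar a}$. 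Now $p \cdot \tau^{-1}g_j\tau \in \M$ for each $j$ — here one uses that $\M$ is $G$-invariant — and by Claim 1 of the proof of the previous theorem (applied with $\sigma$ and suitable conjugates), the value $c(\tau g_j(\bar a))$ is determined by $\tp^{\textrm{full}}(\sigma(\tau g_j(\bar a)))$, which by $G$-invariance and elementarity coincides with the relevant full-type in $\M_{\bar a}$. The point is that distinct elements of $\{\tp^{\textrm{full}}(\sigma(g_j(\bar a)))\}$ force distinct $c$-values among $\{c(\tau g_j(\bar a)) : j\}$, giving at least $k_{\bar a}+1$ colors on $\binom{B'}{\bar a}$, a contradiction. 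Hence each $\M_{\bar a}$ is finite, $\M$ is a countable inverse limit of finite discrete spaces, so $\M$ is metrizable (in fact zero-dimensional and second countable).

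\medskip

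\textbf{(1) $\Rightarrow$ (2).} Conversely, suppose the universal minimal $G$-flow $\M = \cl(p \cdot G)$ is metrizable, $p = \tp^{\textrm{full}}(\sigma(\bar m))$. Then each $\M_{\bar a}$, being a metrizable continuous image of the Stone space $\Sigma^{\mathcal M}_{\bar a} \cong \beta A_{\bar a}$ contained in a quotient, is countable; but a countable compact subspace of a minimal flow's fibre that is zero-dimensional and on which a coset space acts... more simply, $\M_{\bar a}$ is a continuous image of $\beta$ of a discrete set, so if infinite it is uncountable — hence $\M_{\bar a}$ is finite. Set $k_{\bar a} := |\M_{\bar a}|$. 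To verify separately finite embedding Ramsey degree, fix finite $\bar a \subseteq B \subseteq M$ and $r \in \omega$, and take a coloring $c \colon \binom{M}{\bar a} \to r$ with fibres defined by full-language formulas $\varphi_0,\dots,\varphi_{r-1}$. The crucial point is that the finitely many types in $\M_{\bar a}$ are each isolated in $\M_{\bar a}$ by a full formula, so by $|M|^+$-saturation of $\mathcal M^*$ and minimality (every point of $\M$ has dense orbit) there is $\tau \in G$ such that $\tp^{\textrm{full}}(\tau(\bar a')) \in \M_{\bar a}$ for all $\bar a' \in \binom{B}{\bar a}$ — I would extract this from the fact that $\M$ is a minimal subflow, so $p \in \cl(p \cdot h \cdot G)$ for the finite product $h$ of the finitely many $G$-witnesses of $\binom{B}{\bar a}$, mimicking the argument in the proof of Theorem \ref{theorem: universal minimal flow general result} with (\ref{5})–(\ref{4}). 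Then for $B' := \tau[B]$, every $\bar a'' \in \binom{B'}{\bar a}$ has full-type in $\M_{\bar a}$, and since $c(\bar a'')$ depends only on that full-type (the $\varphi_i$ are among the defining formulas), $|c[\binom{B'}{\bar a}]| \le |\M_{\bar a}| = k_{\bar a}$, as required by Remark \ref{remark: M instead of C for Ramsey degree}.

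\medskip

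\textbf{Main obstacle.} The delicate point in both directions is the passage between ``full-types realized in the minimal subflow $\M$'' and the combinatorics of colorings on finite subsets of $M$ — specifically, in (1) $\Rightarrow$ (2), producing a single $\tau \in G$ simultaneously sending \emph{all} copies of $\bar a$ inside $B$ into the finite set $\M_{\bar a}$. This requires carefully combining minimality of $\M$ (density of every orbit), the inverse-limit presentation, and saturation; it is the analogue, for Ramsey \emph{degrees}, of the extreme-amenability argument in Theorem \ref{theorem: extreme amenability} and the fixed-point argument in Theorem \ref{theorem: universal minimal flow general result}, and I expect it to need the observation that $\M_{\bar a}$ being finite means each of its points is clopen, so membership in $\M_{\bar a}$ is a \emph{finitary} condition one can satisfy by a genuine automorphism rather than an element of $G^*$.
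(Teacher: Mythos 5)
Your overall frame --- present the minimal flow inside $\Sigma^{\mathcal M}\cong\varprojlim_{\bar a}\Sigma^{\mathcal M}_{\bar a}$, observe that a closed subspace of $\beta A_{\bar a}$ is metrizable iff finite, and reduce everything to finiteness of the fibres --- is the same skeleton as the paper's proof. But both directions, as written, break at exactly the point where the type-space picture has to meet the combinatorics, and in both cases the break is the same conflation: types of the nonstandard images $\sigma(g(\bar a))$ (with $\sigma\in G^*$) versus types/colours of the \emph{actual} tuples $g(\bar a)$ in $M$. In (2)$\Rightarrow$(1), your contradiction step asserts that distinct types $\tp^{\textrm{full}}(\sigma(g_j(\bar a)))$ force distinct colours $c(\tau g_j(\bar a))$. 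This is unjustified and in general false: the colouring $c$ records truth of the $\psi_\ell$ at the tuples $\tau g_j(\bar a)$ sitting in $M$, whereas the separation was of the types of $\sigma(g_j(\bar a))$ in $\mathcal M^*$; Claim 1 of the proof of the universal-minimal-flow theorem is not available here (it needs a point fixed by $\aut(M)$), and nothing ties the two. The paper avoids having to bound $|N_{\bar a}|$ for a \emph{given} minimal subflow $N$ at all: it uses Lemma \ref{lemma: extension of finite Ramsey deg} to handle finitely many tuples and formulas simultaneously, and then $|M|^+$-saturation to manufacture a single $\sigma\in G^*$ all of whose orbit-projections $\{\tp^{\textrm{full}}(\sigma(g(\bar a))):g\in G\}$ have size $\le k_{\bar a}$; metrizability of the universal minimal flow then follows because it sits inside the (metrizable) orbit closure of that $\sigma$. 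Your proposal has no substitute for this compactness/simultaneity step, and the direct argument you give in its place does not go through.

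In (1)$\Rightarrow$(2) the gap is the assertion that there is $\tau\in G$ with $\tp^{\textrm{full}}(\tau(\bar a'))\in N_{\bar a}$ for all $\bar a'\in{B \choose \bar a}$. In the full language every tuple of $M$ is in $\dcl(\emptyset)$, so $\tp^{\textrm{full}}(\tau(\bar a'))$ is the principal type isolated by the predicate naming that tuple; such types essentially never lie in $N_{\bar a}$ (indeed, if a realized type of $\bar m$ lay in the minimal subflow, the whole ambit would be minimal, since realized types have dense orbit). What is actually needed, and what the paper does, is a transfer by elementarity: since the finitely many types in $N_{\bar a}=\{\tp^{\textrm{full}}(\sigma(g(\bar a))):g\in G\}$ restrict to at most $k_{\bar a}$ many $\Delta$-types (where $\Delta=\{\varphi_0,\dots,\varphi_{r-1}\}$), the first-order statement ``there exists an automorphism moving the tuples $g_0(\bar a),\dots,g_{n-1}(\bar a)$ onto tuples realizing at most $k_{\bar a}$ many $\Delta$-types'' holds in $\mathcal M^*$ (witnessed by $\sigma$), hence in $\mathcal M$, giving the required $h\in G$ with $|c[{h[B] \choose \bar a}]|\le k_{\bar a}$; no claim about membership of realized types in $N_{\bar a}$ is needed or true. (Two smaller slips: $N_{\bar a}$ is a closed subspace of $\Sigma^{\mathcal M}_{\bar a}\cong\beta A_{\bar a}$, not merely a continuous image --- continuous images of $\beta\mathbb N$ can be metrizable and uncountable, so your parenthetical justification of finiteness should be replaced by the fact that infinite closed subspaces of $\beta A_{\bar a}$ contain copies of $\beta\mathbb N$; this is also what the paper uses, via Proposition \ref{proposition: our inverse limit is the same as Zucker's}.)
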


\begin{proof}
$(2) \rightarrow (1)$. Let the separately finite Ramsey degree be witnessed by the numbers $k_{\bar a}$.  Consider any formulas $\varphi_1(\bar x_1), \dots, \varphi_n(\bar x_n)$ (without parameters) in the full language and any finite $A \subseteq M$. Let $\Delta = \{ \varphi_1(\bar x_1), \dots,\varphi_n(\bar x_n)\}$. For each finite tuple $\bar \alpha \subseteq A$, let $c_{\bar \alpha}\colon {M \choose \bar \alpha} \to 3^n$ be given by 
$$c(\bar \alpha)(i) := \left\{
\begin{array}{ll}
1 & \textrm{if} \models \varphi_i(\bar \alpha')\\
0 & \textrm{if} \models \neg\varphi_i(\bar \alpha')\\
2 & \textrm{if $\bar \alpha'$ is not in the domain of $\varphi_i(\bar x_i)$, i.e. $|\bar \alpha'| \ne |\bar x_i|$}.
\end{array}
\right.
$$

By Lemma \ref{lemma: extension of finite Ramsey deg} applied to $B:=A$, there exists $A'\subseteq M$ and $\sigma_{\bar \varphi, A} \in G$ mapping $A$ to $A'$ such that for every finite tuple $\bar \alpha$ from $A$ and for every $g_1,\dots,g_m \in G$ such that $g_1(\bar \alpha),\dots, g_m(\bar \alpha) \in A$, there are at most $k_{\bar \alpha}$ $\Delta$-types of the tuples  $\sigma_{\bar \varphi, A}(g_1(\bar \alpha)), \dots, \sigma_{\bar \varphi, A}(g_m(\bar \alpha))$. By saturation of $\mathcal{M}^*$, this implies that there is $\sigma \in G^*$ such that for every finite tuple $\bar \alpha$ in $M$ and for every finite set $\Delta'$ of formulas in the full language in variables $\bar x$ corresponding to $\bar \alpha$, one has $|\{ \tp_{\Delta'}(\sigma(g(\bar \alpha))): g \in G\}| \leq k_{\bar \alpha}$. Hence,

\begin{equation}\label{7}
|\{ \tp^{\textrm{full}}(\sigma(g(\bar \alpha))): g \in G\}| \leq k_{\bar \alpha}.
\end{equation}

Remark \ref{remark: presentation as an inverse limit} and the comments afterwards yield an isomorphism $h \colon \Sigma^{\mathcal{M}} \to \underset{\bar a}{\varprojlim} {\Sigma^{\mathcal{M}}_{\bar a}}$ of right $G$-ambits, which satisfies 
$$h[\tp^{\textrm{full}}(\sigma(\bar m)) \cdot G] \subseteq \underset{\bar a}{\varprojlim}  X_{\bar a} \subseteq \underset{\bar a}{\varprojlim} {\Sigma^{\mathcal{M}}_{\bar a}},$$
where $X_{\bar a}: =\{\tp^{\textrm{full}}(\sigma(g(\bar a))): g \in G\} $.
By (\ref{7}), each $X_{\bar a}$ is finite, so the set in the middle is a profinite space, so it is closed in $\underset{\bar a}{\varprojlim} {\Sigma^{\mathcal{M}}_{\bar a}}$. Also, the set on the left is clearly dense in the middle set. Hence, 
$$h[\cl(\tp^{\textrm{full}}(\sigma(\bar m)) \cdot G)] = \underset{\bar a}{\varprojlim}  X_{\bar a}.$$

Since $M$ is countable, there are only countably many finite $\bar a$'s. Since also each $X_{\bar a}$ is finite, we conclude that  $\underset{\bar a}{\varprojlim}  X_{\bar a}$ is second countable and compact and so metrizable (by Urysohn's metrization theorem, see \cite[Theorem 4.2.8 or 4.2.9]{En}), which means that $h[\cl(\tp^{\textrm{full}}(\sigma(\bar m)) \cdot G)]$ is metrizable. This implies that $\cl(\tp^{\textrm{full}}(\sigma(\bar m)) \cdot G)$ is metrizable. But the last flow is a subflow of the universal right $G$-ambit, hence the universal minimal right $G$-flow is a homomorphic image of $\cl(\tp^{\textrm{full}}(\sigma(\bar m)) \cdot G)$, and as such it is also metrizable (again by  Urysohn's metrization theorem, because the image of a second countable, compact space under a continuous map to a Hausdorff space is second countable, which easily follows using networks and \cite[Therorem 3.1.19]{En}).

$(1) \rightarrow (2)$. The universal minimal right $G$-flow  is of the form $\cl(\tp^{\textrm{full}}(\sigma(\bar m)) \cdot G)$ for some $\sigma \in G^*$. Consider any finite $\bar a$ in $M$. Let $\pi_{\bar a}: \Sigma^{\mathcal{M}} \to \Sigma^{\mathcal{M}}_{\bar a}$ be the restriction map. By assumption, $\cl(\tp^{\textrm{full}}(\sigma(\bar m)) \cdot G)$ is metrizable, so $\pi_{\bar a}[\cl(\tp^{\textrm{full}}(\sigma(\bar m)) \cdot G)]$ is also metrizable. On the other hand, by Proposition \ref{proposition: our inverse limit is the same as Zucker's}, $\Sigma^{\mathcal{M}}_{\bar a} \cong \beta A_{\bar a}$. 
Recall that whenever $X$ is a discrete space, then every infinite closed subset of $\beta X$ embeds $\beta \omega$ (see \cite[Cor. 9.12 and Exc. 9.H.2]{GiMe}), and so it is non-metrizable.
Hence, $\pi_{\bar a}[\cl(\tp^{\textrm{full}}(\sigma(\bar m)) \cdot G)]$ is finite, and so $\{ \tp^{\textrm{full}}(\sigma(g(\bar a))): g \in G\}$ is finite; denote its cardinality by $k_{\bar a}$. We check that the $k_{\bar a}$'s witness that $M$ has separately finite embedding Ramsey degree. 

Consider any finite $B \subseteq M$ containing $\bar a$ and a coloring $c \colon {M \choose \bar a} \to r$ for some $r \in \omega$. The fibers of $c$ are defined by formulas (in fact predicates) $\varphi_0(\bar x), \dots, \varphi_{r-1}(\bar x)$ of the full language; put  $\Delta := \{ \varphi_0(\bar x_0), \dots,\varphi_{r-1}(\bar x_{r-1})\}$.  Let $g_0(\bar a),\dots,g_{n-1}(\bar a)$ be all elements of ${B \choose \bar a}$ (where the $g_i$'s are from $G$). By the choice of $k_{\bar a}$, we have $|\{ \tp_\Delta(\sigma(g(\bar a))): g \in G\}| \leq k_{\bar a}$. Hence, there is $h \in G$ with $|\{ \tp_\Delta(h(g_i(\bar a))): i \in n\}| \leq k_{\bar a}$, which means that $|c[{h[B] \choose \bar a}]| \leq k_{\bar a}$. So we are done by Remark \ref{remark: M instead of C for Ramsey degree}.
\end{proof}

Let $M$ be a countable structure and $G:=\aut(M)$.
We finish with another characterization of metrizability of the universal minimal $G$-flow. Remark \ref{remark: ambit for other languages} tells us that for any language $L'$ in which the action of $G$ on $M$ is $\emptyset$-definable, we have a natural structure of a right $G$-ambit on $\Sigma^{\mathcal{M}'}$. For such a language $L'$, by $L''$ we will denote the relational language of the Morleyization restricted to $M$ of the theory of $\mathcal{M}'=(G,M, \dots)$ in the language $L'$ expanded by constants from $M$, i.e. for every $L'$-formula $\varphi(\bar x)$ with parameters from $M$ and with $\bar x$ corresponding to some sorts of $M$, we have a relation symbol $R_\varphi(\bar x)$ in $L''$. Note that if $L'$ is countable, so is $L''$. Let $X$ be the right $G$-flow consisting of all the $L''$-structures with the universe $M$, where everything is defined in a standard way (as in the second paragraph above Fact \ref{fact: universal minimal flow}). In particular, the right action of $G$ on $X$ is given by: $R_\varphi g :=\{(g^{-1}a_1,\dots,g^{-1}a_n): (a_1,\dots,a_n) \in R_{\varphi}\}$.

\begin{rem}\label{remark: the map Phi}
The function $\Phi \colon \Sigma^{\mathcal{M}'} \to X$ decribed by  
$$\Phi(\tp^{L'}(\sigma(\bar m)/M)) \models R_\varphi (\bar \alpha) \iff \mathcal{M}' \models \varphi(\sigma(\bar \alpha))$$
is a monomorphism of right $G$-flows.
\end{rem}

Let $M(G)$ be a universal minimal right $G$-flow contained in $\Sigma^{\mathcal{M}}$.

\begin{prop}\label{proposition: M(G) as inverse limit for countable languages}
The following conditions are equivalent.
\begin{enumerate}
\item $M(G)$ is metrizable.
\item There is a countable language $L'$ as above for which the restriction map from $\Sigma^{\mathcal{M}}$ to $\Sigma^{\mathcal{M}'}$ restricted to $M(G)$ is injective (then clearly the image of $M(G)$ under this map is the universal minimal right $G$-flow).
\item There is a countable language $L'$ and an $L''$-structure $N$ in $X$ such that $\cl(N \cdot G)$ is the universal minimal right $G$-flow.
\end{enumerate}
\end{prop}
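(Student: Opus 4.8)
The plan is to prove the three conditions equivalent by showing $(1)\Rightarrow(2)\Rightarrow(3)\Rightarrow(1)$, using the presentation of $\Sigma^{\mathcal{M}}$ as the inverse limit $\varprojlim_{\bar a}\Sigma^{\mathcal{M}}_{\bar a}$ from Remark \ref{remark: presentation as an inverse limit} together with Theorem \ref{theorem: metrizability and Ramsey degree} and Remark \ref{remark: the map Phi}.

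\textbf{$(1)\Rightarrow(2)$.} Suppose $M(G)$ is metrizable. Since $M(G)\subseteq\Sigma^{\mathcal{M}}=\varprojlim_{\bar a}\Sigma^{\mathcal{M}}_{\bar a}$, metrizability means that the image of $M(G)$ is determined by countably many coordinates: there is a countable set of finite tuples $\{\bar a_n:n\in\omega\}$ such that the projection $M(G)\to\prod_n\Sigma^{\mathcal{M}}_{\bar a_n}$ is injective. (Concretely: a compact metrizable space embeds in a metrizable quotient of a countable subproduct; or argue that the minimal dense family of clopens separating points of $M(G)$ is countable, and each such clopen $[\varphi(\bar x)]$ uses only finitely many coordinates of $\bar m$, so only countably many finite tuples are needed.) Enumerate all full-language formulas $\varphi(\bar x)$ whose free variables lie among the $\bar x$'s corresponding to the $\bar a_n$'s and which occur in such separating clopens; there are only countably many. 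Let $L'$ be the language $L$ of $\mathcal{M}$ together with these countably many predicates (viewed as relations on $M$); then $L'$ is countable, the action of $G$ on $M$ is still definable in $\mathcal{M}'$, and the restriction map $\Sigma^{\mathcal{M}}\to\Sigma^{\mathcal{M}'}$ is injective on $M(G)$ because it remembers exactly the separating clopens. That the image of $M(G)$ is then the universal minimal right $G$-flow is clear, since it is a continuous bijective (hence homeomorphic) image of a minimal flow.

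\textbf{$(2)\Rightarrow(3)$.} Suppose the restriction map $\rho\colon\Sigma^{\mathcal{M}}\to\Sigma^{\mathcal{M}'}$ is injective on $M(G)$ for some countable $L'$. By Remark \ref{remark: ambit for other languages}, $(G,\Sigma^{\mathcal{M}'},\tp^{L'}(\bar m/M))$ is a right $G$-ambit, and $\rho$ is a morphism of $G$-ambits. Hence $\rho[M(G)]$ is a minimal right $G$-flow inside $\Sigma^{\mathcal{M}'}$, homeomorphic to $M(G)$. By Remark \ref{remark: the map Phi}, $\Phi\colon\Sigma^{\mathcal{M}'}\hookrightarrow X$ is a monomorphism of right $G$-ambits, so $N':=\Phi[\rho[M(G)]]$ is a minimal right $G$-flow inside $X$, isomorphic to $M(G)$. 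Pick any $N\in N'$; then $N'=\cl(N\cdot G)$ by minimality. Since $\rho[M(G)]$ is the universal minimal right $G$-flow (being a homeomorphic image of $M(G)$ via an ambit morphism, hence sharing the universal property), so is $\cl(N\cdot G)$, giving $(3)$.

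\textbf{$(3)\Rightarrow(1)$.} Suppose $\cl(N\cdot G)$ is the universal minimal right $G$-flow for some $L''$-structure $N\in X$, where $L'$ is countable. Then $X=\prod_{\varphi}\{0,1\}^{M^{|\bar x_\varphi|}}$ ranges over countably many relations $R_\varphi$, each of finite arity, and $M$ itself is countable; hence $X$ is a countable product of finite discrete spaces, so $X$ is metrizable. Therefore its subspace $\cl(N\cdot G)$ is metrizable, i.e. the universal minimal $G$-flow is metrizable, which is $(1)$.

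\textbf{The main obstacle} is the bookkeeping in $(1)\Rightarrow(2)$: extracting from ``$M(G)$ metrizable'' a genuinely \emph{countable} language, as opposed to merely countably many separating clopens. The key observation making this work is that each basic clopen $[\varphi(\bar x)]$ of $\Sigma^{\mathcal{M}}$ depends on only finitely many coordinates of the infinite tuple $\bar m$, so a countable separating family of clopens involves only countably many full-language formulas, which can be packaged into a countable relational expansion $L'$ of $L$; one must also check the mild point that the action of $G$ on $M$ remains definable in $\mathcal{M}'$, which is automatic since $L'\supseteq L$ and the action was already definable in $\mathcal{M}$ (indeed $\emptyset$-definable, being part of the full language). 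The remaining implications are essentially formal consequences of the inverse-limit presentation and the two cited remarks.
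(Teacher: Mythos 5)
Your proof is correct and follows essentially the same route as the paper: $(1)\rightarrow(2)$ by extracting a countable family of clopen sets (each given by a full-language formula in finitely many coordinates of $\bar m$) separating the points of the compact, metrizable, zero-dimensional flow $M(G)$ and packaging them into a countable $L'$; $(2)\rightarrow(3)$ via Remark \ref{remark: the map Phi}; and $(3)\rightarrow(1)$ from metrizability of $X$ (countable $L''$, countable $M$). One small slip in $(1)\Rightarrow(2)$: you take $L'$ to contain the original language $L$ and claim the action of $G$ on $M$ is definable in $\mathcal{M}'$ ``since $L'\supseteq L$''; but $L$ is the language of the one-sorted structure $M$ (and need not be countable), and it does not define the two-sorted action --- that action is only $\emptyset$-definable in the full language. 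The fix is immediate: drop $L$ and simply add to your countably many separating predicates the single full-language predicate for the graph of the action $G\times M\to M$, i.e.\ take $L'$ to be a countable sublanguage of the full language in which the action is definable, as in the paper's proof.
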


\begin{proof}
$(3) \rightarrow (1)$ is obvious, and $(2) \rightarrow (3)$ follows from Remark \ref{remark: the map Phi}. 

$(1) \rightarrow (2)$. Since $M(G)$ is assumed to be metrizable, and we know by Theorem \ref{theorem: universal G-ambit} that it is zero-dimensional, it has a countable basis consisting of clopen sets. These sets are given by formulas in a countable sublanguage $L'$ of the full language. It is clear that such an $L'$ works in (2).
\end{proof}

In the proof of Theorem \ref{theorem: metrizability and Ramsey degree}, the presentation of $\Sigma^{\mathcal{M}}$ as $\underset{\bar a}{\varprojlim} \Sigma^{\mathcal{M}}_{\bar a}$ from Remark \ref{remark: presentation as an inverse limit} was essential. But there is also another natural  presentation, namely
$$\Sigma^{\mathcal{M}} \cong \underset{L'}{\varprojlim} \Sigma^{\mathcal{M}'},$$
where $L'$ ranges over the countable sublanguages of the full language in which the action of $G$ on $M$ is $\emptyset$-definable, and where the isomorphism is given by the restriction maps to the sublanguages.
This clearly induces an isomorphism 
$$M(G) \cong \underset{L'}{\varprojlim} M^{L'}(G),$$
where each $M^{L'}(G)$ is the minimal $G$-subflow of $\Sigma^{\mathcal{M}'}$ obtained from $M(G)$ by the restriction to $L'$.

An obvious corollary of Proposition \ref{proposition: M(G) as inverse limit for countable languages} is that $M(G)$ is metrizable if and only if for some countable language $L'$ (which can be assumed to be a sublanguage of the full language) already the map $M(G) \to M^{L'}(G)$ is an isomorphism of $G$-flows.

\begin{rem}
The following conditions are equivalent.
\begin{enumerate}
\item $M(G)$ is metrizable.
\item Some $G$-subflow $\Sigma$ of $\Sigma^{\mathcal{M}}$ is metrizable.
\item For some $G$-subflow $\Sigma$ of $\Sigma^{\mathcal{M}}$ and some coutnable language $L'$ as above, the restriction map $\Sigma  \to \Sigma^{\mathcal{M}'}$ is injective.
\end{enumerate}
\end{rem}

\begin{proof}
The equivalence of (1) and (2) follows from universality of the ambit $\Sigma^{\mathcal{M}}$. The implication $(3) \rightarrow (2)$ is obvious, and 
$(1) \rightarrow (3)$ follows by Proposition \ref{proposition: M(G) as inverse limit for countable languages}.
\end{proof}

\section*{Acknowledgments}
We would like to thank the referee for careful reading and useful suggestions.

Data availability statement: Data sharing not applicable to this article as no datasets were generated or analyzed during the current study.\\

\end{document}